\renewcommand{\Tilde}{\widetilde}
\newcommand{\NN}{\mathbb{N}}     
\newcommand{\ZZ}{\mathbb{Z}}     
\newcommand{\RR}{\mathbb{R}}     
\newcommand{\CC}{\mathbb{C}}     
\newcommand{\cS}{\mathcal{S}}   
\newcommand{\cD}{\mathcal{D}}
\newcommand{\dt}{\frac{3}{2}}
\renewcommand{\epsilon}{\varepsilon}
\newcommand{\dd}{\,\mathrm{d}}
\newcommand{\ess}{\mathrm{ess}}
\DeclareMathOperator{\ran}{ran}
\DeclareMathOperator{\dom}{dom}
\DeclareMathOperator{\spec}{spec}
\newtheorem{theorem}{Theorem}
\newtheorem{prop}[theorem]{Proposition}
\newtheorem{lemma}[theorem]{Lemma}
\newtheorem{corol}[theorem]{Corollary}
\theoremstyle{definition}
\newtheorem{defin}[theorem]{Definition}
\newtheorem{remark}[theorem]{Remark}
\begin{document}

\title{\bf On Neumann-Poincar\'e operators and self-adjoint transmission problems}

\author{\large Badreddine Benhellal\,\dag{} \large and Konstantin Pankrashkin\,\ddag\\[\medskipamount]
	\small Carl von Ossietzky Universit\"at Oldenburg\\
	\small Fakult\"at V, Institut f\"ur Mathematik\\
	\small 26111 Oldenburg, Germany\\[\medskipamount]
	\small \dag{} E-Mail: \url{badreddine.benhellal@uol.de}\\[\medskipamount]
	\small \ddag{} ORCID: 0000-0003-1700-7295 \\
	\small E-Mail: \url{konstantin.pankrashkin@uol.de},\\
	\small Webpage: \url{http://uol.de/pankrashkin}
}

\date{}

\maketitle

%
%
%
%

\maketitle	

\begin{abstract}
We discuss the self-adjointness in $L^2$-setting of the operators acting as $-\nabla\cdot h\nabla$,
with piecewise constant functions $h$ having a jump along a Lipschitz hypersurface $\Sigma$, without
explicit assumptions on the sign of $h$. We establish a number of sufficient conditions for the self-adjointness
of the operator with $H^s$-regularity for suitable $s\in[1,\frac{3}{2}]$, in terms of the jump value and the regularity
and geometric properties of $\Sigma$. An important intermediate step is a link
with Fredholm properties of the Neumann-Poincar\'e operator on $\Sigma$, which is new
for the Lipschitz setting.
\end{abstract}

\thispagestyle{plain}
\pagestyle{plain}

\section{Introduction}

Let $n\ge 2$ and $\Omega\subset\RR^n$ be a non-empty bounded open set with Lipschitz boundary. 
Let $\Omega_-\subset\Omega$ be a non-empty open subset with Lipschitz boundary such that $\overline{\Omega_-}\subset\Omega$, and set 
\begin{align}
	\Omega_+=\Omega\setminus \overline{\Omega_-}, \quad \quad\Sigma=\partial\Omega_-,
\end{align}
see Fig.~\ref{fig1}. In order to avoid combinatorially involved configurations we assume from the very beginning that
\[
\Sigma \text{ is connected}.
\]

\begin{figure}
	\centering
	\includegraphics[height=35mm]{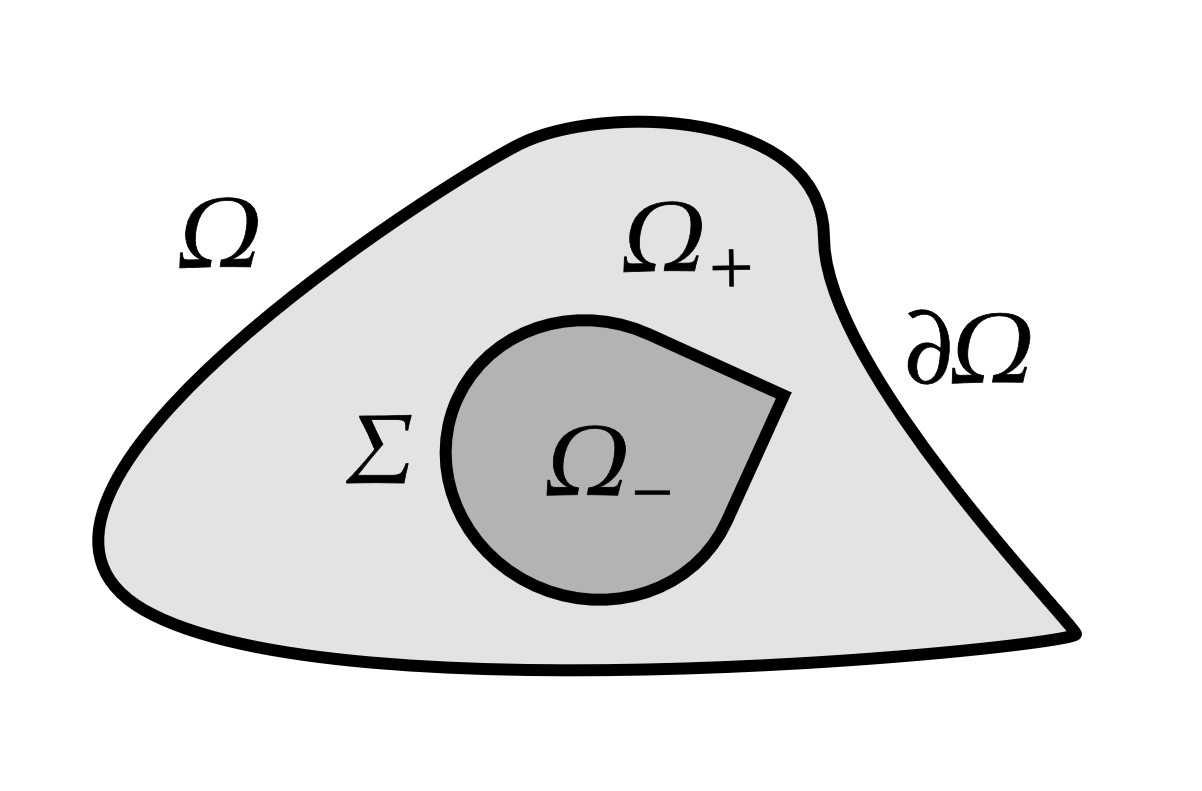}
	\caption{The sets $\Omega_\pm$ and $\Sigma$.}\label{fig1}
\end{figure}

In the present work, we will be interested in the self-adjointness of the operator formally acting as $u\mapsto -\nabla\cdot h\nabla u$
in $L^2(\Omega)$ with the Dirichlet boundary conditions at the exterior boundary $\partial\Omega$, where $h:\Omega\to \RR$ is the piecewise constant function
\[
h:\ \Omega\ni x\mapsto \begin{cases}
	1, & x\in \Omega_+,\\
	\mu, & x\in \Omega_-
\end{cases}
\]
and $\mu\in\RR\setminus\{0\}$ is a parameter. For $\mu>0$ the operator can be defined using Lax-Milgram representation theorem and the ellipticity theory,
and it is semibounded from below with compact resolvent, so we are mainly interested in the case $\mu<0$. Various equations with the above operators for $\mu<0$ appear in the mathematical theory of negative-index metamaterials, stemming from the pioneering works by Veselago~\cite{ves}, and they
show a number of unexpected properties compared to the usual elliptic case~\cite{bouch,kohn}. We also remark that there is an increasing interest to sign-changing problems in the homogenization theory and the uncertainty quantification \cite{waurick,nistor1,nistor2}.

In what follows, any function $u\in L^2(\Omega)$ will be identified with the pair $(u_+,u_-)$ with $u_\pm$ being its restrictions on $\Omega_\pm$. This gives rise to the idenfitication $L^2(\Omega)\simeq L^2(\Omega_+)\oplus L^2(\Omega_-)$. Then the above operator is expected to act,
at least at a naive level, as $(u_+,u_-)\mapsto (-\Delta u_+,-\mu \Delta u_-)$
on the functions $u_\pm$ should satisfying the boundary and transmission conditions
\begin{equation}
	\label{trans0}
u_+=0 \text{ on } \partial\Omega, 
\quad u_+=u_- \text{ on } \Sigma,
\quad \partial_+ u_++\mu\partial_- u_- \text{ on } \Sigma,
\end{equation}
with $\partial_\pm$ being the outward normal (with respect to $\Omega_\pm$) derivative on $\Sigma$. 
Finding precise regularity of the functions $u_\pm$ guaranteeing the self-adjointness
of the resulting operator is a non-trivial task depending on a combination of the parameter $\mu$ (usually called contrast) and the geometric properties of $\Sigma$, and this is the central topic of the present paper.

It seems that the problem was first addressed in \cite{bbdr} for the case when
both $\Omega_\pm$ are  $C^2$-smooth. Using the results of \cite{cost} it turned
out that the requirement $u_\pm\in H^1(\Omega_\pm)$ with $\Delta u_\pm\in L^2(\Omega_\pm)$ leads to a self-adjoint operator for all $\mu\ne -1$. On the other hand, if $n=2$ and $\Sigma$ has corners, then the operator is only self-adjoint if $\mu$ lies outside some non-trivial critical interval, see Remark \ref{rmk21} below. A similar picture was found for some boundary singularities in higher-dimensional situations as well, see \cite{bccj,bcc,acr}.

It should be noted that the regularity of functions in the operator domain turns out to be crucially important. For example, in the papers \cite{jd,cpp,kroum} it was found that for some values of $\mu$ the operator with $u_\pm\in H^2(\Omega_\pm)$ is not closed; its closure is a self-adjoint operator whose essential spectrum can be non-empty, which in parts explains various
regularization issues found in earlier works \cite{bouch,kohn,kett}. We remark that very
similar effects
were recently discovered in transmission problems for Dirac operators as well~\cite{BHSS,BHOP,BP}.

It seems that no previous work addressed the case of general Lipschitz $\Sigma$ so far, and we are trying to close this gap in the present paper. 
One should mention the existence of an approach based on modified representation theorems
for quadratic forms \cite{mat,mfo,ss1,ss2}, which allows for the study of variable coefficients, 
but the final results on the self-adjointness require higher regularity of the hypersurface $\Sigma$ or suitable symmetries of the sets $\Omega_\pm$.

For each $s\in[1,\frac{3}{2}]$ denote by $A_{(s)}$ the linear operator in $L^2(\Omega)$ acting as $(u_+,u_-)\mapsto (-\Delta u_+,-\mu\Delta u_-)$
on the functions $u_\pm\in H^s(\Omega_\pm)$ such that $\Delta u_\pm\in L^2(\Omega_\pm)$ and that
the boundary and transmission conditions \eqref{trans0} are satisfied (a precise definition
of the boundary values will be given in Sections \ref{secint} and \ref{sec3}).
Our main result is a link between the self-adjointness of $A_{(s)}$ and the spectral properties
of the so-called Neumann-Poincar\'e operator $K'_\Sigma$ on $\Sigma$, which is the singular integral operator defined by
\[
	K'_\Sigma f (x)=\mathrm{p.v.}\int_\Sigma \frac{\big\langle \nu(x),x-y\big\rangle}{\sigma_n|x-y|^n} f(y)\dd s(y),
\]
where $\nu$ is the unit normal field of $\Sigma$ pointing to $\Omega_+$ and $\mathrm{p.v.}$
stands for the Cauchy principal value. The Neumann-Poincar\'e operator appears naturally when considering the boundary values of layer potentials, see Section~\ref{secint}. Its spectral analysis attracted a lot of attention during the last decades \cite{ando,hag,km2,kmp1,Miy,MR}, and it already appeared in various problems related to the existence of solutions in sign-indefinite problems \cite{grieser,ng1,ng2,ola}. However, it seems that the links between the self-adjointness of $A_{(s)}$ and the spectrum of $K'_\Sigma$ were never addressed explicitly for the case of general Lipschitz $\Sigma$. Our main observation is as follows
(Corollary \ref{corself2}):
\[
\parbox{110mm}{
Let $\mu\in\RR\setminus\{-1,0\}$ and $s\in\{1,\frac{3}{2}\}$ be such that  $K'_\Sigma-\frac{\mu+1}{2(\mu-1)}$ viewed as an operator $H^{s-\frac{3}{2}}(\Sigma)\to H^{s-\frac{3}{2}}(\Sigma)$ is Fredholm with index zero, then $A_{(s)}$ is self-adjoint with compact resolvent.
}
\]
We remark that various results for more general values of $s$ are shown throughout the text as well, but they require a more involved formulation, see e.g. Theorem \ref{corself}. Using the above result, we establish several specific situations in which $A_{(s)}$ is self-adjoint with compact resolvent:
\begin{itemize}
	\item the sign-definite case $\mu>0$ and $s=\frac{3}{2}$ (Theorem \ref{thm17}),
	\item $s=\frac{3}{2}$ and for all $\mu\ne -1$, if $\nu$ has a vanishing mean oscillation, which covers all $C^1$-smooth $\Sigma$ (Theorem \ref{thm18}),
	\item $n=2$, $\Sigma$ is a curvilinear polygon with the sharpest corner $\omega\in(0,\pi)$,
	\begin{itemize} 
	\item[$\circ$] $s=\frac{3}{2}$,  if the edges of $\Sigma$ are $C^1$-smooth and
	\[
	\mu\notin \Big[-\cot^2\frac{\omega}{4},-\tan^2\frac{\omega}{4}\Big].
	\]
        \item[$\circ$] $s=1$, if the edges of $\Sigma$ are $C^2$-smooth and
	\[
	\mu\notin \Bigg[-\dfrac{\pi+|\pi-\omega|}{\pi-|\pi-\omega|}, -\dfrac{\pi-|\pi-\omega|}{\pi +|\pi-\omega|}\,\Bigg],
	\]
\end{itemize}
see Theorem \ref{thm20}.
\item $n=3$, $s=1$, $\Sigma$ is a surface of revolution which is $C^1$-smooth except at a single point at which
it has a conical singularity of opening angle $\alpha\in(0,\pi)\setminus\{\frac{\pi}{2}\}$, and
\begin{itemize}
	\item[$\circ$]  $\mu>-1$ for $\alpha\in(0,\frac{\pi}{2})$,
	\item[$\circ$]  $\mu<-1$ for $\alpha\in(\frac{\pi}{2},\pi)$,
\end{itemize}
see Theorem \ref{thmcon} for a detailed formulation.
\end{itemize}

As discussed in Remark \ref{rmk19} below, the required spectral bounds for $K'_\Sigma$ belong actually to the most important conjectures
in the theory of Neumann-Poincar\'e operators. Very few explicit results are available so far, which in part explains why the self-adjointness problem for the sign-indefinite case turned out to be unexpectedly difficult. We hope that the link between the two classes of problems we establish in this paper will draw the attention of new communities to the respective questions.

Our approach follows mainly the philosophy of quasi boundary triples for symmetric operators \cite{bl07,bl12},
which is enhanced by the very recent sharp trace theory on Lipschitz domains developed in \cite{bgm}.
Our main step is establishing a resolvent formula for the operator $A_{(s)}$ (Theorem \ref{propaa}).
Then the self-adjointness is reduced to proving the surjectivity of $A_{(s)}-z$ for suitable $z$,
which is established with the help of the mapping properties of various boundary integral operators borrowed mainly from~\cite{verchota}.

\section{Boundary integral operators and Dirichlet-to-Neumann maps in Lipschitz domains}\label{secint}

Let $n\ge 2$ and $U\subset\RR^n$ be a non-empty open set with Lipschitz boundary $\partial U$ and $\nu$ be the outer unit normal on $\partial U$.

We denote by $H^s$ the usual Sobolev spaces of order $s\in\RR$. In addition, we denote
\[
H^s_\Delta(U):=\big\{f\in H^s(U):\, \Delta f\in L^2(U)\big\},
\]
which will be equipped with the norm $\|f\|^2_{H^s_\Delta(U)}:= \|f\|^2_{H^s(U)}+\|\Delta f\|^2_{L^2(U)}$.

Let $\gamma^{\partial U}_D:H^1(U)\to L^2(\partial U)$ be the Dirichlet trace map, i.e. 
\[
\gamma^{\partial U}_D f:=f|_{\partial U} \text{ for $f\in C^\infty(\overline U)$,}
\]
which is extended by density and continuity. In fact, better mapping properties are known \cite[Corollary 3.7]{bgm}:

\begin{prop}\label{prop1} For any $s\in[\frac{1}{2}, \frac{3}{2}]$
the map $\gamma^{\partial U}_D:H^s_\Delta(U)\to H^{s-\frac{1}{2}}(\partial U)$ is bounded and surjective, and there exists
a bounded linear map
\[
\Upsilon:H^{s-\frac{1}{2}}(\partial U)\to H^s_\Delta(U)
\]
such that
\[
\Delta \Upsilon \varphi=0 \text{  and }
\gamma^{\partial U}_D \Upsilon \varphi=\varphi
\text{ for any }\varphi\in H^{s-\frac{1}{2}}(\partial U).
\]
\end{prop}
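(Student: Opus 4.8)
The plan is to deduce Proposition~\ref{prop1} from the corresponding statements for \emph{harmonic} functions, for which one may invoke the classical trace theorem on Lipschitz domains together with the $L^2$- and $H^1$-theory of the harmonic Dirichlet and regularity problems (Dahlberg, Jerison--Kenig, Verchota); this is, in essence, the route taken in \cite{bgm}. The hypothesis $\Delta f\in L^2(U)$ encoded in $H^s_\Delta(U)$ is precisely what rescues the endpoint cases $s\in\{\tfrac12,\tfrac32\}$, for which the trace on $H^s(U)$ alone is either ill-defined ($s=\tfrac12$) or fails to map into $H^1(\partial U)$ ($s=\tfrac32$).

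\textbf{Reduction to harmonic functions.} Given $f\in H^s_\Delta(U)$, put $g:=\Delta f\in L^2(U)$ and let $f_0\in H^1_0(U)$ be the unique Lax--Milgram solution of $\Delta f_0=g$ with $\gamma^{\partial U}_D f_0=0$. By the Jerison--Kenig regularity theorem for the inhomogeneous Dirichlet problem on Lipschitz domains, $f_0$ additionally lies in $H^{3/2}(U)$ with $\|f_0\|_{H^{3/2}(U)}\le c\,\|g\|_{L^2(U)}$. As $s\le\tfrac32$, the remainder $w:=f-f_0$ then belongs to $H^s(U)$, is harmonic, and satisfies $\|w\|_{H^s(U)}\le c\,\|f\|_{H^s_\Delta(U)}$; moreover $w$ depends only on $f$, since two admissible $f_0$ differ by a harmonic element of $H^1_0(U)$ and hence vanish. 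Consequently $\gamma^{\partial U}_D f=\gamma^{\partial U}_D w$ whenever the left-hand side is classically defined, and for $s=\tfrac12$ this identity may be \emph{taken} as the definition of $\gamma^{\partial U}_D$ on $H^{1/2}_\Delta(U)$, in accordance with the classical trace for $s>\tfrac12$. It thus remains to bound $\gamma^{\partial U}_D$ on the harmonic elements of $H^s(U)$ and to produce a bounded right inverse $\Upsilon$ with harmonic range.

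\textbf{Boundedness and the right inverse.} For $s\in(\tfrac12,\tfrac32)$ the boundedness $\gamma^{\partial U}_D:H^s(U)\to H^{s-1/2}(\partial U)$ is the classical trace theorem on a Lipschitz domain, so only the endpoints are substantive. For $s=\tfrac12$: a harmonic $w\in H^{1/2}(U)$ admits nontangential limits a.e.\ on $\partial U$ forming an $L^2(\partial U)$-function, with $\|\gamma^{\partial U}_D w\|_{L^2(\partial U)}\le c\,\|w\|_{H^{1/2}(U)}$ --- this is the square-function / nontangential-maximal-function characterization of harmonic $H^{1/2}$-functions on Lipschitz domains. For $s=\tfrac32$: if $w\in H^{3/2}(U)$ is harmonic then so is each $\partial_j w\in H^{1/2}(U)$, so $\gamma^{\partial U}_D\partial_j w\in L^2(\partial U)$ by the previous case, and since tangential differentiation commutes with $\gamma^{\partial U}_D$ one gets $\gamma^{\partial U}_D w\in H^1(\partial U)$ with the required estimate. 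For the right inverse, set $\Upsilon\varphi:=\mathcal D\big((\tfrac12+K_{\partial U})^{-1}\varphi\big)$, where $\mathcal D$ is the harmonic double-layer potential and $K_{\partial U}$ is the boundary double-layer operator (the transpose of $K'_{\partial U}$): by Verchota's theorem \cite{verchota} $\tfrac12+K_{\partial U}$ is an isomorphism of $L^2(\partial U)$ and of $H^1(\partial U)$, hence --- by interpolation, since $[L^2(\partial U),H^1(\partial U)]_\theta=H^{s-1/2}(\partial U)$ for $|s-\tfrac12|\le1$ on a Lipschitz boundary --- of $H^{s-1/2}(\partial U)$ for every $s\in[\tfrac12,\tfrac32]$; similarly $\mathcal D$ maps $L^2(\partial U)\to H^{1/2}(U)$ and $H^1(\partial U)\to H^{3/2}(U)$ boundedly into harmonic functions, hence $H^{s-1/2}(\partial U)\to H^s(U)$ by interpolation. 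The interior jump relation $\gamma^{\partial U}_D\mathcal D\psi=(\tfrac12+K_{\partial U})\psi$ yields $\gamma^{\partial U}_D\Upsilon\varphi=\varphi$ and $\Delta\Upsilon\varphi=0$, so $\Upsilon$ is a bounded right inverse and, in particular, $\gamma^{\partial U}_D$ is surjective.

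\textbf{Main obstacle.} The hard input is classical but deep: Dahlberg's solvability of the $L^2$-Dirichlet problem (equivalently, the harmonic $H^{1/2}$-to-$L^2$ trace bound) and the Dahlberg--Kenig--Verchota layer-potential theory, in particular Verchota's $L^2$- and $H^1$-invertibility of $\tfrac12+K_{\partial U}$ on Lipschitz boundaries. Granting these, the remaining care is bookkeeping: checking that the jump relation and the mapping properties of $\mathcal D$ persist along the full interpolation scale $s\in[\tfrac12,\tfrac32]$, and that the two endpoint cases --- where trace theory on $H^s(U)$ by itself breaks down --- are accessed \emph{only} via the structural hypothesis $\Delta f\in L^2(U)$, exactly through the reduction above.
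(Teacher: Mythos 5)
You should first note that the paper does not prove this proposition at all: it is imported verbatim from \cite[Corollary 3.7]{bgm}, so the only comparison available is with that reference. Your overall strategy (split $f\in H^s_\Delta(U)$ as $f_0+w$ with $f_0\in H^1_0(U)\cap H^{3/2}(U)$ solving $\Delta f_0=\Delta f$ via Jerison--Kenig regularity, reduce everything to harmonic $w$, and handle the endpoints $s=\tfrac12,\tfrac32$ by Dahlberg's nontangential/square-function estimates plus interpolation) is indeed the right circle of ideas and is close in spirit to how such trace results are obtained in \cite{bgm}; the reduction step and the endpoint boundedness bounds are sound modulo the consistency checks you acknowledge.

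There is, however, a genuine gap in the surjectivity part: the right inverse $\Upsilon\varphi=\mathcal D\big((\tfrac12+K_{\partial U})^{-1}\varphi\big)$ presupposes that $\tfrac12+K_{\partial U}$ is an isomorphism of $L^2(\partial U)$ and of $H^1(\partial U)$, and this is \emph{false} whenever $\RR^n\setminus\overline U$ has a bounded component, i.e.\ whenever $\partial U$ is disconnected. Indeed, if $H$ is such a ``hole'' and $\psi=\mathbf{1}_{\partial H}$, then Gauss's identity gives $\mathcal D\psi\equiv 0$ in $U$ (the kernel $\langle\nu(y),y-x\rangle/(\sigma_n|x-y|^n)$ integrates to zero over $\partial H$ for $x\notin\overline H$), so by the interior jump relation $(\tfrac12+K_{\partial U})\psi=0$ with $\psi\neq 0$, both in $L^2(\partial U)$ and in $H^1(\partial U)$. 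Verchota's invertibility theorems are stated for connected boundaries and cannot be quoted in the generality you need; and this is not a marginal case for the paper, since Proposition \ref{prop1} is applied precisely to $U=\Omega_+$, whose boundary $\partial\Omega\cup\Sigma$ is disconnected ($\Omega_-$ is a bounded component of the complement). The gap is repairable, but it requires an actual change of construction: either use a modified ansatz (double layer plus single layers or point sources supported in the holes), or bypass layer potentials altogether by invoking the solvability of the $L^2$-Dirichlet problem (Dahlberg) and of the regularity problem (Verchota), which do hold for bounded Lipschitz domains with disconnected boundary, and then recover the $H^{1/2}(U)$ and $H^{3/2}(U)$ bounds for the harmonic extension from the same square-function characterization you already use, interpolating for intermediate $s$. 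As written, the claimed isomorphism, and hence your proof of surjectivity and of the existence of $\Upsilon$, fails exactly in the case the paper relies on.
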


In addition, let $\gamma^{\partial U}_N:H^2(U)\to L^2(\partial U)$ be the Neumann trace, 
\[
\gamma^{\partial U}_N f:=\langle \nu,\nabla f|_{\partial U}\rangle \text{ for } f\in C^\infty(\overline U),
\]
extended by density and continuity. The following properties are known \cite[Corollary 5.7]{bgm}:
\begin{prop}\label{prop12}
For any $s\in[1,\frac{3}{2}]$ the map $\gamma^{\partial U}_N: H^{s}(U)\to H^{s-\frac{3}{2}}(\partial U)$ is well-defined, bounded and surjective, and
for any $f,g\in H^s_{\Delta}(U)$ one has the usual Green formula (integration by parts)
\begin{align*}
	\langle f,\Delta g\rangle_{L^2(U)}-\langle \Delta f,g\rangle_{L^2(U)}
&	= \langle \gamma^{\partial U}_D f,\gamma^{\partial U}_N g\rangle_{H^{\frac{3}{2}-s}(\partial U),H^{s-\frac{3}{2}}(\partial U)}\\
&\quad- \langle \gamma^{\partial U}_N f,\gamma^{\partial U}_D g\rangle_{H^{s-\frac{3}{2}}(\partial U),H^{\frac{3}{2}-s}(\partial U)}.
\end{align*}
\end{prop}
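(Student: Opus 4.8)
The plan is to reduce the statement to Proposition~\ref{prop1} (used with the parameter $2-s$ in place of $s$), the classical Green identity for smooth functions, and a density argument, so that the only genuinely deep ingredient --- the optimal $H^s$-regularity of the Neumann problem on Lipschitz domains --- enters only in the proof of surjectivity. Throughout I regard $\gamma_N^{\partial U}$ as acting on $H^s_\Delta(U)$, which is the natural domain for which the right-hand side of the Green formula is meaningful. For boundedness I would start from $f,\Phi\in C^\infty(\overline U)$: the divergence theorem on the Lipschitz domain $U$ applied to the vector field $\Phi\,\nabla f$ gives
\[
	\langle \gamma_N^{\partial U}f,\gamma_D^{\partial U}\Phi\rangle_{L^2(\partial U)}
	=\langle\nabla f,\nabla\Phi\rangle_{L^2(U)}+\langle\Delta f,\Phi\rangle_{L^2(U)}.
\]
Given $\varphi\in C^\infty(\partial U)$, choose $\Phi:=\Upsilon\varphi$ with $\Upsilon$ as in Proposition~\ref{prop1} applied with $2-s\in[\frac{1}{2},1]$, so that $\Phi\in H^{2-s}(U)$ (in fact harmonic), $\gamma_D^{\partial U}\Phi=\varphi$, and $\|\Phi\|_{H^{2-s}(U)}\le C\|\varphi\|_{H^{\frac{3}{2}-s}(\partial U)}$. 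Bounding the right-hand side above by $\|\Delta f\|_{L^2(U)}\|\Phi\|_{L^2(U)}$ and by the duality of $H^{s-1}(U)$ with $H^{1-s}(U)$ then yields
\[
	\big|\langle \gamma_N^{\partial U}f,\varphi\rangle_{L^2(\partial U)}\big|\le C\,\|f\|_{H^s_\Delta(U)}\,\|\varphi\|_{H^{\frac{3}{2}-s}(\partial U)}.
\]
Since $C^\infty(\partial U)$ is dense in $H^{\frac{3}{2}-s}(\partial U)$ and $C^\infty(\overline U)$ is dense in $H^s_\Delta(U)$ (part of the trace calculus of \cite{bgm}), $\gamma_N^{\partial U}$ extends by continuity to a bounded map $H^s_\Delta(U)\to H^{s-\frac{3}{2}}(\partial U)$, and the displayed identity shows its value is independent of the chosen extension $\Phi$, hence well-defined. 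The endpoint $s=\frac{3}{2}$, where the duality $H^{1/2}(U)\times H^{-1/2}(U)$ is borderline, can be recovered by interpolation between $s=1$ and the elementary case $s=2$, $\gamma_N^{\partial U}:H^2(U)\to H^{1/2}(\partial U)$ (the constraint $\Delta f\in L^2$ being stable under interpolation), or taken directly from \cite{bgm,verchota}.

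For the Green formula, note that for $f,g\in C^\infty(\overline U)$ the asserted identity is the classical Green formula on the Lipschitz domain $U$. Both sides are continuous on $H^s_\Delta(U)\times H^s_\Delta(U)$: the left-hand side trivially; the right-hand side because $\gamma_D^{\partial U}$ is bounded $H^s_\Delta(U)\to H^{s-\frac{1}{2}}(\partial U)\hookrightarrow H^{\frac{3}{2}-s}(\partial U)$ by Proposition~\ref{prop1} --- the embedding being valid since $s-\frac{1}{2}\ge\frac{3}{2}-s$ for $s\ge1$ and all exponents lie in $[0,1]$ --- while $\gamma_N^{\partial U}$ is bounded $H^s_\Delta(U)\to H^{s-\frac{3}{2}}(\partial U)$ by the first part, so each of the two boundary pairings on the right is a continuous bilinear form. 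Density of $C^\infty(\overline U)$ in $H^s_\Delta(U)$ then extends the identity to all $f,g\in H^s_\Delta(U)$.

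It remains to prove surjectivity, which I expect to be the main obstacle. Given $\varphi\in H^{s-\frac{3}{2}}(\partial U)$, taking $g\equiv 1$ in the Green formula shows that any $f$ with $\gamma_N^{\partial U}f=\varphi$ must satisfy $\langle\varphi,1\rangle_{\partial U}=\int_U\Delta f$; accordingly one sets $c:=|U|^{-1}\langle\varphi,1\rangle_{\partial U}$ and solves the (now compatible) inhomogeneous Neumann problem $\Delta f=c$ in $U$, $\gamma_N^{\partial U}f=\varphi$ on $\partial U$. The sharp boundary regularity for the Laplacian on bounded Lipschitz domains (Verchota \cite{verchota}; see also \cite{bgm}) places the solution in $H^s(U)$ for the whole range $s\in[1,\frac{3}{2}]$, and then $\Delta f=c\in L^2(U)$ gives $f\in H^s_\Delta(U)$; alternatively one constructs an explicit right inverse from the single-layer potential, using the Fredholm properties of $\pm\frac{1}{2}+K'_{\partial U}$ on $H^{s-\frac{3}{2}}(\partial U)$. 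This is where the argument rests on nontrivial input: boundedness and the Green formula are essentially formal once Proposition~\ref{prop1} and the density of smooth functions are at hand, whereas surjectivity --- equivalently, the optimal $H^{3/2}$-regularity up to the boundary for the Neumann problem on a general Lipschitz domain --- is a deep fact, and the endpoint $s=\frac{3}{2}$ is the most delicate point both there and in the boundedness step.
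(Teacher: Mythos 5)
You should first note what you are comparing against: the paper contains no proof of Proposition \ref{prop12} at all -- it is quoted verbatim from \cite[Corollary 5.7]{bgm}, just as Proposition \ref{prop1} is quoted from \cite[Corollary 3.7]{bgm}. So your reconstruction cannot be matched to an internal argument; judged on its own, its surjectivity outline (solve a compatible inhomogeneous Neumann problem with sharp Lipschitz regularity, or invert $-\tfrac12+K'_{\partial U}$ via \cite{verchota,fabes}) is reasonable and close in spirit to the cited literature, but the boundedness part does not prove the statement as written, and the case the paper actually needs most is left open.

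Two concrete gaps. First, you prove boundedness only on $H^s_\Delta(U)$ in the graph norm (your estimate contains $\|\Delta f\|_{L^2(U)}$), whereas the proposition asserts that $\gamma^{\partial U}_N$ is well-defined and bounded on $H^s(U)$; your method cannot reach that, because for a general $f\in H^s(U)$ the term $\langle\Delta f,\Upsilon\varphi\rangle$ has no canonical meaning: $\Delta f$ only pairs with elements of the closure of $C^\infty_c(U)$ in $H^{2-s}(U)$, while $\Upsilon\varphi$ has non-trivial trace. Second, the endpoint $s=\tfrac32$ -- precisely the value used for $A_{(\frac32)}$ in Theorems \ref{thm17}, \ref{thm18} and \ref{thm20}(i) -- is not actually handled. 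The duality argument needs $s-1<\tfrac12$; your interpolation repair is unjustified on two counts: it presumes that $H^{3/2}_\Delta(U)$ interpolates between $H^1_\Delta(U)$ and $H^2_\Delta(U)$, which is exactly the kind of delicate statement about constrained spaces that cannot be waved through, and the ``elementary'' endpoint $\gamma^{\partial U}_N:H^2(U)\to H^{1/2}(\partial U)$ is false on a general Lipschitz domain, since $\nu$ is merely $L^\infty$: already for smooth $f$ the normal derivative jumps at corners and fails to lie in $H^{1/2}(\partial U)$, so the correct $s=2$ target is $L^2(\partial U)$ and interpolation would only give $H^{-1/4}(\partial U)$ at $s=\tfrac32$. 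Falling back on \cite{bgm} at this point is circular, since the entire proposition is their Corollary 5.7. Finally, the steps you call formal (density of $C^\infty(\overline U)$ in $H^s_\Delta(U)$ and in $H^{2-s}_\Delta(U)$, and the first Green identity for smooth $f$ against $\Upsilon\varphi\in H^{2-s}_\Delta(U)$ with $2-s<1$, hence $\Upsilon\varphi\notin H^1(U)$) are themselves pieces of the sharp Lipschitz trace calculus rather than routine facts.
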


Denote by $\Phi$ the fundamental solution of the Laplace equation in $\RR^n$ defined by
\begin{equation*}
	\Phi:\quad \RR^n\setminus\{0\}\ni x\mapsto \begin{cases}
		\dfrac{1}{2\pi}\log|x|,& \text{for }n = 2,\\[\smallskipamount]
		\dfrac{1}{\sigma_n (n-2)|x|^{n-2}}, &\text{for }n \geq 3,
		\end{cases}
\end{equation*}
where $\sigma_n$ stands for the hypersurface measure of the unit sphere in $\RR^n$.
Define the single layer boundary operator $S_{\partial U}:L^2(\partial U)\to L^2(\partial U)$ by
\[
S_{\partial U} f(\cdot ):=\int_{\partial U}\Phi(\cdot-y)f(y)\dd s(y).
\]
It is clear from the definition that $S_{\partial U}:L^2(\partial U)\to L^2(\partial U)$ is bounded and self-adjoint.
In addition, $\ran S_{\partial U}\subset H^1(\partial U)$ and
\[
S_{\partial U}:\ L^2(\partial U)\to H^1(\partial U)
\]
is a bounded operator as well, see \cite[Lemma 1.8]{verchota}.

We define the double layer operator $K_{\partial U}$ by
\[
	K_{\partial U} f (\cdot)=\mathrm{p.v.}\int_{\partial U} \frac{\langle \nu(y),y-\cdot\rangle}{\sigma_n|\cdot-y|^n} f(y)\dd s(y),
\]
and its formal adjoint $K'_{\partial U}$ by
\[
	K'_{\partial U} f (\cdot)=\mathrm{p.v.}\int_{\partial U} \frac{\langle \nu(\cdot ),\cdot -y\rangle}{\sigma_n|\cdot-y|^n} f(y)\dd s(y).
\]
The following result is available, see \cite[Theorem 7.1]{Mc} for $s\in(0,1)$ and \cite[Theorems 3.3 and 4.12]{verchota}
for $s\in\{0,1\}$:
\begin{prop} For any $s\in[0,1]$ the operators
\[
K_{\partial U}: H^s(\partial U)\to H^s(\partial U), \quad K'_{\partial U}: H^{-s}(\partial U)\to H^{-s}(\partial U)
\]
are bounded.	
\end{prop}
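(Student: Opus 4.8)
The plan is to reduce the statement to the two endpoint exponents $s=0$ and $s=1$, obtain the intermediate range by interpolation, and then pass from $K_{\partial U}$ to $K'_{\partial U}$ by duality; in this way all the analytic difficulty is confined to cited results.

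First I would record the endpoints. The case $s=0$ is the $L^2(\partial U)$-boundedness of the double layer operator on a Lipschitz boundary, i.e.\ the Coifman--McIntosh--Meyer theorem in the form of \cite[Theorem 3.3]{verchota}. The case $s=1$, namely the boundedness of $K_{\partial U}:H^1(\partial U)\to H^1(\partial U)$, is \cite[Theorem 4.12]{verchota}: its proof differentiates the double layer kernel tangentially and rewrites the outcome as a finite sum of Cauchy-type commutators, so that tangential derivatives of $K_{\partial U}f$ are controlled in $L^2$ by those of $f$, the $L^2$ theory being applied once more. Neither of these would be reproved here.

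Second, for $s\in(0,1)$ I would invoke complex interpolation: on a Lipschitz boundary one has $\big[L^2(\partial U),H^1(\partial U)\big]_{s}=H^s(\partial U)$ with equivalent norms, so the boundedness of $K_{\partial U}$ on both endpoint spaces forces its boundedness on $H^s(\partial U)$ for every $s\in(0,1)$; alternatively, this intermediate range is precisely \cite[Theorem 7.1]{Mc}. Third, I would transfer everything to $K'_{\partial U}$ by duality. By the very definitions of the two kernels, $K'_{\partial U}$ is the formal $L^2(\partial U)$-adjoint of $K_{\partial U}$, the identity $\langle K_{\partial U}f,g\rangle_{L^2(\partial U)}=\langle f,K'_{\partial U}g\rangle_{L^2(\partial U)}$ being Fubini's theorem after the principal-value prescription and the off-diagonal absolute convergence are accounted for. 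Hence the Banach-space adjoint of the bounded operator $K_{\partial U}:H^s(\partial U)\to H^s(\partial U)$ is bounded on $H^s(\partial U)^{*}$; identifying $H^s(\partial U)^{*}\cong H^{-s}(\partial U)$ via the $L^2$ pairing and noting that this adjoint coincides with $K'_{\partial U}$ on the dense subspace $L^2(\partial U)\subset H^{-s}(\partial U)$ yields the boundedness of $K'_{\partial U}:H^{-s}(\partial U)\to H^{-s}(\partial U)$ for all $s\in[0,1]$.

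The genuine obstacle is contained entirely in the endpoint inputs: the Coifman--McIntosh--Meyer $L^2$-bound at $s=0$ and Verchota's commutator estimate at $s=1$ are both deep harmonic-analysis facts, but they are available off the shelf. Everything after that --- interpolation of the Sobolev scale and the duality argument --- is routine; the only mild care needed concerns the precise form of duality for the Sobolev scale on a Lipschitz boundary and the legitimacy of the adjointness identity at the level of singular integral operators.
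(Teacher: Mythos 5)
Your proposal is correct and follows essentially the same route as the paper, which simply quotes these mapping properties from the literature: \cite[Theorem 7.1]{Mc} for $s\in(0,1)$ and \cite[Theorems 3.3 and 4.12]{verchota} for the endpoints $s\in\{0,1\}$. The interpolation and $L^2$-duality glue you add (with $K'_{\partial U}$ on $H^{-s}(\partial U)$ understood as the continuous extension from the dense subspace $L^2(\partial U)$) is the standard way these citations combine, so there is nothing to object to.
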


\begin{defin}
The Dirichlet Laplacian $-\Delta_{U}$ associated with $U$ is the linear operator in $L^2(U)$ defined by
\[
\dom (-\Delta_U):=\big\{
f\in H^\dt_\Delta(U):\ \gamma^{\partial U}_D f=0
\big\},
\quad
-\Delta_U:\ f\mapsto -\Delta f.
\]
It is known to be a self-adjoint operator with compact resolvent \cite[Theorem 6.9]{bgm}, and
for any $z\in\CC\setminus \spec (-\Delta_U)$ the resolvent $(-\Delta_U-z)^{-1}$
defines a bounded operator $L^2(U)\to H^\dt_\Delta(U)$.
\end{defin}

\begin{lemma}\label{lemma5} Let $s\in[0,1]$, $\varphi\in H^s(\partial U)$ and $z\in \CC\setminus \spec (-\Delta_U)$, then
	the boundary value problem
\begin{equation*}
(-\Delta-z) f=0 \text{ in } U,
\quad \gamma^{\partial U}_D f=\varphi, \quad f\in H^{s+\frac{1}{2}}(U),
\end{equation*}
has a unique solution, which is given by
\[
f:=z(-\Delta_U-z)^{-1}\Upsilon \varphi+\Upsilon\varphi
\]
with $\Upsilon$ defined in Proposition \ref{prop1}. The resulting map $\varphi\mapsto f$, i.e.
\begin{align}\label{PO}
\begin{split}
P^U_z&:=z(-\Delta_U-z)^{-1}\Upsilon +\Upsilon\\
&\equiv \Big[ z(-\Delta_U-z)^{-1}+1\Big]\Upsilon:\ H^s(\partial U)\to H^{s+\frac{1}{2}}_\Delta(U)
\end{split}
\end{align}
will be called the Poisson operator for $U$ and $z$. By construction, it is a bounded operator. Moreover, the adjoint operator of $P_z^U: L^2(\partial U)\to L^2(U)$ is given by:
\[
P^\ast_z=-\gamma^{\partial U}_N(-\Delta_U-\Bar{z})^{-1}: L^2(U)\to L^2(\partial U).
\]
\end{lemma}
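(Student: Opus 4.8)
The plan is to verify the explicit candidate directly, obtain the boundedness of $P^U_z$ by composition, reduce uniqueness to the unique solvability of the $L^2$-Dirichlet problem on $U$, and read off the adjoint from the Green formula of Proposition~\ref{prop12}. For existence, fix $\varphi\in H^s(\partial U)$ and set $f:=\Upsilon\varphi+z(-\Delta_U-z)^{-1}\Upsilon\varphi$. Since $s+\frac{1}{2}\in[\frac{1}{2},\dt]$, Proposition~\ref{prop1} applied with order $s+\frac{1}{2}$ gives $\Upsilon\varphi\in H^{s+\frac{1}{2}}_\Delta(U)\subset L^2(U)$ with $\Delta\Upsilon\varphi=0$ and $\gamma^{\partial U}_D\Upsilon\varphi=\varphi$; hence $z(-\Delta_U-z)^{-1}\Upsilon\varphi\in\dom(-\Delta_U)\subset H^{\dt}_\Delta(U)\subset H^{s+\frac{1}{2}}_\Delta(U)$, the last inclusion because $s\le1$. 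Thus $f\in H^{s+\frac{1}{2}}_\Delta(U)\subset H^{s+\frac{1}{2}}(U)$ with $\gamma^{\partial U}_D f=\varphi$, and since the resolvent term is in $\dom(-\Delta_U)$, $\Delta\Upsilon\varphi=0$, and $(-\Delta_U)(-\Delta_U-z)^{-1}=1+z(-\Delta_U-z)^{-1}$, one computes $-\Delta f=z(-\Delta_U)(-\Delta_U-z)^{-1}\Upsilon\varphi=z\Upsilon\varphi+z^2(-\Delta_U-z)^{-1}\Upsilon\varphi=zf$, so $(-\Delta-z)f=0$. This proves existence and identifies the solution with $P^U_z\varphi$; boundedness of $P^U_z:H^s(\partial U)\to H^{s+\frac{1}{2}}_\Delta(U)$ is then immediate from this factorisation, since $\Upsilon:H^s(\partial U)\to H^{s+\frac{1}{2}}_\Delta(U)$ is bounded (Proposition~\ref{prop1}) and $(-\Delta_U-z)^{-1}:L^2(U)\to H^{\dt}_\Delta(U)\hookrightarrow H^{s+\frac{1}{2}}_\Delta(U)$ is bounded.

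For uniqueness it suffices to show that $w\in H^{s+\frac{1}{2}}(U)$ with $(-\Delta-z)w=0$ and $\gamma^{\partial U}_D w=0$ vanishes. Here $\Delta w=-zw\in L^2(U)$, so $w\in H^{s+\frac{1}{2}}_\Delta(U)\subset H^{\frac{1}{2}}_\Delta(U)$; since $0\notin\spec(-\Delta_U)$, the function $w-(-\Delta_U)^{-1}(-\Delta w)$ is harmonic, belongs to $H^{\frac{1}{2}}_\Delta(U)$ and has vanishing Dirichlet trace, hence is zero by the unique solvability of the $L^2$-Dirichlet problem on a Lipschitz domain---equivalently, the injectivity of $\gamma^{\partial U}_D$ on harmonic functions in $H^{\frac{1}{2}}_\Delta(U)$, see \cite{verchota,bgm}. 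Consequently $w=(-\Delta_U)^{-1}(-\Delta w)\in\dom(-\Delta_U)$ and $(-\Delta_U-z)w=(-\Delta-z)w=0$, so $w=0$ as $z\notin\spec(-\Delta_U)$.

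For the adjoint, observe that $\bar z\notin\spec(-\Delta_U)$ because $-\Delta_U$ is self-adjoint. Given $u\in L^2(U)$, put $g:=(-\Delta_U-\bar z)^{-1}u\in\dom(-\Delta_U)\subset H^{\dt}_\Delta(U)$, so $u=-\Delta g-\bar zg$. For $\varphi\in L^2(\partial U)$, using $\big((-\Delta_U-z)^{-1}\big)^\ast=(-\Delta_U-\bar z)^{-1}$ one gets $\langle P^U_z\varphi,u\rangle_{L^2(U)}=\langle\Upsilon\varphi,\bar z(-\Delta_U-\bar z)^{-1}u+u\rangle_{L^2(U)}=\langle\Upsilon\varphi,\bar zg+u\rangle_{L^2(U)}=-\langle\Upsilon\varphi,\Delta g\rangle_{L^2(U)}$. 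Now apply the Green formula of Proposition~\ref{prop12} to the pair $\Upsilon\varphi$ and $g$: for $\varphi\in C^\infty(\partial U)$ one has $\Upsilon\varphi\in H^{\dt}_\Delta(U)$ (Proposition~\ref{prop1} with order $\dt$), so the formula applies with $s=\dt$, and using $\Delta\Upsilon\varphi=0$ and $\gamma^{\partial U}_D g=0$ it collapses to $\langle\Upsilon\varphi,\Delta g\rangle_{L^2(U)}=\langle\varphi,\gamma^{\partial U}_N g\rangle_{L^2(\partial U)}$, the boundary pairing being the $L^2(\partial U)$ one since $\gamma^{\partial U}_N g\in L^2(\partial U)$; the general case $\varphi\in L^2(\partial U)$ follows by approximating $\varphi$ by smooth functions and passing to the limit, which is legitimate because $\Upsilon:L^2(\partial U)\to H^{\frac{1}{2}}_\Delta(U)$ is bounded while $\Delta g$ and $\gamma^{\partial U}_N g$ stay fixed. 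Hence $\langle P^U_z\varphi,u\rangle_{L^2(U)}=-\langle\varphi,\gamma^{\partial U}_N(-\Delta_U-\bar z)^{-1}u\rangle_{L^2(\partial U)}$ for all $\varphi\in L^2(\partial U)$ and $u\in L^2(U)$, that is $\big(P^U_z\big)^\ast=-\gamma^{\partial U}_N(-\Delta_U-\bar z)^{-1}$ as a map $L^2(U)\to L^2(\partial U)$.

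The only genuinely nonelementary input is the $L^2$-Dirichlet uniqueness used above (equivalently, the regularity identity $\{f\in H^{\frac{1}{2}}_\Delta(U):\gamma^{\partial U}_D f=0\}=\dom(-\Delta_U)$), which is exactly where the sharp Lipschitz trace and layer-potential theory enters; everything else is bookkeeping with Propositions~\ref{prop1}--\ref{prop12} and the self-adjointness of $-\Delta_U$.
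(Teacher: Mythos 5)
Your proof is correct, and its skeleton is the same as the paper's: verify the explicit formula $f=\Upsilon\varphi+z(-\Delta_U-z)^{-1}\Upsilon\varphi$, get boundedness by composition, and obtain the adjoint from the Green formula of Proposition~\ref{prop12}. Two steps are handled differently, and in both cases your version is the more careful one. For uniqueness, the paper simply asserts that any solution can be "uniquely represented" as $g+\Upsilon\varphi$ with $g\in\dom(-\Delta_U)$, which silently uses that a zero-trace function in $H^{s+\frac12}_\Delta(U)$ belongs to $\dom(-\Delta_U)$; you make this input explicit by subtracting $(-\Delta_U)^{-1}(-\Delta w)$ and invoking the injectivity of $\gamma^{\partial U}_D$ on harmonic functions in $H^{\frac12}_\Delta(U)$, i.e.\ the unique solvability of the low-regularity Dirichlet problem from \cite{verchota,bgm} --- the same nonelementary fact, just named rather than hidden. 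For the adjoint, the paper applies the Green formula directly to the pair $P^U_z\varphi$ and $(-\Delta_U-\bar z)^{-1}v$, even though for $\varphi\in L^2(\partial U)$ one only has $P^U_z\varphi\in H^{\frac12}_\Delta(U)$, which lies outside the stated range $s\in[1,\frac32]$ of Proposition~\ref{prop12}; you sidestep this by first moving the Laplacian onto the resolvent term, applying Green only to $\Upsilon\varphi\in H^{\frac32}_\Delta(U)$ for smooth $\varphi$, and then passing to general $\varphi\in L^2(\partial U)$ by density, which keeps every application of the trace and Green formulas within their stated validity. So the argument is not only correct but patches a small gap the paper leaves implicit; the only point worth flagging is that your density step tacitly uses that the harmonic extension $\Upsilon\varphi$ is the same object for the different orders $s$, which again follows from the Dirichlet uniqueness you already invoke.
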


\begin{proof} One can uniquely represent $f:=g+\Upsilon\varphi$ with $g\in\dom(-\Delta_U)$. The substitution into the problem shows that
$f$ is a solution if and only if $(-\Delta-z)g=-(-\Delta-z)\Upsilon\varphi$.

Due to the properties of $\Upsilon$ this is equivalent to
$(-\Delta_U-z)g=z\Upsilon\varphi$. As $\Upsilon\varphi\in L^2(U)$ and $z\in \CC\setminus \spec (-\Delta_U)$, one has a unique  solution $g$ given by $g=z(-\Delta_U-z)^{-1}\Upsilon\varphi$, which proves the first statement of the lemma.

Let $\varphi\in L^2(\partial U)$ and $v\in L^2(U)$. As $(-\Delta_U-\Bar{z})^{-1}v\in \dom (-\Delta_U)$ for any $z\in \CC\setminus \spec (-\Delta_U)$, the Green's formula yields
\begin{align*}
    \langle P_z^U \varphi,v\rangle_{L^2(U)}&=\langle P_z^U \varphi,(-\Delta-\Bar{z})(-\Delta-\Bar{z})^{-1}v\rangle_{L^2(U)}\\
   & = -\langle \gamma_D^{\partial U} P_z^U \varphi,\gamma_N^{\partial U}(-\Delta-\Bar{z})^{-1}v\rangle_{L^2(\partial U)}\\
   & = -\langle \varphi,\gamma_N^{\partial U}(-\Delta-\Bar{z})^{-1}v\rangle_{L^2(\partial U)},
\end{align*}
and this completes the proof.
\end{proof}

\begin{defin}For a fixed $s\in[0,1]$ and  $z\in \CC\setminus \spec (-\Delta_U)$ we define the Dirichlet-to-Neumann map $N^U_z$ by
\[
N^U_z:=\gamma^{\partial U}_N P^U_z:\ H^s(\partial U)\to H^{s-1}(\partial U).
\]
By construction, it is again a bounded operator.
\end{defin}

\begin{prop}\label{prop17} Let $s\in[0,1]$ be fixed. For each $z\in \CC\setminus \spec (-\Delta_U)$ the operator
\[
N^U_z-N^U_0:H^s(\partial U)\to L^{2}(\partial U)
\]
is well-defined and compact.
\end{prop}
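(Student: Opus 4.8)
The plan is to write $N^U_z-N^U_0$ explicitly in terms of the resolvent of $-\Delta_U$, to observe that this difference is more regular than each of the two summands separately, and then to read off compactness from a Rellich-type embedding. To begin with, note that $0\notin\spec(-\Delta_U)$ (the operator $-\Delta_U$ is strictly positive by the Poincar\'e inequality on the bounded set $U$), so $N^U_0=\gamma^{\partial U}_N P^U_0$ is indeed well-defined and, by \eqref{PO}, $P^U_0=\Upsilon$. Subtracting the two Poisson operators yields
\[
P^U_z-P^U_0=z\,(-\Delta_U-z)^{-1}\Upsilon .
\]
I would then track the mapping properties on the right-hand side: by Proposition \ref{prop1} (applied with the parameter $s+\frac12\in[\frac12,\frac32]$) the map $\Upsilon\colon H^s(\partial U)\to H^{s+\frac12}_\Delta(U)$ is bounded, so in particular $\Upsilon$ sends $H^s(\partial U)$ boundedly into $L^2(U)$; and $(-\Delta_U-z)^{-1}$ sends $L^2(U)$ boundedly into $H^{\frac32}_\Delta(U)$. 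Hence $P^U_z-P^U_0$ is bounded from $H^s(\partial U)$ into $H^{\frac32}_\Delta(U)$.

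Next I would apply the Neumann trace. By Proposition \ref{prop12} with $s=\frac32$ the map $\gamma^{\partial U}_N\colon H^{\frac32}_\Delta(U)\to L^2(\partial U)$ is bounded; since the Neumann trace entering the definition of $N^U_z$ is the consistent extension of the classical one to the scale $\{H^t_\Delta(U)\}_{t\in[\frac12,\frac32]}$, its linearity gives
\[
N^U_z-N^U_0=\gamma^{\partial U}_N\bigl(P^U_z-P^U_0\bigr)=z\,\gamma^{\partial U}_N(-\Delta_U-z)^{-1}\Upsilon\colon\ H^s(\partial U)\to L^2(\partial U),
\]
so that this operator is well-defined and bounded; this settles the first assertion. (Alternatively, $\gamma^{\partial U}_N(-\Delta_U-z)^{-1}$ equals $-(P^U_{\bar z})^\ast$ by Lemma \ref{lemma5}, which already exhibits boundedness into $L^2(\partial U)$.)

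It remains to prove compactness, and here I would factor $N^U_z-N^U_0=z\,T\circ\Upsilon$, where $T:=\gamma^{\partial U}_N(-\Delta_U-z)^{-1}\colon L^2(U)\to L^2(\partial U)$ is bounded (via the chain $L^2(U)\to H^{\frac32}_\Delta(U)\to L^2(\partial U)$ used above) and $\Upsilon$ is regarded as a map $H^s(\partial U)\to L^2(U)$. Since $\Upsilon$ maps $H^s(\partial U)$ boundedly into $H^{s+\frac12}(U)$ and the inclusion $H^{s+\frac12}(U)\hookrightarrow L^2(U)$ is compact (Rellich's theorem on the bounded Lipschitz domain $U$, applicable because $s+\frac12>0$ for $s\in[0,1]$), the operator $\Upsilon\colon H^s(\partial U)\to L^2(U)$ is compact; hence so is $T\circ\Upsilon$, and therefore $N^U_z-N^U_0$.

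The only point that requires genuine care is the second step: one must be sure that the Neumann trace used in the definition of $N^U_z$ is defined consistently on the whole scale $H^t_\Delta(U)$, so that the regularity gained from $(-\Delta_U-z)^{-1}$ really improves the target of $N^U_z-N^U_0$ from $H^{s-1}(\partial U)$ to $L^2(\partial U)$. Once this bookkeeping is settled, the remainder is a soft composition-with-a-compact-operator argument and involves no hard analysis.
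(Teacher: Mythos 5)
Your proposal is correct and follows essentially the same route as the paper: the same identity $N^U_z-N^U_0=z\,\gamma^{\partial U}_N(-\Delta_U-z)^{-1}\Upsilon$, with compactness obtained from the Rellich embedding $H^{s+\frac12}(U)\hookrightarrow L^2(U)$ and boundedness of the remaining factors $(-\Delta_U-z)^{-1}\colon L^2(U)\to H^{\frac32}_\Delta(U)$ and $\gamma^{\partial U}_N\colon H^{\frac32}_\Delta(U)\to L^2(\partial U)$. The only cosmetic difference is that you absorb the compact embedding into $\Upsilon$, whereas the paper inserts it as an explicit factor $J$.
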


\begin{proof} Using the representation \eqref{PO} we have
	\[
	N^U_z-N^U_0=\gamma^{\partial U}_N(P^U_z-P^U_0)=z\gamma^{\partial U}_N (-\Delta_U-z)^{-1}\Upsilon.
\]
The embedding $J:H^{s+\frac{1}{2}}(U)\to L^2(U)$ is compact for any $s\in[0,1]$, and one can rewrite
\[
N^U_z-N^U_0=z\gamma^{\partial U}_N (-\Delta_U-z)^{-1}J\Upsilon,
\]	
and the compactness follows from the boundedness of all the other factors.	
\end{proof}
	
The following relation will be important:
\begin{prop}\label{prop18a}
	For any $\psi\in H^{\frac{1}{2}}(\partial U)$ there holds
	\begin{equation}
		 \label{ident1}
	S_{\partial U} N^U_0\psi=\Big(\frac{1}{2}-K_{\partial U}\Big)\psi.
	\end{equation}
\end{prop}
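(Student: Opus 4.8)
The plan is to establish \eqref{ident1} by pairing both sides with an arbitrary $\phi\in L^2(\partial U)$ and applying the Green formula of Proposition~\ref{prop12} to the harmonic extension of $\psi$ together with the single layer potential of $\phi$. Fix $\psi\in H^{\frac{1}{2}}(\partial U)$ and put $f:=P^U_0\psi=\Upsilon\psi$; by Proposition~\ref{prop1} with $s=1$ we have $f\in H^1_\Delta(U)$ with $\Delta f=0$ in $U$ and $\gamma^{\partial U}_D f=\psi$, while by the definition of the Dirichlet-to-Neumann map $\gamma^{\partial U}_N f=N^U_0\psi\in H^{-\frac{1}{2}}(\partial U)$. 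For $\phi\in L^2(\partial U)$ let $u_\phi(x):=\int_{\partial U}\Phi(x-y)\phi(y)\dd s(y)$ be the single layer potential of $\phi$; it is harmonic in $U$, and the mapping and jump relations for layer potentials on Lipschitz domains \cite{verchota} give $u_\phi|_U\in H^{\frac{3}{2}}_\Delta(U)$, $\gamma^{\partial U}_D(u_\phi|_U)=S_{\partial U}\phi$ (continuity of the single layer potential across $\partial U$), and $\gamma^{\partial U}_N(u_\phi|_U)=\big(\tfrac{1}{2}-K'_{\partial U}\big)\phi\in L^2(\partial U)$ (the conormal jump relation, with signs taken consistently with the normalisation of $\Phi$ fixed above).

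Next I would apply the Green formula of Proposition~\ref{prop12} with $s=1$ to the pair $f,\,u_\phi|_U\in H^1_\Delta(U)$. Since $\Delta f=\Delta u_\phi=0$ in $U$ its left-hand side vanishes, so it reduces to $\langle\gamma^{\partial U}_D f,\gamma^{\partial U}_N(u_\phi|_U)\rangle=\langle\gamma^{\partial U}_N f,\gamma^{\partial U}_D(u_\phi|_U)\rangle$, that is,
\[
\big\langle\psi,\big(\tfrac{1}{2}-K'_{\partial U}\big)\phi\big\rangle_{L^2(\partial U)}=\big\langle N^U_0\psi,\,S_{\partial U}\phi\big\rangle_{H^{-\frac{1}{2}}(\partial U),\,H^{\frac{1}{2}}(\partial U)}.
\]
On the right-hand side, $S_{\partial U}$ is self-adjoint on $L^2(\partial U)$ and extends to a bounded map $H^{-\frac{1}{2}}(\partial U)\to H^{\frac{1}{2}}(\partial U)$, so the right-hand side equals $\langle S_{\partial U}N^U_0\psi,\phi\rangle_{L^2(\partial U)}$; on the left-hand side, $K'_{\partial U}$ is the $L^2$-adjoint of $K_{\partial U}$, so the left-hand side equals $\langle(\tfrac{1}{2}-K_{\partial U})\psi,\phi\rangle_{L^2(\partial U)}$. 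Since this holds for every $\phi\in L^2(\partial U)$ and both $S_{\partial U}N^U_0\psi$ and $(\tfrac{1}{2}-K_{\partial U})\psi$ lie in $L^2(\partial U)$, the two coincide, which is \eqref{ident1}.

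The routine ingredients here are the regularity bookkeeping ($\Upsilon\psi\in H^1_\Delta(U)$ and $u_\phi|_U\in H^{\frac{3}{2}}_\Delta(U)$, so that Proposition~\ref{prop12} applies), the self-adjointness and extension of $S_{\partial U}$, and the formal adjointness of $K_{\partial U}$ and $K'_{\partial U}$. The point that needs real care is the conormal jump relation $\gamma^{\partial U}_N(u_\phi|_U)=(\tfrac{1}{2}-K'_{\partial U})\phi$ on a general Lipschitz boundary: one has to fix the correct sign for the kernel and normalisation conventions used here, and to identify the non-tangential boundary limit of $\langle\nu,\nabla u_\phi\rangle$ with the trace $\gamma^{\partial U}_N$ of Proposition~\ref{prop12}; this is exactly where I would invoke the Lipschitz layer-potential theory of \cite{verchota}. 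An essentially equivalent route, which moves the same difficulty onto the double layer potential, is to apply the third Green (representation) identity to $f=\Upsilon\psi$ and take its interior Dirichlet trace, using $\gamma^{\partial U}_D(\text{single layer of }N^U_0\psi)=S_{\partial U}N^U_0\psi$ and $\gamma^{\partial U}_D(\text{double layer of }\psi)\big|_{\mathrm{int}}=\big(\tfrac{1}{2}+K_{\partial U}\big)\psi$.
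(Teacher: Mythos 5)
Your proof is correct in substance but follows a genuinely different route from the paper's. The paper applies the representation formula $u=\cD\,\gamma^{\partial U}_D u+\cS\,\gamma^{\partial U}_N u$ (Medkov\'a) to $u=P^U_0\psi$ and then takes the \emph{Dirichlet} trace, so it only needs the two relations $\gamma^{\partial U}_D\cS=S_{\partial U}$ and $\gamma^{\partial U}_D\cD=\tfrac12+K_{\partial U}$ from \eqref{Jump1}; the Neumann trace of the single layer potential never enters. Your argument is instead a duality argument: Green's second identity (Proposition \ref{prop12}) applied to $\Upsilon\psi$ and $\cS\phi$ for arbitrary $\phi\in L^2(\partial U)$, followed by the self-adjointness of $S_{\partial U}$ and $K'_{\partial U}=K_{\partial U}^{*}$. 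This is a legitimate proof, but it hinges on exactly the ingredient the paper's route avoids, namely the conormal jump relation for $\cS$ on a Lipschitz boundary and its compatibility with the extended trace $\gamma^{\partial U}_N$ of Proposition \ref{prop12} (which you rightly flag as the delicate point). The ``essentially equivalent route'' you mention at the end is precisely the paper's proof.

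One concrete warning about the step you identified as the crux: the sign you use, $\gamma^{\partial U}_N(\cS\phi)=(\tfrac12-K'_{\partial U})\phi$, is the \emph{opposite} of the relation printed in \eqref{Jump1}, $\gamma^{\partial U}_N\cS=-\tfrac12+K'_{\partial U}$; had you quoted \eqref{Jump1} verbatim, your computation would deliver $S_{\partial U}N^U_0\psi=(K_{\partial U}-\tfrac12)\psi$, i.e.\ the wrong sign. In fact your sign is the one consistent with the paper's normalisation of $\Phi$ for $n\ge3$: on the unit sphere in $\RR^3$ one has $K'_{\partial U}Y_l=\tfrac{1}{2(2l+1)}Y_l$, $S_{\partial U}Y_l=\tfrac{1}{2l+1}Y_l$, and $\cS Y_l=\tfrac{r^l}{2l+1}Y_l$ inside, so the interior conormal derivative is $\tfrac{l}{2l+1}Y_l=(\tfrac12-K'_{\partial U})Y_l$, and with this sign your duality argument reproduces \eqref{ident1} (which the same example confirms, since $N^U_0Y_l=lY_l$). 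The printed middle relation in \eqref{Jump1} corresponds to the opposite normalisation of the fundamental solution (compare the paper's $n=2$ formula for $\Phi$), and is never used in the paper's proof. So your argument is sound, but a complete write-up must fix one consistent convention for $\Phi$, $\cS$, $S_{\partial U}$ and $K'_{\partial U}$, quote the single-layer Neumann jump in that convention (translating carefully from \cite{verchota} or \cite{Mc}, whose conventions differ from the paper's), and justify identifying the non-tangential limit of $\langle\nu,\nabla\cS\phi\rangle$ with $\gamma^{\partial U}_N(\cS\phi)$; otherwise the whole identity comes out with the wrong sign. This convention bookkeeping is the price of your route, and avoiding it is what the paper's Dirichlet-trace-only argument buys.
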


\begin{proof}
For $g\in L^2(\partial U)$ and $x\in U$ define the single and double layer potentials,
	\begin{align*}
		\cS g (x)&=\int_{\partial U} \Phi(x-y)g(y)\dd s(y),\\
		\cD g (x)&=\int_{\partial U} \frac{\langle \nu(y),y-x\rangle}{\sigma_n|x-y|^n} g(y)\dd s(y).
	\end{align*}
It is known that $\cS:H^{-\frac{1}{2}}(\partial U)\to H^1_\Delta(U)$  and $\cD:L^2(\partial U)\to H^1_\Delta(U)$ are well-defined and bounded,
see \cite[Theorem 1]{cost2}. In addition, we have the following identities:
\begin{equation}
	\label{Jump1}
\gamma^{\partial U}_D \cS=S_{\partial U},
\quad
\gamma^{\partial U}_N \cS=-\frac{1}{2}+K'_{\partial U},
\quad
\gamma^{\partial U}_D \cD=\frac{1}{2}+K_{\partial U},
\end{equation}
see~\cite[Theorem 6.11 and Eq. (7.5)]{Mc}.

Let $\psi\in H^{\frac{1}{2}}(\partial U)$ and set $u=P^{U}_0 \psi\in H^1_\Delta(U)$. By \cite[Theorem 5.6.1]{medkova} we have 
\begin{align*}
	u= \cD \gamma^{\partial U}_D u +\cS \gamma^{\partial U}_N u\equiv \cD \psi +\cS N^U_0 \psi.
\end{align*}
Using  \eqref{Jump1} we get 
\[
\psi=\gamma^{\partial U}_D \cD \psi+\gamma^{\partial U}_D \cS N^U_0 \psi=
\Big(\frac{1}{2}+K_{\partial U}\Big)\psi+  \cS N^U_0 \psi. \qedhere
\]
\end{proof}

The following relation will be important:
\begin{equation}
	\label{skk}
	S_{\partial U}K'_{\partial U}\psi=K_{\partial U} S_{\partial U}\psi, \quad\forall \psi\in H^{-\frac{1}{2}}(\partial U).
\end{equation}
known as the symmetrization formula \cite[Proposition 5.7.1]{medkova}.

For subsequent use, we mention the following identity, which follows from the integration by parts:
\begin{lemma}\label{lem19}
	For any $\varphi\in H^1(\partial U)$, any $z\in \CC\setminus\spec(-\Delta_U)$ and $f:=P^U_z \varphi$ there holds
	\[
	\langle\varphi,N^U_z \varphi\rangle_{L^2(\partial U)}=\int_U|\nabla f|^2\dd x-z\int_U|f|^2\dd x.
	\]
	In particular, $N^U_z$ is a symmetric operator in $L^2(\partial U)$ for real $z$.
\end{lemma}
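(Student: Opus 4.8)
The plan is to derive the identity from the first Green formula (integration by parts) applied to $f=P^U_z\varphi$, and then to deduce the symmetry statement by polarisation. First I would record the properties of $f$: taking $s=1$ in Lemma~\ref{lemma5} and in the definition of the Dirichlet-to-Neumann map, $f:=P^U_z\varphi$ belongs to $H^{3/2}_\Delta(U)$, solves $(-\Delta-z)f=0$ in $U$ (so that $-\Delta f=zf\in L^2(U)$), and has traces $\gamma^{\partial U}_D f=\varphi\in H^1(\partial U)$ and $\gamma^{\partial U}_N f=N^U_z\varphi$, the latter lying in $L^2(\partial U)$ by Proposition~\ref{prop12}. In particular $f\in H^1(U)$, so $\nabla f\in L^2(U)^n$ and the two integrals on the right-hand side of the asserted identity are finite.

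The main step is the integration by parts. On the one hand, from $\Delta f=-zf$ one has $-(\Delta f)\,\overline f=z|f|^2$ pointwise, hence $-\int_U(\Delta f)\,\overline f\,\dd x=z\int_U|f|^2\,\dd x$. On the other hand, the first Green formula, used in its $H^{3/2}_\Delta(U)$ form (this is precisely the integration-by-parts characterisation of the weak Neumann trace on Lipschitz domains; on $H^{3/2}_\Delta(U)$ it is compatible with the trace maps of Propositions~\ref{prop1} and~\ref{prop12}, and it may also be obtained by approximating $f$ in $H^{3/2}_\Delta(U)$ by functions in $C^\infty(\overline U)$), gives
\[
-\int_U(\Delta f)\,\overline f\,\dd x=\int_U|\nabla f|^2\,\dd x-\langle\varphi,N^U_z\varphi\rangle_{L^2(\partial U)},
\]
where I used $\gamma^{\partial U}_D f=\varphi$ and $\gamma^{\partial U}_N f=N^U_z\varphi\in L^2(\partial U)$ to identify the boundary term as the $L^2(\partial U)$ pairing $\langle\varphi,N^U_z\varphi\rangle$. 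Combining the two relations gives the asserted identity. (One need only be attentive to the sesquilinearity convention so that the boundary term comes out as $\langle\varphi,N^U_z\varphi\rangle$ and not its complex conjugate; since $\varphi\in H^1(\partial U)$ and $N^U_z\varphi\in L^2(\partial U)$ this is routine bookkeeping.)

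For the final assertion, let $z$ be real. Then the left-hand side of the identity is real for every $\varphi\in H^1(\partial U)$, so the quadratic form $\varphi\mapsto\langle\varphi,N^U_z\varphi\rangle_{L^2(\partial U)}$ is real-valued on $H^1(\partial U)$; by the polarisation identity this forces $\langle\varphi,N^U_z\psi\rangle_{L^2(\partial U)}=\langle N^U_z\varphi,\psi\rangle_{L^2(\partial U)}$ for all $\varphi,\psi\in H^1(\partial U)$, i.e.\ $N^U_z$, regarded as a densely defined operator in $L^2(\partial U)$ with domain $H^1(\partial U)$, is symmetric. (Alternatively this also drops out of Proposition~\ref{prop12} with $g=f$: comparing imaginary parts there gives $\Im\langle\varphi,N^U_z\varphi\rangle_{L^2(\partial U)}=(\Im z)\,\|f\|_{L^2(U)}^2$, which vanishes for real $z$.) I expect the only genuine obstacle to be the justification of the first Green formula in this low-regularity Lipschitz setting and the verification that the Neumann trace occurring in it coincides with the one used to define $N^U_z$; once that is in place, the rest is elementary algebra.
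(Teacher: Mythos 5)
Your argument is correct and takes essentially the same route the paper intends: the paper offers no proof beyond the remark that the identity ``follows from the integration by parts'', and your application of the first Green identity to $f=P^U_z\varphi\in H^{3/2}_\Delta(U)$ (using $s=1$ in Lemma~\ref{lemma5}, $\gamma^{\partial U}_D f=\varphi$, $\gamma^{\partial U}_N f=N^U_z\varphi\in L^2(\partial U)$) is precisely that argument, with the symmetry for real $z$ then following by polarisation. Only a cosmetic remark: with the paper's pairing convention your parenthetical alternative should read $\Im\langle\varphi,N^U_z\varphi\rangle_{L^2(\partial U)}=-(\Im z)\,\|f\|^2_{L^2(U)}$, which does not affect the conclusion for real $z$.
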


We discuss in greater detail the case when $\partial U$ is disconnected. Let $\Lambda$ be a connected component of $\partial U$ and $\Lambda_0:=\partial U\setminus \Gamma\ne \emptyset$. This gives rise to decompositions 
\[
H^s(\partial U)\simeq H^s(\Lambda)\oplus H^s(\Lambda_0)
\]
and similar decompositions for other functional spaces over $U$, as well as to the representation of
trace maps $\gamma^{\partial U}_{D/N}$ as
\[
\gamma^{\partial U}_{D/N} f=(\gamma^{\Lambda}_{D/N} f, \gamma^{\Lambda_0}_{D/N} f),
\quad
\gamma^{\Lambda}_{D/N} f:=\gamma^{\partial U}_{D/N} f|_{\Lambda},
\quad
\gamma^{\Lambda_0}_{D/N} f:=\gamma^{\partial U}_{D/N} f|_{\Lambda_0}.
\]
For $s\in[0,1]$ and $z\notin \spec(-\Delta_U)$ we introduce the partial Dirichlet-to-Neumann operators
\[
\Tilde N^U_z:\ 
H^s(\Lambda)\ni \varphi\mapsto \gamma^{\Lambda}_N P^U_z(\varphi,0)\in H^{s-1}(\Lambda),
\]
i.e. $\Tilde N^U_z\varphi:=\gamma^\Lambda_N f$, where $f\in H^{s+\frac{1}{2}}_\Delta(U)$ is the unique solution to
\[
(-\Delta-z)f=0 \text{ in } U,\quad
\gamma^\Lambda_D f=\varphi,\quad
\gamma^{\Lambda_0}_D f=0.
\]
In view of the relation with $N^U_z$ one obtains
\begin{prop}\label{prop18}
	For each $s\in[0,1]$ and $z\in \CC\setminus \spec (-\Delta_U)$ the operator $\Tilde N^U_z-\Tilde N^U_0:H^{s}(\Lambda)\to L^2(\Lambda)$ is compact.
\end{prop}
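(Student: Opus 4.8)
The plan is to recognize $\Tilde N^U_z$ as a ``corner block'' of the full Dirichlet-to-Neumann map $N^U_z$ relative to the splitting $\partial U=\Lambda\sqcup\Lambda_0$, and then read off the compactness from Proposition~\ref{prop17}.

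First I would record the elementary identity
\[
\Tilde N^U_z\varphi=\bigl(N^U_z(\varphi,0)\bigr)\big|_{\Lambda},\qquad \varphi\in H^s(\Lambda).
\]
This is immediate from the definitions: the function $f:=P^U_z(\varphi,0)\in H^{s+\frac12}_\Delta(U)$ is, by construction, the unique solution of $(-\Delta-z)f=0$ in $U$ with $\gamma^\Lambda_D f=\varphi$ and $\gamma^{\Lambda_0}_D f=0$, so $\Tilde N^U_z\varphi=\gamma^\Lambda_N f$, whereas $N^U_z(\varphi,0)=\gamma^{\partial U}_N f=(\gamma^\Lambda_N f,\gamma^{\Lambda_0}_N f)$, whose first component is precisely $\gamma^\Lambda_N f$.

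Next I would introduce the two bounded maps that come with the decomposition of $\partial U$ into (unions of) connected components: the embedding $E\colon H^s(\Lambda)\to H^s(\partial U)$, $\varphi\mapsto(\varphi,0)$, and the coordinate projection $R\colon L^2(\partial U)\to L^2(\Lambda)$, $(g_\Lambda,g_{\Lambda_0})\mapsto g_\Lambda$. Since $\Lambda$ and $\Lambda_0$ lie at a positive distance from each other, the stated isomorphism $H^s(\partial U)\simeq H^s(\Lambda)\oplus H^s(\Lambda_0)$ (and its $L^2$-analogue) is a genuine topological direct sum, so $E$ and $R$ are bounded for every $s\in[0,1]$. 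In this notation the identity of the previous step reads
\[
\Tilde N^U_z-\Tilde N^U_0=R\,(N^U_z-N^U_0)\,E
\]
as operators $H^s(\Lambda)\to L^2(\Lambda)$.

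Finally, Proposition~\ref{prop17} says that $N^U_z-N^U_0\colon H^s(\partial U)\to L^2(\partial U)$ is compact; composing it on the right with the bounded operator $E$ and on the left with the bounded operator $R$ preserves compactness, which is the assertion. There is essentially no obstacle here: the only two points deserving a line of justification are that $\Tilde N^U_z$ is indeed the $\Lambda$-block of $N^U_z$ applied to zero-extended data (built into the definition of the partial Dirichlet-to-Neumann operator), and that the component decomposition makes $E$ and $R$ bounded — both straightforward given the set-up preceding the statement.
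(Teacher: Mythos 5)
Your proposal is correct and matches the paper's (implicit) argument: the paper simply states that the claim follows ``in view of the relation with $N^U_z$'', i.e.\ exactly your factorization of $\Tilde N^U_z-\Tilde N^U_0$ through the zero-extension and restriction maps combined with Proposition~\ref{prop17}. You have merely written out the routine details the paper leaves to the reader.
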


We will need the following variation of Proposition \ref{prop18a}:

\begin{prop}\label{prop111} For any $s\in[\frac{1}{2},1]$ and $\psi\in H^s(\Lambda)$ one has
\[
S_{\Lambda} \Tilde N^U_0\psi=\Big(\frac{1}{2}-K_{\Lambda}\Big)\psi +B\psi,
\]
where $B:H^s(\Lambda)\to H^s(\Lambda)$ is a compact operator.
\end{prop}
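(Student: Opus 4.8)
The plan is to deduce Proposition~\ref{prop111} from the already-established identity \eqref{ident1} for the \emph{full} Dirichlet-to-Neumann map, exploiting that $\Lambda$, being a connected component of $\partial U$, is an open-and-closed subset; consequently extension by zero $H^s(\Lambda)\ni\psi\mapsto(\psi,0)\in H^s(\partial U)$ is bounded for every $s$, and likewise $\gamma^{\partial U}_{D/N}f=(\gamma^\Lambda_{D/N}f,\gamma^{\Lambda_0}_{D/N}f)$ respects the splitting $\partial U=\Lambda\sqcup\Lambda_0$.

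First I would fix $s\in[\tfrac12,1]$ and $\psi\in H^s(\Lambda)$, set $f:=P^U_0(\psi,0)\in H^{s+\frac12}_\Delta(U)$, and note that by the definition of the partial Dirichlet-to-Neumann operator together with Proposition~\ref{prop12} (applicable since $s+\tfrac12\in[1,\tfrac32]$) one has
\[
N^U_0(\psi,0)=\gamma^{\partial U}_N f=(\Tilde N^U_0\psi,\eta),\qquad \eta:=\gamma^{\Lambda_0}_N f\in H^{s-1}(\Lambda_0),
\]
with $\psi\mapsto\eta$ bounded from $H^s(\Lambda)$ to $H^{s-1}(\Lambda_0)$. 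Since $(\psi,0)\in H^s(\partial U)\subset H^{\frac12}(\partial U)$, Proposition~\ref{prop18a} gives $S_{\partial U}N^U_0(\psi,0)=(\tfrac12-K_{\partial U})(\psi,0)$. Now I would restrict this identity to $\Lambda$: writing the integral operators in block form with respect to $\partial U=\Lambda\sqcup\Lambda_0$, for $x\in\Lambda$ one has $(S_{\partial U}(g_\Lambda,g_{\Lambda_0}))(x)=(S_\Lambda g_\Lambda)(x)+(Mg_{\Lambda_0})(x)$ and $(K_{\partial U}(g_\Lambda,g_{\Lambda_0}))(x)=(K_\Lambda g_\Lambda)(x)+(M'g_{\Lambda_0})(x)$, where $M$, $M'$ are the integral operators from $\Lambda_0$ to $\Lambda$ with kernels $\Phi(x-y)$, respectively $\langle\nu(y),y-x\rangle/(\sigma_n|x-y|^n)$. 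As $\Lambda$ and $\Lambda_0$ are disjoint compact sets, they are at a positive distance, so these kernels are $C^\infty$ on $\Lambda\times\Lambda_0$; hence $M$ and $M'$ are smoothing, mapping $H^t(\Lambda_0)$ boundedly into $C^\infty(\Lambda)$ for every $t\in\RR$. Restricting the displayed identity to $\Lambda$ and using that the second component of $(\psi,0)$ is $0$ yields
\[
S_\Lambda\Tilde N^U_0\psi=\Big(\tfrac12-K_\Lambda\Big)\psi-M\eta,
\]
so one takes $B\psi:=-M\eta=-M\gamma^{\Lambda_0}_N P^U_0(\psi,0)$.

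The compactness of $B:H^s(\Lambda)\to H^s(\Lambda)$ then follows since it factors as the bounded map $\psi\mapsto\eta\in H^{s-1}(\Lambda_0)$ followed by $M:H^{s-1}(\Lambda_0)\to C^\infty(\Lambda)\subset H^{s+1}(\Lambda)$, and the embedding $H^{s+1}(\Lambda)\hookrightarrow H^s(\Lambda)$ is compact. I expect the only point requiring care to be the block decomposition of the boundary integral operators and the verification that the off-diagonal kernels are smooth — which rests exactly on $\mathrm{dist}(\Lambda,\Lambda_0)>0$ — together with the regularity bookkeeping, namely that $\eta$ genuinely lies in $H^{s-1}(\Lambda_0)$ (this needs $s\le 1$ for the Neumann trace in Proposition~\ref{prop12} and $s\ge\tfrac12$ for $f\in H^{s+\frac12}_\Delta(U)$, which is why the range is $[\tfrac12,1]$). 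No deeper obstacle is anticipated; as an alternative to invoking \eqref{ident1} as a black box, one could instead repeat verbatim the Green's-third-identity argument from the proof of Proposition~\ref{prop18a} applied to $f=P^U_0(\psi,0)$, which reduces to the same computation.
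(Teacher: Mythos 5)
Your reduction is the same as the paper's: apply \eqref{ident1} to $(\psi,0)$, split $\partial U=\Lambda\cup\Lambda_0$ into blocks, and identify $B\psi=-S_{12}N_{21}\psi$ with $N_{21}:H^s(\Lambda)\to H^{s-1}(\Lambda_0)$ bounded and $S_{12}$ the off-diagonal single-layer block. Up to that point the argument matches the paper.

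The gap is in the compactness step. You claim that, since $\mathrm{dist}(\Lambda,\Lambda_0)>0$, the kernel $\Phi(x-y)$ is $C^\infty$ on $\Lambda\times\Lambda_0$, so that $S_{12}$ maps $H^{s-1}(\Lambda_0)$ into $C^\infty(\Lambda)\subset H^{s+1}(\Lambda)$ and compactness follows from the embedding $H^{s+1}(\Lambda)\hookrightarrow H^s(\Lambda)$. On a merely Lipschitz hypersurface this does not make sense: Sobolev spaces $H^r(\Lambda)$ are only defined for $|r|\le 1$, and there is no intrinsic notion of $C^\infty(\Lambda)$ giving membership in spaces of order above $1$; for $s\in[\tfrac12,1]$ you would need $H^{s+1}(\Lambda)$ with $s+1\in[\tfrac32,2]$, which is unavailable. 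Even though $S_{12}g$ is the restriction to $\Lambda$ of a function harmonic (hence smooth) near $\Lambda$, such a restriction is in general only Lipschitz, i.e.\ $H^1(\Lambda)$, in the intrinsic sense; the kernel, read through Lipschitz charts, is likewise only Lipschitz. One could salvage $s\in[\tfrac12,1)$ by proving boundedness $S_{12}:H^{s-1}(\Lambda_0)\to H^1(\Lambda)$ and using the compact embedding $H^1(\Lambda)\hookrightarrow H^s(\Lambda)$, but at the endpoint $s=1$ (which the statement includes and which is needed later, e.g.\ for Theorem \ref{corself} with $s=\tfrac32$) this gives nothing, since $H^1\hookrightarrow H^1$ is not compact. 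This is precisely why the paper proves compactness of $S_{12}:L^2(\Lambda_0)\to H^1(\Lambda)$ directly (Lemma \ref{lemcomp}: localize, differentiate the Lipschitz kernel, Hilbert--Schmidt), then passes to $S_{12}:H^{-1}(\Lambda_0)\to L^2(\Lambda)$ by duality and interpolates compactness to get $S_{12}:H^{s-1}(\Lambda_0)\to H^s(\Lambda)$ compact for all $s\in[\tfrac12,1]$. You need this (or an equivalent argument) to close the proof; the ``smoothing kernel'' shortcut only works when $\Lambda$ is smooth.
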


\begin{proof}
The operators $K_{\partial U}$ and $S_{\partial U}$ are decomposed as
\[
K_{\partial U}=\begin{pmatrix}
	K_\Lambda & K_{12}\\
	K_{21} & K_{\Lambda_0}
\end{pmatrix}
\quad
S_{\partial U}=\begin{pmatrix}
	S_\Lambda & S_{12}\\
	S_{21} & S_{\Lambda_0}
\end{pmatrix},
\quad
N^{\partial U}_0=\begin{pmatrix}
	\Tilde N^U_0 & N_{12}\\
	N_{21} & N_{22}
\end{pmatrix},
\]
so taking the upper left corner in  \eqref{ident1} gives
\[
S_{\Lambda} \Tilde N^U_0 +S_{12} N_{21}=\frac{1}{2}-K_{\Lambda},
\]
where by construction
\[
S_{12}:H^{-s}(\Lambda_0)\to H^{1-s}(\Lambda),\qquad
N_{21}:H^s(\Lambda)\to H^{s-1}(\Lambda_0)
\]
are bounded operators. The operator $S_{12}: L^2(\Lambda_0)\to H^1(\Lambda)$
has a Lipschitz integral kernel
\[
\Lambda\times\Lambda_0\ni (x,y)\mapsto \Phi(x-y)
\]
(as the diagonal singularity is excluded), so it defines a compact operator from $L^2(\Lambda_0)$ to $H^1(\Lambda)$ by the subsequent Lemma~\ref{lemcomp}. By duality, also the operator $S_{12}:H^{-1}(\Lambda_0)\to L^2(\Lambda)$ is compact, and then the real interpolation theorem for compact operators, see e.g.  \cite[Theorem 2.1]{CF},
implies that $S_{12}$ defines a compact operator $H^{1-s}(\Lambda_0)\to H^s(\Lambda)$. Therefore, $S_{12} N_{21}:H^s(\Lambda)\to H^s(\Lambda)$ is compact.
\end{proof}

\begin{lemma}\label{lemcomp}
Let $\Lambda,\Lambda_0\subset\RR^n$ be two compact  disjoint Lipschitz hypersurfaces and $F:\Lambda\times\Lambda_0\to \CC$
be a Lipschitz function. Then the operator
\[
S:L^2(\Lambda_0)\to H^1(\Lambda),\qquad
S g(x):=\int_{\Lambda_0} F(x,y) g(y)\dd s(y),
\]
is well-defined and compact.
\end{lemma}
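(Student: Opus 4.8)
The plan is to prove Lemma~\ref{lemcomp} by exhibiting $S$ as the norm-limit of finite-rank operators, using that the integral kernel $F$ is globally Lipschitz on the compact set $\Lambda\times\Lambda_0$ and that the diagonal singularity is absent since $\Lambda$ and $\Lambda_0$ are disjoint. First I would check that $S$ is indeed bounded from $L^2(\Lambda_0)$ into $H^1(\Lambda)$: for $x\in\Lambda$ the tangential gradient $\nabla_{\mathrm{tan}}^x F(x,y)$ exists for a.e.\ $x$ and is bounded (by the Lipschitz constant of $F$, uniformly in $y$), so formally $\nabla_{\mathrm{tan}}(Sg)(x)=\int_{\Lambda_0}\nabla_{\mathrm{tan}}^x F(x,y)\,g(y)\dd s(y)$, and both $Sg$ and its tangential gradient are controlled in $L^2(\Lambda)$ by $\|g\|_{L^2(\Lambda_0)}$ via Cauchy--Schwarz and the finiteness of $s(\Lambda_0)$. (One should justify differentiating under the integral sign; since $F$ is Lipschitz this is a standard dominated-convergence argument on difference quotients.)

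Next I would approximate. Since $F$ is Lipschitz on the compact metric space $\Lambda\times\Lambda_0$, for every $\eps>0$ there is a finite partition of $\Lambda_0$ into measurable pieces $\Lambda_0=\bigcup_{j=1}^{N}E_j$ of small diameter and points $y_j\in E_j$ such that $|F(x,y)-F(x,y_j)|\le\eps$ for all $x\in\Lambda$ and $y\in E_j$. Actually, to get convergence in the $H^1(\Lambda)$-operator norm (not merely $L^2$), it is cleaner to approximate $F$ uniformly, together with its tangential $x$-gradient, by a finite sum $F_\eps(x,y)=\sum_{j=1}^{N}a_j(x)\,b_j(y)$ of products of Lipschitz functions --- for instance by using a finite cover of $\Lambda_0$ subordinate to a Lipschitz partition of unity $\{b_j\}$ and setting $a_j(x)=F(x,y_j)$; then $F_\eps\to F$ uniformly and, because $F$ is Lipschitz in $x$ uniformly in $y$, one also gets the tangential derivatives close in $L^\infty$. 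The induced operator $S_\eps g(x)=\sum_j a_j(x)\int_{\Lambda_0}b_j(y)g(y)\dd s(y)$ has finite rank (at most $N$), with range spanned by the functions $a_j\in H^1(\Lambda)$.

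Then I would estimate $\|S-S_\eps\|_{L^2(\Lambda_0)\to H^1(\Lambda)}$: the kernel of $S-S_\eps$ is $F-F_\eps$, which is small in $L^\infty(\Lambda\times\Lambda_0)$, and the kernel of the tangential-gradient part is $\nabla_{\mathrm{tan}}^x(F-F_\eps)$, also small in $L^\infty$; by the same Cauchy--Schwarz bound as in the boundedness step this gives $\|S-S_\eps\|\le C\big(\|F-F_\eps\|_{L^\infty}+\|\nabla_{\mathrm{tan}}^x(F-F_\eps)\|_{L^\infty}\big)\,s(\Lambda_0)^{1/2}\to 0$. Hence $S$ is a norm-limit of finite-rank operators and is therefore compact.

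The main obstacle, and the only place requiring care, is the approximation step: one must produce finite-rank kernels $F_\eps$ that are close to $F$ \emph{together with their tangential $x$-derivatives} in the supremum norm, since mere $L^\infty$-closeness of $F-F_\eps$ only controls the $L^2(\Lambda)$-part of the $H^1$-norm. This is where the Lipschitz hypothesis on $F$ is essential: it guarantees a uniform modulus of continuity in $y$ for both $F(\cdot,y)$ and $\nabla_{\mathrm{tan}}^x F(\cdot,y)$ --- the latter in the weak/a.e.\ sense, which is enough --- so that a single finite partition of $\Lambda_0$ works simultaneously for both. An alternative that avoids differentiating $F$ explicitly is to first mollify $F$ in the $x$-variable (using local charts on $\Lambda$) to make it $C^\infty$ in $x$ with uniform control, which reduces matters to the smooth case; but the direct Lipschitz argument sketched above is shorter and self-contained.
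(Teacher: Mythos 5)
Your boundedness step is fine, but the compactness argument has a genuine gap, and it sits exactly at the place you flag as ``the only place requiring care''. You claim that the Lipschitz hypothesis on $F$ gives a uniform modulus of continuity in $y$ for the tangential $x$-gradient $\nabla^x_{\mathrm{tan}}F(\cdot,y)$, so that the finite-rank kernels $F_\eps(x,y)=\sum_j F(x,y_j)b_j(y)$ approximate $F$ \emph{together with} $\nabla^x_{\mathrm{tan}}F$ in $L^\infty$. This is false: Lipschitz continuity of $F$ only gives that the a.e.-defined gradient is \emph{bounded} by the Lipschitz constant; it gives no continuity of $\nabla_x F$ in either variable. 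For instance, a kernel of the form $F(x,y)=|x_1-\psi(y)|$ with $\psi$ Lipschitz is admissible (the disjointness of $\Lambda$ and $\Lambda_0$ does not prevent $x_1=\psi(y)$, since the obstruction is not on the diagonal $x=y$), and $\partial_{x_1}F(x,y)=\operatorname{sgn}\bigl(x_1-\psi(y)\bigr)$ jumps as $y$ varies, so $\bigl\|\nabla_x(F-F_\eps)\bigr\|_{L^\infty}$ does not become small no matter how fine the partition of $\Lambda_0$ is. Consequently your norm estimate $\|S-S_\eps\|_{L^2\to H^1}\le C\bigl(\|F-F_\eps\|_{L^\infty}+\|\nabla^x_{\mathrm{tan}}(F-F_\eps)\|_{L^\infty}\bigr)s(\Lambda_0)^{1/2}$ does not tend to zero, and the norm-limit-of-finite-rank argument collapses. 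The mollification variant you mention suffers from the same defect: mollifying in $x$ makes $\|F_\delta-F\|_{L^\infty}$ small and keeps $\nabla_xF_\delta$ bounded, but does not make $\nabla_xF_\delta-\nabla_xF$ small in $L^\infty$.

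The statement is of course still true, and the repair is to measure the gradient part in a weaker norm than $L^\infty$: since $\nabla^x_{\mathrm{tan}}F$ is bounded and $\Lambda\times\Lambda_0$ has finite measure, the operator $g\mapsto\int_{\Lambda_0}\nabla^x_{\mathrm{tan}}F(\cdot,y)g(y)\dd s(y)$ has an $L^2$ kernel and is therefore Hilbert--Schmidt, hence compact from $L^2(\Lambda_0)$ to $L^2(\Lambda)$; likewise for the zeroth-order part. This is the route the paper takes: localize with charts $\psi_j$ and a partition of unity $\chi_j$, justify $\partial_k S_j=G_{j,k}$ by dominated convergence on difference quotients (as you do), observe that each $G_{j,k}$ is Hilbert--Schmidt, and then recover compactness of $S_j$ into $H^1_0(U_j)$ by inverting the gradient map $M_j:H^1_0(U_j)\to L^2(U_j,\CC^{n-1})$, which is an isomorphism onto its range. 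Alternatively, once you know $S:L^2(\Lambda_0)\to H^1(\Lambda)$ is bounded and both $S$ and its tangential gradient are compact into $L^2(\Lambda)$, compactness into $H^1(\Lambda)$ follows by testing against weakly null sequences. Either way, the essential point you are missing is that the first-order part must be handled by a Hilbert--Schmidt (kernel-in-$L^2$) argument, not by uniform approximation of the gradient kernel, because Lipschitz regularity does not control the oscillation of the gradient.
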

\begin{proof}
Let $\big\{(U_j,V_j,\psi_j)\big\}_{j\in\{1,\dots,N\}}$, $N\in\NN$, be an atlas on $\Lambda$ consisting of local charts \[
\psi_j:\RR^{n-1}\supset U_j\to V_j\subset \Lambda.
\]
Let $\chi_j\in C^{\infty}_c(V_j)$ form a subordinated partition of unity ($\chi_1+\dots+\chi_N=1$)
on $\Lambda$. Then an equivalent norm on $H^1(\Lambda)$ is given by
\[
H^1(\Lambda)\ni g\mapsto \sum_{j=1}^N \big\|(\chi_j g)\circ \psi_j\big\|_{H^1_0(U_j)}.
\]
Representing
\begin{align*}
(S g)\big(\psi_j(u)\big)&=\sum_{j=1}^N \int_{\Lambda_0} F_j(u,y) g(y)\dd s(y),\\
F_j(u,y)&:=\chi_j\big(\psi_j(u)\big)F\big(\psi_j(u),y\big),
\end{align*}
we conclude that it is sufficient to show that each of the operators $S_j$,
\[
S_j:\ L^2(\Lambda_0)\ni g\mapsto \int_{\Lambda_0} F_j(u,y) g(y)\dd s(y)\in L^2(U_j)
\]
defines a compact operator from $L^2(\Lambda)$ to $H^1_0(U_j)$.

Let $j\in\{1,\dots,N\}$ be fixed. As $F_j$ is Lipschitz by construction, for each $k\in\{1,\dots,n-1\}$ the function
$\partial_{u_k} F_j$ is bounded. In particular, the integral operators
\[
G_{j,k}:\ L^2(\Lambda_0)\ni g\mapsto \int_{\Lambda_0} \partial_{u_k} F_j(u,y) g(y)\dd s(y)\in L^2(U_j)
\]
are Hilbert-Schmidt, therefore, compact. The dominated convergence shows the identity $\partial_k S_j=G_{j,k}$.

Now consider the operator
\[
M_j: H^{1}_0(U_j)\ni f\mapsto \nabla f \in L^2(U_j,\CC^{n-1}),
\]
where $\nabla$ is the Euclidian gradient. The operator $M_j$ is an isomorphism between $H^1_0(U_j)$
and the closed subspace $\ran M_j$ in $L^2(U_j,\CC^{n-1})$, so the inverse
\[
M_j^{-1}:\ran M_j\to H^1_0(U_j)
\]
is bounded. We now consider the operator
\[
G_j:L^2(\Lambda_0)\ni g\mapsto \begin{pmatrix}
	G_{j,1}\\ \vdots \\ G_{j,n-1}
	\end{pmatrix}
	\equiv
	\begin{pmatrix}
		\partial_1 S_j\\ \vdots \\ \partial_{n-1}S_j
	\end{pmatrix}
	\in \ran M_j,
\]
which is compact by the above considerations, then
\[
S_j=M_j^{-1} G_j:\ L^2(\Lambda)\to H^1_0(U_j)
\]
is a compact operator being a composition of a compact operator with a bounded operator.
\end{proof}

\section{Transmission problem for Laplacians}\label{sec3}

Let us consider the geometric configuration described in the introduction, see Fig.~\ref{fig1}.
Denote by $\nu$ the unit normal vector on $\Sigma$, exterior with respect to $\Omega_-$,
and by $\nu_0$ the exterior unit normal on $\partial\Omega$. Consider the Dirichlet traces
\[
\gamma^\pm_D:\ H^1(\Omega_\pm)\to L^2(\Sigma), \quad \gamma^\partial_D:\ H^1(\Omega_+)\to L^2(\partial\Omega)
\]
defined by
\begin{align*}
    \gamma^\pm_D f:=(f_\pm)|_{\Sigma}\quad\text{and}\quad \gamma^\partial_D f:=(f_+)|_{\partial\Omega} \quad\text{ for } f\in H^1(\Omega).
\end{align*}
Similarly, we let 
\begin{align*}
\gamma^\pm_N:\ H^{s}_{\Delta}(\Omega_\pm)\to H^{s-\frac{3}{2}}(\Sigma) \text{ for each } s\in [1,\tfrac{3}{2}],
\end{align*}
be the Neumann traces defined first by 
\begin{align*}
    \gamma^\pm_N f:= \mp\langle \nu, (\nabla f_\pm)|_{\Sigma} \rangle  \quad\text{ for } f_\pm\in C^{\infty}(\overline{\Omega_\pm}).
\end{align*}
and extended by continuity for any $f_\pm\in H^{\frac{3}{2}}_{\Delta}(\Omega_\pm)$.

In addition, let $s\in[1,\frac{3}{2}]$ and $\mu\in \RR\setminus\{0,1\}$. Consider the following linear operator $A_{(s)}$ in $L^2(\Omega)$:
\begin{align*}
\dom A_{(s)}=\big\{&
f=(f_+,f_-)\in H^s_\Delta(\Omega_+)\oplus H^s_\Delta(\Omega_-)\simeq H^s_\Delta(\Omega\setminus\Sigma): \\
&\gamma_D^{\partial}f_+=0 \text{ on } \partial\Omega,\ 
\gamma_D^-f_-=\gamma_D^+f_+ \text{ on } \Sigma,\\
&\mu\gamma_{N}^-f_-+\gamma_{N}^+f_+=0 \text{ on } \Sigma
\big\},\\
A_{(s)}:&\quad (f_+,f_-)\mapsto (-\Delta f_+,-\mu\Delta f_-).
\end{align*}
one has obviously $C^\infty_c(\Omega\setminus\Sigma)\subset \dom A_{(s)}$, so $A_{(s)}$ is densely defined.
A direct application of the Green formula (Proposition \ref{prop12}) shows that $A_{(s)}$ is a symmetric operator in $L^2(\Omega)$.
We are going to find several conditions guaranteeing its self-adjoitnness. Remark that for $\mu=1$ the operator $A_{(\frac{3}{2})}$ is well-defined as well, but coincides with the Dirichlet Laplacian in $\Omega$, so we exclude this trivial case from the very beginning. We emphasize on the fact that no assumption on the sign of $\mu$ is made.

Let $-\Delta_\pm$ be the Dirichlet Laplacian in $L^2(\Omega_\pm)$ and denote
\[
B:=(-\Delta_+)\oplus(-\mu\Delta_-),
\]
which is a self-adjoint operator with compact resolvent in $L^2(\Omega)$.
Remark that both $A_{(s)}$ and $B$ are restrictions of the following linear operator $T$ in $L^2(\Omega)$:
\begin{align*}
	\dom T_{(s)}&=\Big\{
	f=(f_+,f_-)\in H^s_\Delta(\Omega_+)\oplus H^s_\Delta(\Omega_-)\simeq H^s_\Delta(\Omega\setminus\Sigma): \\
	&\qquad \gamma_D^{\partial}f_+=0 \text{ on } \partial\Omega,\ 
	\gamma_D^-f_-=\gamma_D^+f_+ \text{ on } \Sigma
	\Big\},\\
	T_{(s)}:&\quad (f_+,f_-)\mapsto (-\Delta f_+,-\mu\Delta f_-).
\end{align*}
Consider the symmetric linear operator $\Theta$ in $L^2(\Sigma)$,
\[
\dom \Theta=H^{s-\frac{1}{2}}(\Sigma), \quad \Theta =\Tilde N^+_0+\mu N^-_0,
\quad
\Tilde N^+_0: \ H^{s-\frac{1}{2}}(\Sigma)\ni \varphi\mapsto N^+_0(\varphi,0)|_\Sigma.
\]
and the linear maps $\Gamma_0^{(s)},\Gamma_1^{(s)}: \dom T_{(s)}\to H^{s-\frac{3}{2}}(\Sigma)$ given by
\[
\Gamma_0^{(s)} f:=\gamma_D^-f_-\equiv \gamma_D^+f_+,
\quad
\Gamma_1^{(s)} f:=-(\mu\gamma_{N}^-f_-+\gamma_{N}^+f_+)+\Theta_{(s)} \Gamma_0^{(s)} f.
\]
Due to the results of Section \ref{secint} we have
\[
\ran\Gamma_0^{(s)}=H^{s-\frac{1}{2}}(\Sigma),\quad \ran \Gamma_1^{(s)}\subset H^{s-\frac{3}{2}}(\Sigma).
\]
For any $f,g\in \dom T_{(s)}$ one has, using the integration by parts,
\begin{align*}
\langle T_{(s)}f,g\rangle_{L^2(\Omega)}-\langle f,T_{(s)}g\rangle_{L^2(\Omega)}&
=\langle \Gamma_1^{(s)} f,\Gamma_0^{(s)} g\rangle_{H^{s-\frac{3}{2}}(\Sigma),H^{\frac{3}{2}-s}(\Sigma)}\\
&\quad- \langle \Gamma_0^{(s)} f,\Gamma_1^{(s)} g\rangle_{H^{s-\frac{3}{2}}(\Sigma),H^{\frac{3}{2}-s}(\Sigma)}.
\end{align*}
In addition, the domains of $A_{(s)}$ and $B$ can be represented as
\begin{align*}
\dom A_{(s)}&=\{f\in \dom T_{(s)}:\ \Gamma_0^{(s)} f\in\dom \Theta_{(s)},\ \Gamma_1^{(s)} f=\Theta \Gamma_0^{(s)} f\},
\\ \dom B&=\ker \Gamma_0^{(1)}.
\end{align*}
Remark that the above properties of $T_{(\frac{3}{2})},\Gamma_0^{(\frac{3}{2})}$ and $\Gamma_1^{(\frac{3}{2})}$ ensure that the triple $\big( L^2(\Sigma),\Gamma_0^{(\frac{3}{2})},\Gamma_1^{(\frac{3}{2})}\big)$ is a so-called
quasi boundary triple for the adjoint $S^*$ of the symmetric operator
\[
S:=T_{\ker\Gamma_0^{(\frac{3}{2})}\cap\ker\Gamma_1^{(\frac{3}{2})}},
\]
see \cite[Theorem 2.3]{bl07}; we also refer to \cite{bl12,dhm1,dhm2} for a review of various classes of boundary triples.
On the other hand, even for this particular case the assumptions for most useful conclusions are extremely difficult to check, so we need to adapt manually the central ideas of the boundary triple approach to our specific situation.
The main result is a formula relating the resolvents of $A_{(s)}$ and $B$, which we are going to state.

Let $z\in \CC\setminus\spec(B)$, then we have a direct sum decomposition
\[
\dom T_{(s)}=\dom B \dot + \ker (T_{(s)}-z),
\]
which shows that $\Gamma_0^{(s)}:\ker(T_{(s)}-z)\to \ran\Gamma_0^{(s)}$ is invertible, and we denote the inverse by $P_z$. By construction, for any $\varphi\in H^{s-\frac{1}{2}}(\Sigma)$ the function $f\equiv (f_+,f_-):=P_z\varphi$ is the unique solution to
\begin{gather*}
(-\Delta-z)f_+=0 \text{ in } \Omega_+,\qquad (-\mu \Delta-z)f_-=0 \text{ in } \Omega_-,\\
\gamma^\partial_D f_+=0, \qquad \gamma^+_D f_+=\varphi=\gamma^-_D f_-.
\end{gather*}
Due to the results of Section \ref{secint} the solution takes the form
\[
f_+:=P^+_z(\varphi,0),\quad f_-:=P^-_{\frac{z}{\mu}}\varphi,
\]
where $P^\pm_z$ are the Poisson operators for $\Omega_\pm$ and $z$ and one uses the natural identifications
\[
L^2(\partial\Omega_+)\simeq L^2(\Sigma)\oplus L^2(\partial\Omega),
\quad
H^1(\partial\Omega_+)\simeq H^1(\Sigma)\oplus H^1(\partial\Omega) \quad \text{etc.}
\]
In particular,
\[
P_z:\ H^{s-\frac{1}{2}}(\Sigma)\to H^s_\Delta(\Omega\setminus\Sigma),\quad s\in[1,\tfrac{3}{2}].
\]
is bounded.

We will also be interested in the operator (Weyl function)
\[
M_z:=\Gamma_1 P_z:H^{s-\frac{1}{2}}(\Sigma)\to H^{s-\frac{3}{2}}(\Sigma),
\]
which takes the form
\begin{gather*}
M_z=\Theta-(\Tilde N^+_z+\mu N^-_{\frac{z}{\mu}})\equiv (\Tilde N^+_0-\Tilde N^+_z)+\mu (N^-_0-N^-_{\frac{z}{\mu}}),\\
\quad\Tilde N^+_z:  H^{s-\frac{1}{2}}(\Sigma)\ni \varphi\mapsto N^+_z(\varphi,0)|_\Sigma,
\end{gather*}
and is a compact operator from $H^{s-\frac{1}{2}}(\Sigma)$ to $L^2(\Sigma)$ by Proposition \ref{prop17}.

 The following Theorem gives us some key tools for proving the self-adjointness of $A_{(s)}$. We recall that for the case $s=\frac{3}{2}$ the results follow from abstract results on quasi boundary triples \cite[Theorem 2.8]{bl07}.

\begin{theorem}\label{propaa}Let $s\in[1,\frac{3}{2}]$ be fixed, then the following hold:
\begin{itemize}
    \item[(a)] For any  $z\in \CC\setminus\spec(B)$ one has the equality
		\[
		\ker(A_{(s)}-z)=P_z \Big(\big\{ \psi\in H^{s-\frac{1}{2}}(\Sigma): g\in(\ker(\Theta-M_z) \big\}\Big).
		\]
		In particular,  $\Theta-M_z:H^{s-\frac{1}{2}}(\Sigma)\to H^{s-\frac{3}{2}}(\Sigma)$ is injective for all non-real $z$, as $A_{(s)}$ is symmetric.
		
		\item[(b)] Let $z\in \CC\setminus\spec(B)$ and $\Theta-M_z:H^{s-\frac{1}{2}}(\Sigma)\to H^{s-\frac{3}{2}}(\Sigma)$ is injective, and let $P^*_{\Bar z}:L^2(\Omega)\to L^2(\Sigma)$ be the adjoint of $P_{\Bar z}:L^2(\Sigma)\to L^2(\Omega)$.
		
		If $f\in L^2(\Omega)$ satisfies $P^*_{\Bar{z}}f\in (\Theta-M_z)\big(H^{s-\frac{1}{2}}(\Sigma)\big)$, then there holds $f\in\ran (A_{(s)}-z)$ and 
		\begin{align}\label{resfor}
			(A_{(s)}-z)^{-1}f= (B-z)^{-1}f +P_z (\Theta-M_z)^{-1}P^*_{\Bar{z}}f.
		\end{align}
  \end{itemize}
\end{theorem}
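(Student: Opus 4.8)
For part (a) I would simply unwind the definitions, using the direct sum $\dom T_{(s)}=\dom B\dot+\ker(T_{(s)}-z)$ and the operators $P_z$ and $M_z=\Gamma_1^{(s)}P_z$. If $f\in\ker(A_{(s)}-z)$, then $f\in\dom T_{(s)}$ with $(T_{(s)}-z)f=(A_{(s)}-z)f=0$, so $f\in\ker(T_{(s)}-z)$ and hence $f=P_z\varphi$ with $\varphi:=\Gamma_0^{(s)}f\in H^{s-\frac{1}{2}}(\Sigma)$; the membership $f\in\dom A_{(s)}$ then reads $\Theta\varphi=\Gamma_1^{(s)}f=\Gamma_1^{(s)}P_z\varphi=M_z\varphi$, i.e. $\varphi\in\ker(\Theta-M_z)$. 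Conversely, for $\varphi\in\ker(\Theta-M_z)\subset H^{s-\frac{1}{2}}(\Sigma)$ the element $f:=P_z\varphi\in\ker(T_{(s)}-z)$ satisfies $\Gamma_0^{(s)}f=\varphi\in\dom\Theta_{(s)}$ and $\Gamma_1^{(s)}f=M_z\varphi=\Theta\varphi=\Theta\Gamma_0^{(s)}f$, so $f\in\dom A_{(s)}$ and $(A_{(s)}-z)f=0$; this gives the stated equality. The addendum follows since $A_{(s)}$ is symmetric, hence $\ker(A_{(s)}-z)=\{0\}$ for every non-real $z$ (which lies outside $\spec B$ because $B$ is self-adjoint), and since $P_z$ is injective this forces $\ker(\Theta-M_z)=\{0\}$.

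The crucial step for part (b) is the identity $\Gamma_1^{(s)}(B-z)^{-1}=P^*_{\bar z}$, viewed as bounded operators $L^2(\Omega)\to L^2(\Sigma)$. Indeed, for $f\in L^2(\Omega)$ one has $v:=(B-z)^{-1}f\in\dom B\subset H^{\frac{3}{2}}_\Delta(\Omega\setminus\Sigma)$ with $\Gamma_0^{(s)}v=0$, so $\Gamma_1^{(s)}v=-\mu\gamma_N^-v_--\gamma_N^+v_+$ belongs to $L^2(\Sigma)$. To identify it, I would apply the integration-by-parts identity for the triple $(T_{(s)},\Gamma_0^{(s)},\Gamma_1^{(s)})$ to $v$ and $g:=P_{\bar z}\eta$ with $\eta\in H^{s-\frac{1}{2}}(\Sigma)$, so that $g\in\ker(T_{(s)}-\bar z)\subset\dom T_{(s)}$ with $\Gamma_0^{(s)}g=\eta$; using $T_{(s)}v=Bv=f+zv$, $T_{(s)}g=\bar z g$ and $\Gamma_0^{(s)}v=0$, and noting that the duality pairing with $\Gamma_1^{(s)}v\in L^2(\Sigma)$ reduces here to the $L^2(\Sigma)$-scalar product, this collapses to
\[
\langle f,P_{\bar z}\eta\rangle_{L^2(\Omega)}=\big\langle\Gamma_1^{(s)}(B-z)^{-1}f,\eta\big\rangle_{L^2(\Sigma)}.
\]
Since also $\langle f,P_{\bar z}\eta\rangle_{L^2(\Omega)}=\langle P^*_{\bar z}f,\eta\rangle_{L^2(\Sigma)}$ by definition of $P^*_{\bar z}$, and $H^{s-\frac{1}{2}}(\Sigma)$ is dense in $L^2(\Sigma)$ while both $\Gamma_1^{(s)}(B-z)^{-1}f$ and $P^*_{\bar z}f$ lie in $L^2(\Sigma)$, the identity follows.

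Now fix $f\in L^2(\Omega)$ with $P^*_{\bar z}f\in(\Theta-M_z)\big(H^{s-\frac{1}{2}}(\Sigma)\big)$, set $\psi:=(\Theta-M_z)^{-1}P^*_{\bar z}f\in H^{s-\frac{1}{2}}(\Sigma)$ (well defined by the injectivity hypothesis) and $u:=(B-z)^{-1}f+P_z\psi$. Then $u\in\dom T_{(s)}$, as a sum of an element of $\dom B$ and one of $\ker(T_{(s)}-z)$, and $(T_{(s)}-z)u=f$. Moreover $\Gamma_0^{(s)}u=\psi\in H^{s-\frac{1}{2}}(\Sigma)=\dom\Theta_{(s)}$, while by the key identity and the relation $(\Theta-M_z)\psi=P^*_{\bar z}f$,
\[
\Gamma_1^{(s)}u=P^*_{\bar z}f+M_z\psi=(\Theta-M_z)\psi+M_z\psi=\Theta\psi=\Theta\Gamma_0^{(s)}u,
\]
hence $u\in\dom A_{(s)}$ and $(A_{(s)}-z)u=(T_{(s)}-z)u=f$, so $f\in\ran(A_{(s)}-z)$. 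Finally, part (a) gives $\ker(A_{(s)}-z)=P_z\ker(\Theta-M_z)=\{0\}$, so $A_{(s)}-z$ is injective and $u=(A_{(s)}-z)^{-1}f$, which is precisely \eqref{resfor}.

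The main obstacle is the key identity $\Gamma_1^{(s)}(B-z)^{-1}=P^*_{\bar z}$: one has to keep careful track of which realisation of the Poisson operator $P_{\bar z}$ is used — the one acting from $H^{s-\frac{1}{2}}(\Sigma)$ into $H^s_\Delta(\Omega\setminus\Sigma)$, versus its bounded extension $L^2(\Sigma)\to L^2(\Omega)$ whose adjoint is $P^*_{\bar z}$ — and of the various duality pairings, and to verify that $P_{\bar z}\eta\in\dom T_{(s)}$ with $\Gamma_0^{(s)}P_{\bar z}\eta=\eta$ for $\eta\in H^{s-\frac{1}{2}}(\Sigma)$ so that the Green identity of Proposition~\ref{prop12} applies, together with the fact that the Neumann traces entering $\Gamma_1^{(s)}$ restricted to $\dom B$ yield genuine $L^2(\Sigma)$-functions. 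Everything else is routine bookkeeping with the decomposition $\dom T_{(s)}=\dom B\dot+\ker(T_{(s)}-z)$ and the definitions of $P_z$, $M_z$ and $\Theta$.
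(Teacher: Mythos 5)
Your proposal is correct and follows essentially the paper's own argument: part (a) is the same identification $\ker(A_{(s)}-z)=P_z\ker(\Theta-M_z)$ via the decomposition $\dom T_{(s)}=\dom B\dot+\ker(T_{(s)}-z)$, and in part (b) your key identity $\Gamma_1^{(s)}(B-z)^{-1}=P^*_{\bar z}$ is exactly the formula $P^*_{\bar z}=-\big(\gamma_N^+(-\Delta_+-z)^{-1}+\mu\gamma_N^-(-\mu\Delta_--z)^{-1}\big)$ that the paper takes from Lemma \ref{lemma5} and then uses to verify the transmission condition for $g=(B-z)^{-1}f+P_z(\Theta-M_z)^{-1}P^*_{\bar z}f$ componentwise. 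Your re-derivation of that identity through the Green formula for $(T_{(s)},\Gamma_0^{(s)},\Gamma_1^{(s)})$, and your explicit use of injectivity to pass from $(A_{(s)}-z)g=f$ to \eqref{resfor}, are only minor reorganizations of the same computation.
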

\begin{proof} Fix $s\in[1,\frac{3}{2}]$, let $z\in \CC\setminus\spec(B)$ and $\mu\neq0$.

     (i) By definition of $\Theta-M_z$ it is enough to show that $u\in\mathit{H}^{s}_{\Delta}(\Omega\setminus\Sigma)\setminus\{0\}$ satisfies
	\begin{equation}\label{Sys}
		\left\{
		\begin{aligned}
			(-\Delta-z)u_+&=0, \quad \text{ in } \Omega_+,\\
			(-\mu\Delta-z)u_-&=0, \quad \text{ in } \Omega_-,\\
			\gamma^{\partial}_D u_+&= 0, \quad \text{ on } \partial\Omega,
		\end{aligned}
		\right.
	\end{equation}
	if and only if there is $\psi\in \mathit{H}^{s-\frac{1}{2}}(\Sigma)$ such that $u=P_z\psi$.
 
 So let  $\psi\in H^{s-\frac{1}{2}}(\Sigma)\setminus\{ 0\}$. Then it is clear that the function $u=P_z\psi$ belongs to $H^{s}_{\Delta}(\Omega\setminus\Sigma)$ and satisfies the system \eqref{Sys}. Conversely, let $u\in H^{s}_{\Delta}(\Omega\setminus\Sigma)\setminus\{0\}$ be a solution of \eqref{Sys} and set $\psi=\gamma_D^- u_-$. Note that $\psi\neq0$ as otherwise $z$ would belong to the spectrum of $B$. Since $\gamma _D^+P^+_z(\psi,0)=\gamma _D^-P^-_{\frac{\mu}{z}}\psi= \psi=\gamma_D^- u_-$, Lemma \ref{lemma5} implies that $u=P_z\psi$ and this completes the proof of (i).

 (ii) Assume that $\Theta-M_z:H^{s-\frac{1}{2}}(\Sigma)\to H^{s-\frac{3}{2}}(\Sigma)$ is injective.
   Recall that by Lemma \ref{lemma5} there holds 
 \[
 P^*_{\Bar{z}}= -\Big(\gamma^{+}_N(-\Delta_+-z)^{-1}+\mu \gamma_{N}^{_-}(-\mu\Delta_{-}-z)^{-1}\Big) .
 \]

  Let $f\in L^2(\Omega)$ with $P^*_{\Bar{z}}f\in\ran (\Theta-M_z)$, then  $(\Theta-M_z)^{-1}P^*_{\Bar{z}}f$ is well defined and belongs to $H^{s-\frac{1}{2}}(\Sigma)$. Define the function 
 \[
 g= (B-z)^{-1}f +P_z (\Theta-M_z)^{-1}P^*_{\Bar{z}}f.
 \]

 From Lemma \ref{lemma5} one has $g\in H^s(\Omega\setminus\Sigma)$. Moreover, we have
\begin{align*}
\gamma_{N}^{_-}g_-&=  \gamma_{N}^{_-}(-\mu\Delta_{-}-z)^{-1}f_- + N^-_{-\frac{z}{\mu}}(\Theta-M_z)^{-1}P^*_{\Bar{z}}f,\\
\gamma_{N}^{+}g_+&=  \gamma_{N}^{+}(-\Delta_{+}-z)^{-1}f_+ +\Tilde{N}^+_{z}(\Theta-M_z)^{-1}P^*_{\Bar{z}}f.
\end{align*}
Thus, by definition of $\Theta$ and $M_z$  we obtain
\begin{align*}
    \gamma_{N}^{+}g_+  + \mu\gamma_{N}^{-}g_-&= P^*_{\Bar{z}}f - (N^-_{z}+\mu N^+_{-\frac{z}{\mu}})(\Theta-M_z)^{-1}P^*_{\Bar{z}}f\\
    &\equiv P^*_{\Bar{z}}f - (\Theta-M_z)(\Theta-M_z)^{-1}P^*_{\Bar{z}}f =0.
\end{align*}
Similarly, one checks $\gamma_{D}^+g_+=\gamma_{D}^-g_-$ and $\gamma_{D}^{\partial\Omega}g_+=0$,
which means that $g\in\dom A_{(s)}$.

From the proof of (i) it follows that $(A_{(s)}-z)g=f$, so $f\in\ran (A_{(s)}-z)$. This completes the proof of~\eqref{resfor}.
\end{proof}

\begin{corol}\label{cormain}
	Let $s\in[1,\frac{3}{2}]$ and suppose that
	\[
	L^2(\Sigma)\subset(\Theta-M_z)\big(H^{s-\frac{1}{2}}(\Sigma)\big)
	\]
	for some $z\in\CC\setminus\RR$, then $A_{(s)}$ is self-adjoint.
\end{corol}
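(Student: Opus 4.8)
The plan is to derive the self-adjointness of $A_{(s)}$ from Theorem~\ref{propaa} by proving that $A_{(s)}-w$ is \emph{onto} $L^2(\Omega)$ for $w=z$ (the value from the hypothesis) and for $w=\bar z$, and then to invoke the elementary criterion that a symmetric operator $S$ with $\ran(S-z)=\ran(S-\bar z)=L^2(\Omega)$ for some $z\in\CC\setminus\RR$ is self-adjoint. All the analytic input is already contained in Theorem~\ref{propaa}; the only genuinely new point is symmetrising the hypothesis under $z\mapsto\bar z$.

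First I would note that $A_{(s)}$ is symmetric and $B$ is self-adjoint, so $\spec(B)\subset\RR$ and Theorem~\ref{propaa}(a) applies to every non-real parameter: in particular $\Theta-M_w\colon H^{s-\frac{1}{2}}(\Sigma)\to H^{s-\frac{3}{2}}(\Sigma)$ is injective for $w\in\{z,\bar z\}$. Then, given $f\in L^2(\Omega)$, since $P_{\bar z}\colon L^2(\Sigma)\to L^2(\Omega)$ is bounded its adjoint maps $f$ into $L^2(\Sigma)$, which by assumption lies inside $(\Theta-M_z)\big(H^{s-\frac{1}{2}}(\Sigma)\big)$; Theorem~\ref{propaa}(b) and the resolvent formula \eqref{resfor} then give $f\in\ran(A_{(s)}-z)$, and since $f$ is arbitrary, $\ran(A_{(s)}-z)=L^2(\Omega)$.

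To run the same argument with $\bar z$, I would first show that the inclusion $L^2(\Sigma)\subset(\Theta-M_z)\big(H^{s-\frac{1}{2}}(\Sigma)\big)$ automatically implies the same inclusion for $\bar z$. This should follow from a conjugation symmetry: the antilinear map $C\colon u\mapsto\bar u$ is an isometric bijection of each Sobolev space over $\Sigma$ and preserves $L^2(\Sigma)$ and $H^{s-\frac{1}{2}}(\Sigma)$; since $\Theta$ and $M_z$ are assembled from solutions of boundary value problems with the coefficients $0$, respectively $z$ (and here it matters that $\mu$ is real, so that $\frac{z}{\mu}$ conjugates to $\frac{\bar z}{\mu}$), taking complex conjugates of those problems yields $C\Theta C=\Theta$ and $CM_zC=M_{\bar z}$, hence $C(\Theta-M_z)C=\Theta-M_{\bar z}$; applying $C$ to the inclusion gives the claim. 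Repeating the previous paragraph with $\bar z$ in place of $z$ (the adjoint of $P_{\bar{\bar z}}=P_z$ again lands in $L^2(\Sigma)$) then gives $\ran(A_{(s)}-\bar z)=L^2(\Omega)$.

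Finally, since a symmetric operator satisfies $\|(A_{(s)}-w)u\|\ge|\Im w|\,\|u\|$, both $A_{(s)}-z$ and $A_{(s)}-\bar z$ are bijections onto $L^2(\Omega)$ with bounded inverse, so $z,\bar z\in\rho(A_{(s)})$; this forces $A_{(s)}$ to be closed, and a closed symmetric operator with resolvent points in both open half-planes is self-adjoint. The step I expect to require the most care is the conjugation-symmetry argument — one must be sure that $C$ acts isometrically on the fractional Sobolev spaces $H^\sigma(\Sigma)$ for the relevant $\sigma\in[s-\tfrac{3}{2},s-\tfrac{1}{2}]$ and genuinely intertwines the Poisson operators $P^\pm_z$ and $P^\pm_{\bar z}$; if one wants to sidestep this, one can instead re-derive Theorem~\ref{propaa}(b) directly for $\bar z$, observing that $P^*_w$ and $M_w$ depend on $w$ only through the underlying boundary value problems.
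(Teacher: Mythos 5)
Your proposal is correct, and its first half is exactly the paper's argument: from $\ran P^*_{\bar z}\subset L^2(\Sigma)\subset(\Theta-M_z)\big(H^{s-\frac{1}{2}}(\Sigma)\big)$ and Theorem~\ref{propaa}(b) one gets $\ran(A_{(s)}-z)=L^2(\Omega)$, and the conclusion rests on the standard criterion that a symmetric operator whose ranges at $z$ and $\bar z$ are all of $L^2(\Omega)$ is self-adjoint. Where you diverge is the passage to $\bar z$: the paper disposes of it in one line by observing that $A_{(s)}$ itself commutes with complex conjugation (the coefficient $\mu$ is real and the Dirichlet/transmission conditions defining $\dom A_{(s)}$ are conjugation-invariant), so conjugating $(A_{(s)}-z)u=f$ immediately gives surjectivity of $A_{(s)}-\bar z$, with no need to return to the boundary operators. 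You instead push the conjugation through the boundary layer: $C\Theta C=\Theta$, $CM_zC=M_{\bar z}$, hence the hypothesis $L^2(\Sigma)\subset(\Theta-M_z)\big(H^{s-\frac{1}{2}}(\Sigma)\big)$ transfers to $\bar z$, and Theorem~\ref{propaa}(b) is applied a second time. This is valid — the intertwining does hold, since conjugating the boundary value problems gives $CP^U_zC=P^U_{\bar z}$, the traces commute with $C$, and $C$ preserves the fractional Sobolev spaces on $\Sigma$ (their norms are invariant under conjugation) — but it is precisely the extra verification you flagged as delicate, and it is the work the paper's shortcut avoids: conjugation-invariance is checked once, at the level of the operator in $L^2(\Omega)$, where it is obvious, rather than at the level of $\Theta$, $M_z$ and the Poisson operators. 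Both routes buy the same conclusion; the paper's is shorter, yours has the mild advantage of showing explicitly that the hypothesis of the corollary is itself symmetric under $z\mapsto\bar z$.
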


\begin{proof}
The assumption ensures
\[
\ran P^*_{\Bar z}\subset L^2(\Sigma)\subset(\Theta-M_z)\big(H^{s-\frac{1}{2}}(\Sigma)\big),
\]
and by Theorem \ref{propaa} one obtains $\ran(A_{(s)}-z)=L^2(\Omega)$.	As $A_{(s)}$ is symmetric and commutes with the comlpex conjugation, this shows the self-adjointness.	
\end{proof}

\begin{corol}\label{cormain2}
Let $s\in[1,\frac{3}{2}]$ and suppose that
\[
(\Theta-M_z)\big(H^{s-\frac{1}{2}}(\Sigma)\big)=H^{s-\frac{3}{2}}(\Sigma)
\]
for some $z\in\CC\setminus\RR$, then $A_{(s)}$ is self-adjoint.
\end{corol}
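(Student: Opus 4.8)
The plan is to deduce this directly from Corollary \ref{cormain}. The single observation required is that for $s\in[1,\frac{3}{2}]$ one has $s-\frac{3}{2}\le 0$, so that on the compact Lipschitz hypersurface $\Sigma$ there is a continuous inclusion $L^2(\Sigma)=H^0(\Sigma)\hookrightarrow H^{s-\frac{3}{2}}(\Sigma)$. Hence the hypothesis $(\Theta-M_z)\big(H^{s-\frac{1}{2}}(\Sigma)\big)=H^{s-\frac{3}{2}}(\Sigma)$ implies in particular $L^2(\Sigma)\subset(\Theta-M_z)\big(H^{s-\frac{1}{2}}(\Sigma)\big)$ for the given $z\in\CC\setminus\RR$, which is precisely the assumption of Corollary \ref{cormain}; the self-adjointness of $A_{(s)}$ follows at once.

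If one prefers to spell out the argument without citing Corollary \ref{cormain}, I would proceed as follows. Since $z$ is non-real and $A_{(s)}$ is symmetric, Theorem \ref{propaa}(a) gives the injectivity of $\Theta-M_z:H^{s-\frac{1}{2}}(\Sigma)\to H^{s-\frac{3}{2}}(\Sigma)$, so $(\Theta-M_z)^{-1}$ is defined on all of $H^{s-\frac{3}{2}}(\Sigma)$ by hypothesis. For arbitrary $f\in L^2(\Omega)$ the element $P^*_{\Bar z}f$ lies in $L^2(\Sigma)\subset H^{s-\frac{3}{2}}(\Sigma)=(\Theta-M_z)\big(H^{s-\frac{1}{2}}(\Sigma)\big)$, so Theorem \ref{propaa}(b) yields $f\in\ran(A_{(s)}-z)$ together with the resolvent formula \eqref{resfor}; thus $\ran(A_{(s)}-z)=L^2(\Omega)$. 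Because $A_{(s)}$ is symmetric and commutes with complex conjugation, the same holds with $\Bar z$ in place of $z$, and the standard criterion then forces $A_{(s)}=A_{(s)}^*$.

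There is no genuine obstacle here: the statement is merely a reformulation of Corollary \ref{cormain} through the trivial embedding $L^2(\Sigma)\subset H^{s-\frac{3}{2}}(\Sigma)$. The only point to watch is that this embedding goes in the useful direction precisely because $s\le\frac{3}{2}$; for $s>\frac{3}{2}$ the implication would break down, consistently with the standing restriction $s\in[1,\frac{3}{2}]$ maintained throughout.
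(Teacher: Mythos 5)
Your proof is correct and is essentially identical to the paper's: the paper also deduces Corollary \ref{cormain2} from Corollary \ref{cormain} via the inclusion $L^2(\Sigma)\subset H^{s-\frac{3}{2}}(\Sigma)$, valid since $s\le\frac{3}{2}$. Your optional expanded version simply re-runs the proof of Corollary \ref{cormain} through Theorem \ref{propaa}, which is fine but adds nothing beyond the one-line reduction.
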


\begin{proof}
Follows by Corollary \ref{cormain} due to the inclusion $L^2(\Sigma)\subset H^{s-\frac{3}{2}}(\Sigma)$.
\end{proof}

\section{Sufficient conditions for the self-adjointness}

Our main tool to verify the assumption of Corollary \ref{cormain} is:
 
\begin{theorem}\label{thm22} Let $s\in[ \frac{1}{2},1]$ and $\mu\in\RR\setminus\{0,1\}$ be such that:
	\begin{itemize}
		\item[(a)] $K'_\Sigma-\dfrac{\mu+1}{2(\mu-1)}: H^{s-1}(\Sigma)\to H^{s-1}(\Sigma)$ is Fredholm of index $m\in\ZZ$,
		\item[(b)] $S_\Sigma:H^{s-1}(\Sigma)\to H^{s}(\Sigma)$ is Fredholm with index zero.
	\end{itemize}
Then the operator $\Theta-M_z: H^{s-\frac{1}{2}}(\Sigma)\to H^{s-\frac{3}{2}}(\Sigma)$ is Fredholm with the same index $m$ for any non-real $z$.
\end{theorem}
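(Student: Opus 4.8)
The plan is to relate the operator $\Theta-M_z$ on $\Sigma$ to the Neumann-Poincar\'e operator $K'_\Sigma$ by exploiting the identities for $\Tilde N^+_0$ and $N^-_0$ in terms of single and double layer operators. First I would observe that $\Theta-M_z = \Tilde N^+_z + \mu N^-_{z/\mu}$, which by Propositions \ref{prop17} and \ref{prop18} differs from $\Theta - M_0 = \Tilde N^+_0 + \mu N^-_0$ by a compact operator $H^{s-\frac12}(\Sigma)\to L^2(\Sigma)$; since $L^2(\Sigma)$ embeds compactly into $H^{s-\frac32}(\Sigma)$ when $s<\frac32$, and for $s=\frac32$ one handles this separately, it follows that $\Theta - M_z$ and $\Theta - M_0$ are Fredholm together with the same index for all non-real $z$. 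So it suffices to analyze $\Theta-M_0 = \Tilde N^+_0 + \mu N^-_0$, and here I may as well note the index question is $z$-independent.

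Next I would compose on the left with $S_\Sigma$. Using Proposition \ref{prop111} (applied to $U=\Omega_+$ with $\Lambda=\Sigma$, $\Lambda_0=\partial\Omega$) we get $S_\Sigma \Tilde N^+_0 \psi = (\tfrac12 - K_\Sigma)\psi + B_+\psi$ with $B_+$ compact, while for the interior domain $\Omega_-$ Proposition \ref{prop18a} gives exactly $S_\Sigma N^-_0\psi = (\tfrac12 - K_\Sigma^{\Omega_-})\psi$. The key point is that the normal $\nu$ on $\Sigma=\partial\Omega_-$ is the outer normal of $\Omega_-$ but points \emph{into} $\Omega_+$, so $K^{\Omega_-}_\Sigma = K_\Sigma$ while for $\Omega_+$ the outer normal is $-\nu$; tracking this sign carefully shows $S_\Sigma \Tilde N^+_0 = (\tfrac12 + K_\Sigma) + B_+$ up to the compact term (the $\tfrac12$ stays, the $K$ flips sign with the normal). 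Hence
\[
S_\Sigma(\Theta - M_0) = \Big(\tfrac12 + K_\Sigma\Big) + \mu\Big(\tfrac12 - K_\Sigma\Big) + B = \tfrac{\mu+1}{2} - (\mu-1)K_\Sigma + B = -(\mu-1)\Big(K_\Sigma - \tfrac{\mu+1}{2(\mu-1)}\Big) + B,
\]
with $B$ compact and $\mu-1\ne 0$. Now I would invoke the symmetrization formula \eqref{skk}, or rather its transpose, to pass from $K_\Sigma$ on $H^s(\Sigma)$ to $K'_\Sigma$ on $H^{s-1}(\Sigma)$: since $K_\Sigma$ acting on $H^s(\Sigma)$ is the adjoint (transpose) of $K'_\Sigma$ acting on $H^{-s}(\Sigma)$, and by duality Fredholmness with index $m$ of $K'_\Sigma - c$ on $H^{s-1}(\Sigma)$ is equivalent to Fredholmness with index $-m$ of $K_\Sigma - \bar c$ on $H^{1-s}(\Sigma)$ — here $c=\tfrac{\mu+1}{2(\mu-1)}$ is real — I need to match up the spaces. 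The cleanest route: hypothesis (a) says $K'_\Sigma - c$ is Fredholm index $m$ on $H^{s-1}(\Sigma)$; by transposition $K_\Sigma - c$ is Fredholm index $m$ on $H^{1-s}(\Sigma)$ (transpose preserves index up to sign depending on the real/complex convention, and since the operators here are real and we deal with real Sobolev spaces this is just index $m$ — I would state this precisely using that the transpose of a Fredholm operator is Fredholm with the negated index, combined with the fact that $(\Theta-M_z)$ being itself a transpose-type object reverses it back). Thus $S_\Sigma(\Theta-M_0)$ is Fredholm of index $m$ on $H^{s-\frac12}(\Sigma)$, once we identify $1-s$ with $s-\frac12$...

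Here is where I must be careful about the exponent bookkeeping: $\Theta - M_0$ maps $H^{s-\frac12}(\Sigma)\to H^{s-\frac32}(\Sigma)$ and $S_\Sigma: H^{s-\frac32}(\Sigma)\to H^{s-\frac12}(\Sigma)$ (shifting by $1$), so $S_\Sigma(\Theta-M_0): H^{s-\frac12}(\Sigma)\to H^{s-\frac12}(\Sigma)$. In the identity above, $K_\Sigma$ acts on $H^{s-\frac12}(\Sigma)$, and by the duality between $K_\Sigma$ and $K'_\Sigma$, its Fredholm properties on $H^{s-\frac12}(\Sigma)$ match those of $K'_\Sigma$ on $H^{-(s-\frac12)} = H^{\frac12-s}(\Sigma)$; writing $t:=s-\frac12\in[0,\frac12]$ this is $H^{-t}(\Sigma)$, i.e. with exponent $(s-\frac12)-1 = s-\frac32$, \emph{not} $s-1$. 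So I would reconcile this with hypothesis (a): in fact the intended reading is that $s$ in this theorem ranges in $[\frac12,1]$, and $K'_\Sigma$ on $H^{s-1}(\Sigma)$ corresponds to $K_\Sigma$ on $H^{1-s}(\Sigma)$; since $1-s \in [0,\frac12]$ and $s-\frac12$ also ranges in... no — one needs $1-s = s-\frac12$, i.e. $s=\frac34$, which is false in general. I therefore expect the correct argument uses the \emph{other} trace exponent: $\Theta - M_0$ as written maps $H^{s-\frac12}\to H^{s-\frac32}$, but under the theorem's renaming (a different $s$ than in Section 3) the relevant composition $S_\Sigma(\Theta-M_0)$ should act on $H^{s}(\Sigma)$ for the $s\in[\frac12,1]$ of this statement, matching $K_\Sigma$ on $H^s(\Sigma)$ hence $K'_\Sigma$ on $H^{-s}(\Sigma)$; and the duality/transpose identity then converts this to $K'_\Sigma$ on $H^{s-1}(\Sigma)$ via the pairing $H^s \times H^{1-s}$...

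\textbf{Main obstacle.} The genuinely delicate step, and the one I would spend the most care on, is precisely this index/duality bookkeeping: (i) getting the sign of $K_\Sigma$ versus $K'_\Sigma$ and the relation between $\Omega_+$-normal and $\Omega_-$-normal exactly right in the composition identity, (ii) using the symmetrization formula \eqref{skk} together with hypothesis (b) (that $S_\Sigma: H^{s-1}\to H^s$ is Fredholm index zero — used to "cancel" $S_\Sigma$ from $S_\Sigma(\Theta-M_0)$ and conclude $\Theta-M_0$ itself is Fredholm of index $m - 0 = m$), and (iii) matching Sobolev exponents so that the operator $K'_\Sigma - \tfrac{\mu+1}{2(\mu-1)}$ appears on exactly the space $H^{s-1}(\Sigma)$ claimed in hypothesis (a). Once $S_\Sigma(\Theta - M_0)$ is shown Fredholm of index $m$ and $S_\Sigma$ Fredholm of index $0$, multiplicativity of the Fredholm index gives $\Theta - M_0$ Fredholm of index $m$, hence so is $\Theta - M_z$ for all non-real $z$ by the compact-perturbation step, completing the proof. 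The compactness of $B$ combining the $B_+$ term of Proposition \ref{prop111} with the $z$-dependent compact corrections is routine; the exponent/duality accounting is where the real work lies, and I would carry it out by fixing notation once and for all and using the transpose characterization of Fredholm operators (transpose is Fredholm with negated index) twice — once to move from $K'_\Sigma$ to $K_\Sigma$ and once implicitly absorbed in the fact that $\Theta - M_0$ is built from Dirichlet-to-Neumann maps which are (up to sign and compacts) transposes of double-layer operators.
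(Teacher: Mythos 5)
Your skeleton is recognizably the same as the paper's up to a point: replace $\Theta-M_z$ by $\Theta-M_0$ modulo compacts, use Propositions \ref{prop18a} and \ref{prop111} (with the sign flip of $K_\Sigma$ coming from the orientation of $\nu$ relative to $\Omega_+$, which you track correctly) to produce $\tfrac{\mu+1}{2}-(\mu-1)K_\Sigma$ up to compact errors, then use hypothesis (b) to strip off $S_\Sigma$ and a compact perturbation to restore the $z$-dependence. But the decisive step --- converting the resulting statement about the \emph{double} layer operator $K_\Sigma$ into hypothesis (a), which concerns $K'_\Sigma$ on $H^{s-1}(\Sigma)$ --- is exactly where your argument breaks down, and you acknowledge it. Because you compose with $S_\Sigma$ on the \emph{left}, the operator $K_\Sigma-\frac{\mu+1}{2(\mu-1)}$ appears acting on the trace space of $\Theta-M_0$ itself, and the duality/transposition you then invoke pairs $H^{t}(\Sigma)$ with $H^{-t}(\Sigma)$; it therefore yields information about $K'_\Sigma$ on $H^{-t}(\Sigma)$, never on $H^{s-1}(\Sigma)$ (the two coincide only at one isolated value of $s$, as you yourself compute), and in addition transposition negates the index, which you would have to undo. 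You label this the ``main obstacle'' and leave it unresolved, so the proof is incomplete at its central point; the trailing speculation about a renaming of $s$ does not repair it.

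The paper avoids duality entirely by composing with $S_\Sigma$ on the \emph{right}. With $T$ an approximate inverse of $S_\Sigma$ (hypothesis (b)), Propositions \ref{prop18a} and \ref{prop111} give $N^-_0=T(\tfrac12-K_\Sigma)+\text{compact}$ and $\Tilde N^+_0=T(\tfrac12+K_\Sigma)+\text{compact}$, hence
\[
(\Theta-M_z)S_\Sigma=(1-\mu)\,T\Big(K_\Sigma-\tfrac{\mu+1}{2(\mu-1)}\Big)S_\Sigma+\text{compact},
\]
and the symmetrization formula \eqref{skk}, $K_\Sigma S_\Sigma=S_\Sigma K'_\Sigma$, lets $TS_\Sigma\approx 1$ cancel, so that $K'_\Sigma-\frac{\mu+1}{2(\mu-1)}$ appears acting \emph{exactly} on $H^{s-1}(\Sigma)$, where (a) applies; one then passes from $(\Theta-M_z)S_\Sigma$ back to $\Theta-M_z$ using $S_\Sigma T\approx1$. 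If you insist on your left-composed version, the missing ingredient is again \eqref{skk} used at the operator level rather than duality: from $(K_\Sigma-c)S_\Sigma=S_\Sigma(K'_\Sigma-c)$ with $c=\frac{\mu+1}{2(\mu-1)}$, hypotheses (a) and (b) make the right-hand side Fredholm of index $m$ from $H^{s-1}(\Sigma)$ to $H^{s}(\Sigma)$, and a short lemma (if $AB$ and $B$ are Fredholm, then $A$ is Fredholm and indices add --- the same lemma you would also need to ``cancel'' $S_\Sigma$ from $S_\Sigma(\Theta-M_0)$) then gives the Fredholmness of $K_\Sigma-c$ with index $m$ on the correct space. None of this is carried out in your proposal, so as written the conclusion does not follow from your argument.
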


\begin{proof} 
Fix $s\in[\frac{1}{2},1]$ and let $z\in\CC\setminus\RR$. Remark first that
\[
\Theta-M_z\equiv\Tilde N^+_z+\mu N^-_\frac{z}{\mu}\equiv \Tilde N^+_0+\mu N^-_0+B_1,
\]	
where $B_1:H^{s}(\Sigma)\to L^2(\Sigma)$ is a compact operator, see Propositions \ref{prop17} and \ref{prop18}, and
\begin{equation}
	\label{mtheta1}
\big(\Theta-M_z\big)S_\Sigma=\Tilde N^+_0 S_\Sigma+\mu N^-_0 S_\Sigma+B_2,
\end{equation}
where $B_2:H^{s-1}(\Sigma)\to L^2(\Sigma)$ is a compact operator. 

The Fredholmness of $S$ implies the existence of ``an approximate inverse'' $T:H^{s}(\Sigma)\to H^{s-1}(\Sigma)$ such that 
\begin{align}\label{fredT}
    S_\Sigma T= 1 + R_1 \quad\text{and}\quad TS_\Sigma = 1 + R_2 
\end{align}
where $R_1: H^{s}(\Sigma)\to H^s(\Sigma)$ and $R_2: H^{s-1}(\Sigma)\to H^{s-1}(\Sigma)$ are compact operators,
and $T:H^s(\Sigma)\to H^{s-1}(\Sigma)$ is automatically Fredholm with index zero.
By Propositions \ref{prop18a} and \ref{prop111} it follows that
\begin{align*}
    N^-_0=\,T\Big(\dfrac{1}{2}-K_\Sigma\Big) + B_3\quad\text{and}\quad
    \Tilde N^+_0=T\Big(\frac{1}{2}+K_\Sigma\Big) +B_4
\end{align*}
where $B_3,\, B_4:H^s(\Sigma)\to H^{s-1}(\Sigma)$ are compact operators (the minus sign in front of $K_\Sigma$ is due to the fact that the normal $\nu$ is interior with respect to $\Omega_+$). The substitution of these expressions into \eqref{mtheta1} gives
\[
\big(\Theta-M_z\big)S_\Sigma=T \Big( \frac{1}{2}+K_\Sigma + \frac{\mu}{2}-\mu K_\Sigma\Big)S_\Sigma+B_5,
\]
where $B_5:=B_2+(B_3+B_4) S_\Sigma:H^{s-1}(\Sigma)\to H^{s-1}(\Sigma)$ is a compact operator. This can be rewritten as
\[
(\Theta-M_z)S_\Sigma=(1-\mu)T \Big( K_\Sigma-\dfrac{\mu+1}{2(\mu-1)}\Big)S_\Sigma+B_5.
\]
Using the symmetrization formula \eqref{skk} and \eqref{fredT} we rewrite the above as
\begin{equation}
	\label{temp111}
\begin{aligned}
(\Theta-M_z)S_\Sigma&=(1-\mu)\Big( K'_\Sigma-\dfrac{\mu+1}{2(\mu-1)}\Big)+R_2\Big( K'_\Sigma-\dfrac{\mu+1}{2(\mu-1)}\Big)+B_5\\
    &\equiv(1-\mu)\Big( K'_\Sigma-\dfrac{\mu+1}{2(\mu-1)}\Big)+B_6,
\end{aligned}
\end{equation}
where $B_6$ is compact in $H^{s-1}(\Sigma)$.

Let $K'_\Sigma-\frac{\mu+1}{2(\mu-1)}$ be Fredholm with index $m$ in $H^{s-1}(\Sigma)$. Due to the compactness of $B_6$,
\eqref{temp111} shows that $(\Theta-M_z)S_\Sigma:H^{s-1}(\Sigma)\to H^{s-1}(\Sigma)$ is Fredholm with index $m$, and then
$(\Theta-M_z)S_\Sigma T: H^{s}(\Sigma)\to H^{s-1}(\Sigma)$ is Fredholm with index $m$ as well. By \eqref{fredT} we have
\begin{align*}
\Theta-M_z= (\Theta-M_z)S_\Sigma T - (\Theta-M_z)R_1
\end{align*}
and $(\Theta-M_z)R_1: H^s(\Sigma)\to H^{s-1}(\Sigma)$ is compact, which gives the required conclusion.
\end{proof}

For what follows, for a bounded linear operator $T$ in a Banach space $\mathcal{B}$ it will be convenient to denote
\begin{align*}
\spec_\ess(T, \mathcal{B})&:=\big\{ z\in\CC: \ T-z \text{ is not a Fredholm operator}\big\},\\
\spec^0_\ess(C,\mathcal{B})&:=\big\{ z\in\CC: \ T-z \text{ is not a zero index Fredholm operator}\big\}.
\end{align*}
On has the trivial inclusions
\[
\spec_\ess(T,\mathcal{B})\subset \spec^0_\ess(T,\mathcal{B})\subset\spec(T,\mathcal{B})
\]
and the equalities
\begin{align*}
\spec(T^*,\mathcal{B}^{\ast})&=\overline{\spec(T,\mathcal{B})},\\
\spec_\ess(T^*,\mathcal{B}^{\ast})&=\overline{\spec_\ess(T,\mathcal{B})},\\
\spec^0_\ess(T^*,\mathcal{B}^{\ast})&=\overline{\spec^0_\ess(T,\mathcal{B})}.
\end{align*}
In addition define the spectral radius $r(T,\mathcal{B})$ and the essential spectral radius $r_\ess(T,\mathcal{B})$ by
\begin{align*}
r(T,\mathcal{B})&:=\sup\big\{|\lambda|:\,\lambda\in\spec(T,\mathcal{B})\big\},\\
r_\ess(T,\mathcal{B})&:=\sup\big\{|\lambda|:\,\lambda\in\spec_\ess(T,\mathcal{B})\big\}.
\end{align*}
One has clearly
\begin{align*}
r_\ess(T,\mathcal{B})&\le r(T,\mathcal{B}),\\
r(T^*,\mathcal{B}^{\ast})&=r(T,\mathcal{B}),\\
r_\ess(T^*,\mathcal{B}^{\ast})&=r_\ess(T,\mathcal{B}).
\end{align*}
 Recall that the index of $T-\lambda$ is constant for $\lambda$ in each connected component of $\CC\setminus\spec_\ess(T,\mathcal{B})$. As the index is zero for $|\lambda|>\|T\|$, we conclude that
\[
\sigma_\ess(T,\mathcal{B})\subset\big\{\lambda\in \CC:\, |\lambda|\le r_\ess(T,\mathcal{B})\big\}.
\]

With these conventions and observations, Theorem \ref{thm22} has the following direct consequence.
\begin{theorem}\label{corself}
Let $s\in [1,\frac{3}{2}]$ and $\mu\in\RR\setminus\{0, 1\}$ such that:
\begin{itemize}
	\item[(a)] $\frac{\mu+1}{2(\mu-1)}\notin\spec^0_\ess(K'_\Sigma, H^{s-\frac{3}{2}}(\Sigma))$,
	\item[(b)] $S_\Sigma:H^{s-\frac{3}{2}}(\Sigma)\to H^{s-\frac{1}{2}}(\Sigma)$ is Fredholm with index zero.
\end{itemize}
Then $A_{(s)}$ is self-adjoint with compact resolvent.
\end{theorem}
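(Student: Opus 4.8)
The plan is to derive the statement in a few lines from Theorem~\ref{thm22}, the abstract criterion Corollary~\ref{cormain2}, the injectivity recorded in Theorem~\ref{propaa}(a), and the resolvent formula~\eqref{resfor}; no new analytic estimate is needed. First I would reparametrise: take the parameter of Theorem~\ref{thm22} to be $s-\frac{1}{2}$, which lies in $[\frac{1}{2},1]$ since $s\in[1,\frac{3}{2}]$. Then $H^{(s-\frac{1}{2})-1}(\Sigma)=H^{s-\frac{3}{2}}(\Sigma)$ and $S_\Sigma:H^{(s-\frac{1}{2})-1}(\Sigma)\to H^{s-\frac{1}{2}}(\Sigma)$, so hypotheses (a) and (b) above are exactly hypotheses (a) and (b) of Theorem~\ref{thm22} for this value of the parameter, while the assumption $\frac{\mu+1}{2(\mu-1)}\notin\spec^0_\ess\big(K'_\Sigma,H^{s-\frac{3}{2}}(\Sigma)\big)$ says precisely that the Fredholm index $m$ occurring in Theorem~\ref{thm22}(a) equals $0$. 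Hence Theorem~\ref{thm22} gives that $\Theta-M_z:H^{s-\frac{1}{2}}(\Sigma)\to H^{s-\frac{3}{2}}(\Sigma)$ is Fredholm with index zero for every $z\in\CC\setminus\RR$.

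Next I would upgrade this to bijectivity. By Theorem~\ref{propaa}(a), using that $A_{(s)}$ is symmetric, the same operator $\Theta-M_z$ is injective for every non-real $z$; an index-zero Fredholm operator with trivial kernel has closed range of codimension $\dim\ker=0$ and is therefore onto. Consequently $(\Theta-M_z)\big(H^{s-\frac{1}{2}}(\Sigma)\big)=H^{s-\frac{3}{2}}(\Sigma)$ for some (in fact every) $z\in\CC\setminus\RR$, and Corollary~\ref{cormain2} then yields the self-adjointness of $A_{(s)}$ at once.

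It remains to see that the resolvent is compact. Fix $z\in\CC\setminus\RR$; note $z\notin\spec(B)$ as $B$ is self-adjoint. By the open mapping theorem $(\Theta-M_z)^{-1}:H^{s-\frac{3}{2}}(\Sigma)\to H^{s-\frac{1}{2}}(\Sigma)$ is bounded, and since $s-\frac{3}{2}\le 0$ there is a continuous inclusion $L^2(\Sigma)\hookrightarrow H^{s-\frac{3}{2}}(\Sigma)$; in particular $\ran P^*_{\bar z}\subset L^2(\Sigma)\subset(\Theta-M_z)\big(H^{s-\frac{1}{2}}(\Sigma)\big)$, so Theorem~\ref{propaa}(b) applies to every $f\in L^2(\Omega)$ and gives
\[
(A_{(s)}-z)^{-1}=(B-z)^{-1}+P_z\,(\Theta-M_z)^{-1}P^*_{\bar z}
\]
on all of $L^2(\Omega)$. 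Here $(B-z)^{-1}$ is compact because $B$ has compact resolvent; in the second term $P^*_{\bar z}:L^2(\Omega)\to L^2(\Sigma)$, $(\Theta-M_z)^{-1}:L^2(\Sigma)\to H^{s-\frac{1}{2}}(\Sigma)$ and $P_z:H^{s-\frac{1}{2}}(\Sigma)\to H^s_\Delta(\Omega\setminus\Sigma)$ are bounded, while the embedding $H^s(\Omega\setminus\Sigma)\hookrightarrow L^2(\Omega)$ is compact for $s\ge 1$ (Rellich on the bounded Lipschitz sets $\Omega_\pm$), so $P_z(\Theta-M_z)^{-1}P^*_{\bar z}$ is compact. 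Thus $(A_{(s)}-z)^{-1}$ is compact, and by the resolvent identity so is the resolvent of $A_{(s)}$ at every point of its resolvent set.

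The whole argument is essentially bookkeeping, and the only points requiring care are the index matching under the shift of the parameter by $\frac{1}{2}$ between Theorem~\ref{thm22} and the present statement, together with the small but decisive remark that injectivity promotes an index-zero Fredholm operator to a bijection. There is no genuine obstacle left here: the substantive step — the Fredholm reduction of $\Theta-M_z$ to $K'_\Sigma-\frac{\mu+1}{2(\mu-1)}$ via the single-layer symmetrisation — has already been carried out in Theorem~\ref{thm22}.
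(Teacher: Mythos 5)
Your proposal is correct and follows essentially the same route as the paper: Theorem~\ref{thm22} (with the half-shift of the parameter, which you spell out more explicitly than the paper does) combined with the injectivity from Theorem~\ref{propaa}(a) makes $\Theta-M_z$ bijective, self-adjointness then follows from Corollary~\ref{cormain2}, and compactness of the resolvent from the formula~\eqref{resfor}. The only (harmless) deviation is where you place the compactness: you use the embedding $H^s(\Omega\setminus\Sigma)\hookrightarrow L^2(\Omega)$ on the domain side, while the paper uses $H^{s-\frac{1}{2}}(\Sigma)\hookrightarrow L^2(\Sigma)$ to make $(\Theta-M_z)^{-1}:H^{s-\frac{3}{2}}(\Sigma)\to L^2(\Sigma)$ compact.
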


\begin{proof}
For any non-real $z$ the operator $\Theta-M_z:H^{s-\frac12}(\Sigma)\to H^{s-\frac32}(\Sigma)$ is Fredholm with index zero by Theorem \ref{thm22} and injective by Theorem \ref{propaa}(a). Therefore, $(\Theta-M_z)\big(H^{s-\frac12}(\Sigma)\big)=H^{s-\frac32}(\Sigma)$ for any non-real $z$,
and the self-adjointness of $A_{(s)}$ follows by Corollary \ref{cormain2}. 
	
It remains to show that $(A_{(s)}-z)^{-1}$ is compact in $L^2(\Omega)$ for at least one non-real $z$.
Let $z\in\CC\setminus\RR$. Since $\Theta-M_z:H^{s-\frac{1}{2}}(\Sigma)\to H^{s-\frac{3}{2}}(\Sigma)$ is bounded and bijective (as just shown above), the closed graph theorem shows that $(\Theta-M_z)^{-1}:H^{s-\frac{3}{2}}(\Sigma)\to H^{s-\frac{1}{2}}(\Sigma)$ is bounded.
Due to the compact embedding $H^{s-\frac{1}{2}}(\Sigma)\hookrightarrow L^2(\Sigma)$
the operator $(\Theta-M_z)^{-1}:H^{s-\frac{3}{2}}(\Sigma)\to L^2(\Sigma)$ is compact. Since
$(B-z)^{-1}:L^2(\Omega)\to L^2(\Omega)$ is compact and
\[P_z:L^2(\Sigma)\to L^2(\Omega),\quad
P^*_{\Bar z}:L^2(\Omega)\to L^2(\Sigma)
\]
are bounded, the resolvent formula in Theorem \ref{propaa} shows the compactness of $(A_{(s)}-z)^{-1}$.
\end{proof}

We remark that the second assumption (b) is much less restrictive than the other conditions (in fact, we believe that it is always satisfied, but we were unable to find a sufficiently general result for $n=2$ in the available literature).
\begin{prop}\label{propss}
	The assumption (b) in Theorem \ref{corself} is satisfied in the following cases:
	\begin{itemize}
		\item[(i)] for all $s\in[1,\tfrac32]$, if $n\ge 3$,
		\item[(i)] at least for  $s\in\{1,\tfrac32\}$, if $n=2$.
	\end{itemize}
\end{prop}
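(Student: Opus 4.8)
The plan is to analyze the single layer operator $S_\Sigma$ on the scale of Sobolev spaces and to invoke the sharpest available mapping and invertibility results from the literature on Lipschitz boundary integral operators. First I would observe that it suffices to treat the endpoint cases $s=1$ and $s=\frac32$, i.e.\ the operators $S_\Sigma:L^2(\Sigma)\to H^1(\Sigma)$ and $S_\Sigma:H^{-1/2}(\Sigma)\to H^{1/2}(\Sigma)$; any intermediate $s$ then follows by the complex interpolation theorem for Fredholm operators (the index being locally constant on the interpolation scale once the endpoints have index zero). This reduces the claim to two distinguished cases.

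For the case $s=\frac32$, i.e.\ $S_\Sigma:H^{-1/2}(\Sigma)\to H^{1/2}(\Sigma)$, I would use the classical result of Verchota \cite{verchota} (and, in the plane, the precise potential-theoretic analysis going back to Verchota and to Steinbach--Wendland): for a connected Lipschitz hypersurface $\Sigma=\partial\Omega_-$ in $\RR^n$ with $n\ge 3$, the operator $S_\Sigma:H^{-1/2}(\Sigma)\to H^{1/2}(\Sigma)$ is an isomorphism, hence Fredholm with index zero. For $n=2$ the operator $S_\Sigma$ may fail to be injective precisely when the logarithmic capacity of $\Omega_-$ equals $1$, but it is always Fredholm with index zero: one passes to a rescaled curve $t\Sigma$ for a suitable $t>0$ to exclude the exceptional capacity, uses that $S_{t\Sigma}$ is then an isomorphism, and notes that the difference $S_{t\Sigma}-S_\Sigma$ differs from $S_\Sigma$ by a rank-one (or at worst compact) perturbation coming from the $\frac{1}{2\pi}\log t$ term; Fredholmness with index zero is stable under such perturbations.

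For the case $s=1$, i.e.\ $S_\Sigma:L^2(\Sigma)\to H^1(\Sigma)$, I would again invoke \cite[Theorem 3.3 and the accompanying invertibility statements]{verchota}: on a connected Lipschitz hypersurface this operator is invertible for $n\ge 3$, and Fredholm with index zero for $n=2$ after the same capacity normalization as above. An alternative, more self-contained route for $n\ge3$ is to combine the boundedness $S_\Sigma:L^2(\Sigma)\to H^1(\Sigma)$ stated after Proposition in Section \ref{secint} with the Rellich-type estimate of Verchota giving a lower bound $\|S_\Sigma f\|_{H^1(\Sigma)}\gtrsim\|f\|_{L^2(\Sigma)}$ together with the symmetry of $S_\Sigma$ in $L^2(\Sigma)$, which yields bijectivity directly.

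The main obstacle is the planar case $n=2$: there is no fully general statement in the literature asserting index-zero Fredholmness of $S_\Sigma$ on arbitrary connected Lipschitz curves across the whole range $s\in[1,\frac32]$, so the argument must be assembled from the capacity-normalization trick plus stability of the Fredholm index under the compact (indeed finite-rank) logarithmic perturbation. Once one restricts to the two endpoint values $s\in\{1,\frac32\}$, where Verchota's theorems \cite{verchota} apply directly after rescaling, the remaining steps are routine.
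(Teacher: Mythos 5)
Two issues. First, a bookkeeping error: in assumption (b) the case $s=\tfrac32$ corresponds to $S_\Sigma:L^2(\Sigma)\to H^1(\Sigma)$ and the case $s=1$ to $S_\Sigma:H^{-1/2}(\Sigma)\to H^{1/2}(\Sigma)$; you have swapped the two. Since you end up discussing both endpoint operators anyway, this does not destroy the substance of your argument for $n=2$ (your rescaling/logarithmic-capacity argument, reducing to an invertible $S_{t\Sigma}$ plus a rank-one perturbation, is a legitimate alternative to the paper's direct citations of Theorem 4.11 in \cite{verchota} and Theorem 7.6 in \cite{Mc}), but the misidentification should be corrected.

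Second, and more seriously, your treatment of the intermediate range $s\in(1,\tfrac32)$ for $n\ge 3$ rests on a ``complex interpolation theorem for Fredholm operators'' that does not exist in the generality you need: Fredholmness (and even invertibility) at the two endpoints of an interpolation scale does not in general propagate to the intermediate spaces, and the local constancy of the index that you invoke presupposes Fredholmness on the whole interval, which is exactly what has to be proved. (Shneiberg-type results give invertibility only on a neighbourhood of a parameter where it is already known, not on all of $[1,\tfrac32]$.) The paper avoids this entirely: for $n\ge3$ it cites Theorem 8.1\,(5) of \cite{fabes}, which establishes invertibility of the single layer operator directly on the whole relevant Sobolev--Besov scale, and for $n=2$ it deliberately claims only the endpoints $s\in\{1,\tfrac32\}$ precisely because no such full-scale result is available there. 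To repair your proof you should replace the interpolation step by an appeal to \cite{fabes} (or another result valid on the full scale) for $n\ge3$, and restrict the $n=2$ discussion to the two endpoint operators.
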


\begin{proof}
	For (i) we refer to Theorem 8.1 (5) in~\cite{fabes}.
	For (ii), see Theorem 4.11 in \cite{verchota} for $s=\frac32$ and Theorem 7.6 in \cite{Mc} for $s=1$.
\end{proof}

By combining Theorem \ref{corself} with Proposition \ref{propss} we arrive at the following
main observation:

\begin{corol}\label{corself2}
Let $s\in \{1,\frac{3}{2}\}$ and $\mu\in\RR\setminus\{0, 1\}$ such that
\begin{equation}
	\label{aaa}
	\frac{\mu+1}{2(\mu-1)}\notin\spec^0_\ess\big(K'_\Sigma, H^{s-\frac{3}{2}}(\Sigma)\big),
\end{equation}	
then $A_{(s)}$ is self-adjoint with compact resolvent.
The condition \eqref{aaa} is satisfied, in particular,
	if
	\[
	\Big|\frac{\mu+1}{2(\mu-1)}\Big|> r_\ess\Big(K'_\Sigma,, H^{s-\frac{3}{2}}(\Sigma)\Big).
	\]
\end{corol}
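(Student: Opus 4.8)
The plan is to obtain the corollary by specializing Theorem~\ref{corself}, and then to verify separately that the displayed inequality implies the abstract condition~\eqref{aaa}.

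First I would note that the assumed relation $\frac{\mu+1}{2(\mu-1)}\notin\spec^0_\ess\big(K'_\Sigma, H^{s-\frac{3}{2}}(\Sigma)\big)$ is verbatim hypothesis~(a) of Theorem~\ref{corself}. To apply that theorem it remains only to secure hypothesis~(b), namely that $S_\Sigma:H^{s-\frac{3}{2}}(\Sigma)\to H^{s-\frac{1}{2}}(\Sigma)$ is Fredholm of index zero. For $n\ge 3$ this holds for every $s\in[1,\tfrac32]$ by Proposition~\ref{propss}(i), and for $n=2$ it is known precisely for $s=1$ and $s=\tfrac32$ by Proposition~\ref{propss}(ii). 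This is the only reason the statement is phrased for $s\in\{1,\tfrac32\}$ rather than the full interval $[1,\tfrac32]$. With both hypotheses of Theorem~\ref{corself} in force, that theorem yields the self-adjointness of $A_{(s)}$ together with the compactness of $(A_{(s)}-z)^{-1}$.

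For the final assertion I would argue as follows. Put $\mathcal{B}:=H^{s-\frac{3}{2}}(\Sigma)$ and write $r_\ess:=r_\ess(K'_\Sigma,\mathcal{B})$. The region $\{\lambda\in\CC:\ |\lambda|>r_\ess\}$ is connected and, by the very definition of $r_\ess$, disjoint from $\spec_\ess(K'_\Sigma,\mathcal{B})$; being a connected subset of the open set $\CC\setminus\spec_\ess(K'_\Sigma,\mathcal{B})$, it lies inside one connected component of that set. On that component the Fredholm index of $K'_\Sigma-\lambda$ is constant, and it equals zero there, because $K'_\Sigma-\lambda$ is boundedly invertible, hence Fredholm of index zero, for every $\lambda$ with $|\lambda|>\|K'_\Sigma\|$, and $\|K'_\Sigma\|\ge r_\ess$. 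Consequently $\lambda\notin\spec^0_\ess(K'_\Sigma,\mathcal{B})$ whenever $|\lambda|>r_\ess$; taking $\lambda=\frac{\mu+1}{2(\mu-1)}$ gives~\eqref{aaa}. This is, in fact, the inclusion $\spec^0_\ess(T,\mathcal{B})\subset\{\lambda\in\CC:\ |\lambda|\le r_\ess(T,\mathcal{B})\}$ already recorded just above the statement of Theorem~\ref{corself}.

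Since the entire argument is an assembly of results established earlier in the paper, I do not anticipate any genuine obstacle. The only point requiring a moment of care is the bookkeeping of Sobolev exponents: Theorem~\ref{thm22} is stated on $H^{s-1}(\Sigma)$ for $s\in[\tfrac12,1]$, whereas Theorem~\ref{corself} and the present corollary operate on $H^{s-\frac{3}{2}}(\Sigma)$ for $s\in[1,\tfrac32]$. This half-unit shift has already been absorbed inside the proof of Theorem~\ref{corself}, so nothing new is needed here.
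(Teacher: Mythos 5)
Your proposal is correct and follows essentially the same route as the paper: the corollary is obtained by combining Theorem~\ref{corself} (hypothesis (a) being exactly \eqref{aaa}, hypothesis (b) supplied by Proposition~\ref{propss}, which is why only $s\in\{1,\frac{3}{2}\}$ is claimed) with the index-constancy observation recorded just before Theorem~\ref{corself}, which gives $\spec^0_\ess\big(K'_\Sigma,H^{s-\frac{3}{2}}(\Sigma)\big)\subset\{\lambda:|\lambda|\le r_\ess\}$ and hence the sufficiency of the spectral-radius inequality.
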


We now establish several concrete situations in which the assumption of Corollary \ref{corself2}
is satisfied.  First remark that
\begin{equation}
	\label{rks}
	 r\Big(K'_\Sigma, L^2(\Sigma)\Big)\le\frac{1}{2}\quad\text{ and }\quad r\Big(K'_\Sigma, H^{-\frac{1}{2}}(\Sigma)\Big)\le\frac{1}{2},
\end{equation}	
see \cite[Theorem 2]{EFV} and \cite[Theorem 2]{CL}.

\begin{theorem}[{\bf Sign-definite case}]\label{thm17}
For any $\mu>0$ the operator $A_{(\frac{3}{2})}$ is self-adjoint  (without any additional assumption on $\Sigma$)
and, in addition,  $A_{(\frac{3}{2})}=A_{(1)}$.
\end{theorem}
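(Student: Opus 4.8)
The plan is to obtain the self-adjointness of $A_{(\frac{3}{2})}$ directly from Corollary \ref{corself2} applied with $s=\frac{3}{2}$ (so that $H^{s-\frac{3}{2}}(\Sigma)=L^2(\Sigma)$), and then to derive the identity $A_{(\frac{3}{2})}=A_{(1)}$ by an abstract extension argument that exploits the fact that $A_{(\frac{3}{2})}$ is self-adjoint while $A_{(1)}$ is only known to be symmetric.

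First I would dispose of the trivial case $\mu=1$: as already noted after the definition of $A_{(s)}$, $A_{(\frac{3}{2})}$ then coincides with the Dirichlet Laplacian on $\Omega$, which is self-adjoint, and the same check shows $A_{(1)}$ equals it as well. For $\mu\in(0,\infty)\setminus\{1\}$ I would invoke the final assertion of Corollary \ref{corself2}: it suffices to verify
\[
\Big|\frac{\mu+1}{2(\mu-1)}\Big|>r_\ess\big(K'_\Sigma, L^2(\Sigma)\big).
\]
By the bound \eqref{rks} one has $r_\ess(K'_\Sigma,L^2(\Sigma))\le r(K'_\Sigma,L^2(\Sigma))\le\frac{1}{2}$, so it remains to check the elementary inequality $\big|\frac{\mu+1}{2(\mu-1)}\big|>\frac{1}{2}$ for every $\mu>0$, $\mu\ne 1$. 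Distinguishing the cases $\mu>1$ and $0<\mu<1$, this reduces in both to $\mu+1>|\mu-1|$, which holds for all $\mu>0$. Hence \eqref{aaa} is satisfied and $A_{(\frac{3}{2})}$ is self-adjoint with compact resolvent.

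For the equality $A_{(\frac{3}{2})}=A_{(1)}$, I would argue as follows. Since $H^{\frac{3}{2}}_\Delta(\Omega_\pm)\subset H^1_\Delta(\Omega_\pm)$ and the boundary and transmission conditions defining $\dom A_{(s)}$ do not depend on $s$, one has $\dom A_{(\frac{3}{2})}\subset\dom A_{(1)}$, and the two operators act by the same formula on this common domain; thus $A_{(\frac{3}{2})}\subset A_{(1)}$. The operator $A_{(1)}$ is symmetric by the Green formula (Proposition \ref{prop12}), i.e. $A_{(1)}\subset A_{(1)}^*$; and the inclusion $A_{(\frac{3}{2})}\subset A_{(1)}$ gives $A_{(1)}^*\subset A_{(\frac{3}{2})}^*=A_{(\frac{3}{2})}$, using that $A_{(\frac{3}{2})}$ is self-adjoint. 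Chaining these, $A_{(1)}\subset A_{(1)}^*\subset A_{(\frac{3}{2})}\subset A_{(1)}$, whence $A_{(1)}=A_{(\frac{3}{2})}$.

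I do not anticipate a genuine obstacle here: once Corollary \ref{corself2} and the bound \eqref{rks} are granted, the whole argument amounts to a short numerical verification together with the standard fact that a symmetric extension of a self-adjoint operator coincides with it. The only point demanding a moment's care is the sign bookkeeping in the inequality $\big|\frac{\mu+1}{2(\mu-1)}\big|>\frac{1}{2}$ across the two ranges $\mu>1$ and $0<\mu<1$.
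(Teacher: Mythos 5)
Your argument is correct, and its first half coincides with the paper's: for $\mu>0$, $\mu\ne1$, the elementary inequality $\bigl|\frac{\mu+1}{2(\mu-1)}\bigr|>\frac{1}{2}$ combined with \eqref{rks} gives the self-adjointness of $A_{(\frac{3}{2})}$ via Corollary \ref{corself2} (equivalently Theorem \ref{corself} with $s=\frac{3}{2}$), exactly as in the paper. Where you diverge is the identity $A_{(\frac{3}{2})}=A_{(1)}$: the paper verifies hypothesis (a) of Theorem \ref{corself} for \emph{both} $s=\frac{3}{2}$ and $s=1$, using both bounds in \eqref{rks} (in particular the $H^{-\frac{1}{2}}(\Sigma)$ bound of Chang--Lee) together with Proposition \ref{propss}, concludes that $A_{(1)}$ is itself self-adjoint, and then invokes maximality of self-adjoint operators on the inclusion $A_{(\frac{3}{2})}\subset A_{(1)}$. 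You instead prove only the self-adjointness of $A_{(\frac{3}{2})}$ and deduce the equality abstractly: $A_{(1)}$ is a symmetric extension of the self-adjoint operator $A_{(\frac{3}{2})}$, so the chain $A_{(1)}\subset A_{(1)}^{*}\subset A_{(\frac{3}{2})}^{*}=A_{(\frac{3}{2})}\subset A_{(1)}$ forces equality. Your route is slightly more economical, as it avoids the second bound in \eqref{rks} and the $n=2$, $s=1$ Fredholmness of $S_\Sigma$ from Proposition \ref{propss}; the paper's route has the small side benefit of exhibiting $A_{(1)}$ as self-adjoint by the same mechanism that is reused for other values of $\mu$ and $s$ later in the text. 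Both arguments rest on the same inclusion $\dom A_{(\frac{3}{2})}\subset\dom A_{(1)}$ and on the symmetry of $A_{(1)}$ established after its definition, so there is no gap in your version; your handling of the trivial case $\mu=1$ (where both operators reduce to the Dirichlet Laplacian, whose domain lies in $H^{\frac{3}{2}}_\Delta(\Omega)$) also matches the paper's remark.
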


\begin{proof}
Let $\mu>0$ with $\mu\ne 1$, then due to \eqref{rks} we have
\[
\Big|\frac{\mu+1}{2(\mu-1)}\Big|>\frac{1}{2},
\]
and (a) in Theorem \ref{corself} is satisfied for both $s=1$ and $s=\frac{3}{2}$ due to \eqref{rks}.
In view of Proposition \ref{propss} this shows that both $A_{(1)}$ and $A_{(\frac{3}{2})}$ are self-adjoint.
Due to $A_{(\frac{3}{2})}\subset A_{(1)}$ the two operators must coincide (as self-adjoint operators are always maximal).
The remaining case $\mu=1$ is trivial.
\end{proof}

Recall \cite{HMT} that $\mathrm{VMO}(\Sigma)$ stands for the space of functions of vanishing means oscillation on $\Sigma$. The bounded Lipschitz domains with normals in $\mathrm{VMO}$ are those domains ''without corners''.
This includes all bounded $C^1$-smooth domains.

\begin{theorem}[{\bf Domains without corners}]\label{thm18}
Let $\Sigma$ be such that $\nu\in \mathrm{VMO}(\Sigma)$, which is satisfied, in particular for $C^1$-smooth $\Sigma$.
Then the operator $A_{(\frac{3}{2})}$ is self-adjoint for any $\mu\in\RR\setminus\{-1,0\}$.
\end{theorem}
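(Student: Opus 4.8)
The plan is to derive the statement from Corollary~\ref{corself2} applied with $s=\frac{3}{2}$. For this value one has $H^{s-\frac{3}{2}}(\Sigma)=L^2(\Sigma)$, so the task reduces to checking that
\[
\frac{\mu+1}{2(\mu-1)}\notin\spec^0_\ess\big(K'_\Sigma,L^2(\Sigma)\big)
\]
for every $\mu\in\RR\setminus\{-1,0,1\}$; the excluded value $\mu=1$ is trivial, since then $A_{(\frac{3}{2})}$ is the Dirichlet Laplacian on $\Omega$, which is self-adjoint with compact resolvent.

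The decisive input is that $\nu\in\mathrm{VMO}(\Sigma)$ forces the Neumann--Poincar\'e operator $K'_\Sigma$ to be a \emph{compact} operator on $L^2(\Sigma)$. For $C^1$-smooth $\Sigma$ this is classical: the kernel $\langle\nu(x),x-y\rangle\,|x-y|^{-n}$ is then $o(|x-y|^{-(n-1)})$ along the diagonal, so the associated singular integral operator is in fact of weakly singular type, see \cite{fabes}; for a general Lipschitz $\Sigma$ whose unit normal lies in $\mathrm{VMO}(\Sigma)$ the compactness of $K'_\Sigma$ on $L^2(\Sigma)$ is part of the Calder\'on--Zygmund theory on such domains, see \cite{HMT}. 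Granting this, for every $\lambda\ne 0$ the operator $K'_\Sigma-\lambda$ is a compact perturbation of $-\lambda\,\mathrm{Id}$, hence Fredholm of index zero by the Fredholm alternative, whereas $K'_\Sigma$ itself cannot be Fredholm, a compact operator on the infinite-dimensional space $L^2(\Sigma)$ having no closed range of finite codimension. Therefore $\spec^0_\ess\big(K'_\Sigma,L^2(\Sigma)\big)=\{0\}$.

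To conclude, observe that for $\mu\in\RR\setminus\{-1,0,1\}$ one has $\frac{\mu+1}{2(\mu-1)}\ne 0$, so the displayed non-membership condition holds, and Corollary~\ref{corself2} gives that $A_{(\frac{3}{2})}$ is self-adjoint (and, moreover, has compact resolvent). Together with the trivial case $\mu=1$, this covers all $\mu\in\RR\setminus\{-1,0\}$.

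I expect the only genuine difficulty to be the compactness of $K'_\Sigma$ on $L^2(\Sigma)$ under the mere assumption $\nu\in\mathrm{VMO}(\Sigma)$: this is the analytic manifestation of ``$\Sigma$ having no corners'', and it is precisely what makes the whole half-line $\mu\ne -1$ accessible, in contrast to the corner cases treated later where only a proper subset of contrast values works. The rest --- the reduction through the operator $\Theta-M_z$ of Section~\ref{sec3} and the accompanying Fredholm bookkeeping --- is routine once that compactness is available.
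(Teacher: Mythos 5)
Your proposal is correct and follows essentially the same route as the paper: the whole argument rests on the compactness of $K'_\Sigma$ on $L^2(\Sigma)$ when $\nu\in\mathrm{VMO}(\Sigma)$ (the paper cites \cite[Theorem 4.47]{HMT} for exactly this), which gives $\spec^0_\ess\big(K'_\Sigma,L^2(\Sigma)\big)=\{0\}$, and then Corollary~\ref{corself2} with $s=\tfrac{3}{2}$ applies since $\frac{\mu+1}{2(\mu-1)}\ne 0$ for $\mu\ne -1$, with $\mu=1$ handled trivially.
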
	

\begin{proof}
The operator $K'_{\Sigma}$ is compact on $L^2(\Sigma)$ by \cite[Theorem 4.47]{HMT}, so $\spec^0_\ess\big(K'_\Sigma, L^2(\Sigma)\big)=\{0\}$ and $r_\ess\big(K'_\Sigma, L^2(\Sigma)\big)=0$, and for $\mu\notin\{-1,0,1\}$ the claim follows by Corollary \ref{corself2}. The case $\mu=1$ is trivially covered.
\end{proof}

\begin{remark}[{\bf Bounds for spectral radii}]\label{rmk19}
So far we have no self-adjointness condition for the interesting case $\mu<0$ and $\Sigma$ with corners. 
In fact, if for a given $\Sigma$ one can improve \eqref{rks} to 
\[
r(s):=r_\ess\big(K'_\Sigma, H^{s-\frac{3}{2}}(\Sigma)\big)<\frac{1}{2},
\]
for some $s\in[1,\frac{3}{2}]$, then for $\mu\notin\{0,1\}$ there holds
\[
\Big|\frac{\mu+1}{2(\mu-1)}\Big|>r(s)
\text{ if and only if }
\mu\notin I_{r(s)}:=\Bigg[-\frac{1+2r(s)}{1-2r(s)}, -\frac{1-2r(s)}{1+2r(s)}\Bigg],
\]
and the key assumption (a) in Theorem \ref{corself} is satisfied for all $\mu$ outside the ``critical interval'' $I_{r(s)}$ (in particular, a range of negative values is admitted).
At the same time, the inequalities
\[
r\big(K'_\Sigma, L^2(\Sigma)\big)<\frac{1}{2},\quad r_\ess\big(K'_\Sigma, L^2(\Sigma)\big)<\frac{1}{2}
\]
represent central conjectures in the theory of Neumann-Poincar\'e operators, which are still unsolved in the general form. The papers \cite{hag,wendland} contain a rather complete bibliography and a review of available results on the spectral radii. Let us discuss an important particular case in greater detail.
\end{remark}

\begin{theorem}\label{thm20}
Let $n=2$ and $\Sigma$ be a curvilinear polygon with $C^1$-smooth edges and  with $N$ interior angles $\omega_1,\dots,\omega_N\in(0,2\pi)\setminus\{0\}$.
Let $\omega\in(0,\pi)$ be the sharpest corner, i.e.
\begin{align*}
    \dfrac{|\pi-\omega|}{2}&=\max_k \dfrac{|\pi-\omega_k|}{2},
\end{align*}
then the following hold:
\begin{itemize}
    \item[(i)] The operator $A_{(\frac{3}{2})}$ is self-adjoint with compact resolvent for all $\mu\ne0$ such that
\begin{equation*}
\mu\notin I_{r(\frac{3}{2})}=\Big[-\dfrac{1}{a(\omega)},-a(\omega)\Big]
\text{ for } a(\omega):=\tan^2\frac{\omega}{4}\equiv\dfrac{1-\cos\frac{\omega}{2}}{1+\cos\frac{\omega}{2}}.
\end{equation*}
\item[(ii)] If in addition the edges are $C^2$-smooth, then the operator $A_{(1)}$ is self-adjoint with compact resolvent for all $\mu\ne0$ such that
\begin{equation*}
\mu\notin I_{r(1)}=\big[-\dfrac{1}{b(\omega)}, -b(\omega)\big]
\text{ for }
b(\omega):=\dfrac{\pi-|\pi-\omega|}{\pi +|\pi-\omega|}.
\end{equation*}
\end{itemize}
\end{theorem}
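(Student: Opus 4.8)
The plan is to deduce both statements from Corollary~\ref{corself2} via the bookkeeping of Remark~\ref{rmk19}. By that discussion, as soon as one has a bound $r_\ess\big(K'_\Sigma,H^{s-\frac32}(\Sigma)\big)\le\rho$ with $\rho<\tfrac12$, the operator $A_{(s)}$ is self-adjoint with compact resolvent for every $\mu\in\RR\setminus\{0,1\}$ lying outside the interval $I_\rho=\big[-\tfrac{1+2\rho}{1-2\rho},-\tfrac{1-2\rho}{1+2\rho}\big]$, while $\mu=1$ is the trivial case of the Dirichlet Laplacian on $\Omega$. So for (i) I would prove
\[
r_\ess\big(K'_\Sigma,L^2(\Sigma)\big)\le\tfrac12\cos\tfrac\omega2 ,
\]
which, since $a(\omega)=\tan^2\tfrac\omega4=\tfrac{1-\cos(\omega/2)}{1+\cos(\omega/2)}$, yields precisely $I_{\frac12\cos(\omega/2)}=[-1/a(\omega),-a(\omega)]$; and for (ii) I would prove
\[
r_\ess\big(K'_\Sigma,H^{-\frac12}(\Sigma)\big)\le\tfrac{|\pi-\omega|}{2\pi} ,
\]
which, since $b(\omega)=\tfrac{\pi-|\pi-\omega|}{\pi+|\pi-\omega|}$, yields $I_{|\pi-\omega|/(2\pi)}=[-1/b(\omega),-b(\omega)]$.

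Everything thus reduces to the description of the essential spectrum of the Neumann--Poincar\'e operator on a curvilinear polygon, in the spaces $L^2(\Sigma)$ and $H^{-\frac12}(\Sigma)$. I would use localisation at the corners: away from the finitely many corners the boundary is $C^1$ (resp.\ $C^2$), so the restriction of $K'_\Sigma$ there is compact on $L^2(\Sigma)$ (resp.\ on $H^{-\frac12}(\Sigma)$), whence $\spec_\ess(K'_\Sigma,\cdot)$ is contained in the union over the corners of the essential spectra of the model operators on the boundary of the infinite plane wedges with the same opening angles. For a wedge of interior angle $\alpha$, write $\beta:=|\pi-\alpha|\in(0,\pi)$. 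In arc-length coordinates the $L^2$-model operator exchanges the densities on the two half-lines, and after the substitution $s=e^\xi$, $a=e^\eta$ it is unitarily equivalent to an operator built from a single Mellin convolution on $L^2(\RR)$ whose Fourier symbol, obtained from the identity $\int_\RR\tfrac{e^{cu}}{\cosh u-\cos\alpha}\,\dd u=\tfrac{2\pi\sin(c(\pi-\alpha))}{\sin\alpha\,\sin(\pi c)}$ (valid for $|\Re c|<1$), equals $\widehat\kappa(\tau)=\sin\bigl((\pi-\alpha)(\tfrac12+i\tau)\bigr)/(2\cosh\pi\tau)$; hence the model operator has essential spectral radius $\sup_\tau|\widehat\kappa(\tau)|$, where
\[
|\widehat\kappa(\tau)|^2=\frac{\sin^2\tfrac\beta2+\sinh^2(\beta\tau)}{4\cosh^2(\pi\tau)} .
\]
Since $\tau\mapsto\sinh(\beta\tau)/\sinh(\pi\tau)$ is even and decreasing on $(0,\infty)$ with $\tau\to0$ limit $\beta/\pi$, and since $\beta/\pi<\sin(\beta/2)$ on $(0,\pi)$ (the difference is concave and vanishes at the endpoints), this supremum is attained at $\tau=0$ and equals $\tfrac12\sin\tfrac\beta2$. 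For the energy space $H^{-\frac12}(\Sigma)$, where $K'_\Sigma$ is --- after a harmless rescaling making $S_\Sigma$ positive --- self-adjoint in the inner product $\langle S_\Sigma\,\cdot\,,\cdot\rangle$ and therefore has real spectrum, the model operator at the wedge has spectrum the interval $\bigl[-\tfrac\beta{2\pi},\tfrac\beta{2\pi}\bigr]$, which is the classical description of the energy-space spectrum at a corner. I would cite the existing literature (see, e.g., \cite{hag,wendland,kmp1} and the references therein) for these two model computations rather than carry them out in full.

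Both $\tfrac12\sin\tfrac\beta2$ and $\tfrac\beta{2\pi}$ are strictly increasing in $\beta\in(0,\pi)$, so on taking the union over the corners the essential spectral radius is governed by the corner with the largest $\beta$ --- the sharpest corner. With $\omega\in(0,\pi)$ as in the statement, so that $\max_k|\pi-\omega_k|=|\pi-\omega|=\pi-\omega$ and $\sin\tfrac{\pi-\omega}2=\cos\tfrac\omega2$, I obtain $r_\ess\big(K'_\Sigma,L^2(\Sigma)\big)=\tfrac12\cos\tfrac\omega2$ and $r_\ess\big(K'_\Sigma,H^{-\frac12}(\Sigma)\big)=\tfrac{|\pi-\omega|}{2\pi}$, both $<\tfrac12$ because $\omega>0$; the two assertions then follow from Corollary~\ref{corself2} as explained in the first paragraph.

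The step I expect to be the real obstacle is the localisation itself: one must justify carefully that, away from the corners, $K'_\Sigma$ agrees with a compact operator in the respective space --- this is exactly where the $C^1$-regularity of the edges enters for $L^2(\Sigma)$, and the stronger $C^2$-regularity for $H^{-\frac12}(\Sigma)$ --- and that passing from a straight wedge to a curved one with the same opening angle changes the model operator only by a compact operator, so that $\spec_\ess(K'_\Sigma,\cdot)$ is indeed the union of the model contributions. A secondary but routine point is that in the $H^{-\frac12}$ setting one needs $S_\Sigma$ to induce a norm equivalent to that of $H^{-\frac12}(\Sigma)$, which underlies the energy-space self-adjointness and hence the reality of the model spectrum.
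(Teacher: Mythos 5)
Your proposal is correct and follows essentially the same route as the paper: reduce via Corollary~\ref{corself2} and the bookkeeping of Remark~\ref{rmk19} to bounds on $r_\ess\big(K'_\Sigma,L^2(\Sigma)\big)$ and $r_\ess\big(K'_\Sigma,H^{-\frac12}(\Sigma)\big)$, which give exactly $\tfrac12\cos\tfrac{\omega}{2}$ and $\tfrac{|\pi-\omega|}{2\pi}$ and hence the intervals $I_{r(\frac32)}$ and $I_{r(1)}$. The localisation-plus-Mellin-symbol analysis you sketch (and defer to the literature) is precisely what is contained in the results the paper cites directly, namely Shelepov~\cite{VYS} for the $L^2$ radius and Perfekt--Putinar~\cite{PP} for the $H^{-\frac12}$ radius, so the step you flag as the main obstacle is already settled there.
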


\begin{proof} (i) By	\cite[Theorem 6]{VYS} there holds
\[
	r_\ess\big(K'_\Sigma, L^2(\Sigma)\big)=\dfrac{1}{2}\max_k \sin\dfrac{|\pi-\omega_k|}{2}\equiv \dfrac{1}{2}\sin\dfrac{|\pi-\omega|}{2},
\]
and the rest follows from the computations in Remark \ref{rmk19}
with the help of elementary trigonometric identities. Similarly, from \cite[Theorem 7]{PP} one has 
\[
	r_\ess\big(K'_\Sigma, H^{-\frac{1}{2}}(\Sigma)\big)= \frac{1}{2\pi }\max_k| \pi-\omega_k|\equiv \frac{1}{2\pi}|\pi-\omega|,
\]
and thus assertion (ii) is a direct consequence of Remark \ref{rmk19}.
\end{proof}

\begin{remark}\label{rmk21}
The case of curvilinear polygons was already considered (with a slightly requirements on the regularity of the edges)
in \cite{bbdr,bccj,DT} for $s=1$. A numerical comparison of the bounds $a(\omega)$ and $b(\omega)$ is shown in Figure \ref{fig2}.
\end{remark}

\begin{figure}
\centering
\includegraphics[height=30mm]{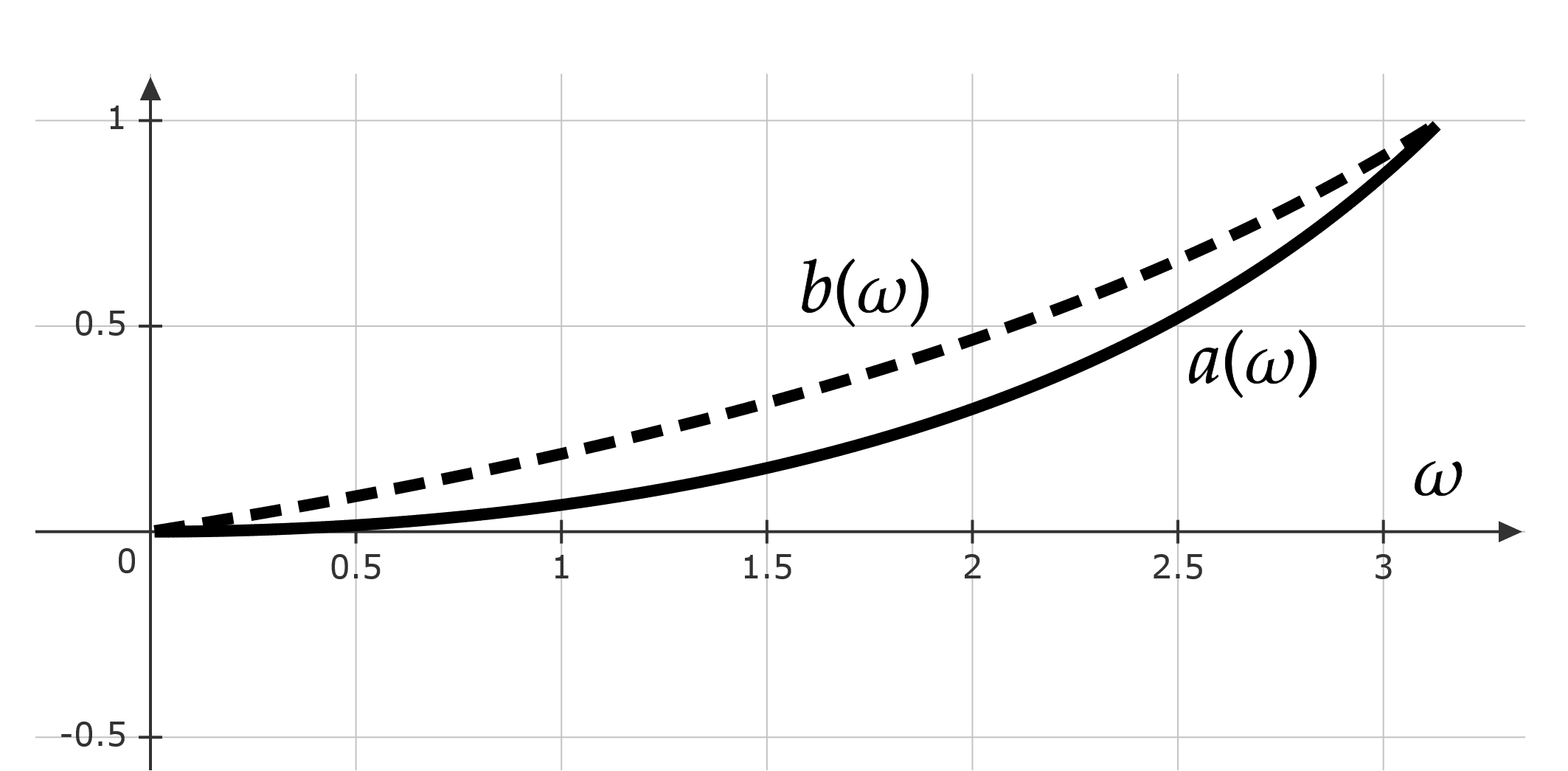}
	\caption{Plots of $\omega\mapsto a(\omega)$ (solid line) and $\omega\mapsto b(\omega)$ (dashed line).}\label{fig2}
\end{figure}

\begin{remark}
We are not aware of efficient estimates for $r_\ess\big(K'_\Sigma, L^2(\Sigma)\big)$ and $r_\ess\big(K'_\Sigma, H^{-\frac{1}{2}}(\Sigma)\big)$ if $n\ge 3$ and $\Sigma$ has corners. In \cite[Section 4]{kmp1} it was shown that if
$\Sigma$ is an arbitrary three-dimensional polyhedron, then $r_\ess(K'_\Sigma, L^2(\Sigma))<\frac{1}{2}$, but no constructive upper bound was available. In \cite[Theorem 5.5]{km2} one finds an expression for the $r_\ess(K'_\Sigma, L^2(\Sigma))$ for the case of rotationally invariant conical singularities in three dimensions in terms of modified Legendre functions, but the expressions obtained did not contain an explicit bound of $r_\ess(K'_\Sigma, L^2(\Sigma))$. Nevertheless,
some useful information on the conical points can be extracted as follows.	
\end{remark}

\begin{theorem}[{\bf Conical points}]\label{thmcon} Let $n=3$ and assume that $\Sigma\subset\RR^3$ is obtained by revolution of a $C^5$-smooth
	curve $\gamma$ and that $\Sigma$ is $C^1$-smooth except for one conical singularity that forms an angle $\alpha\in(0,\pi)\setminus\{\frac{\pi}{2}\}$ with the rotational axis. Then, the following hold:
	\begin{itemize}
		\item If $0<\alpha<\frac{\pi}{2}$ then $A_{(1)}$ is self-adjoint with compact resolvent for any $\mu>-1$.
		\item If $\frac{\pi}{2}<\alpha<\pi$ then $A_{(1)}$ is self-adjoint with compact resolvent for any $\mu<-1$.
	\end{itemize}
\end{theorem}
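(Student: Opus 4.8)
The plan is to deduce the statement from Corollary \ref{corself2} with $s=1$, which requires controlling $\spec^0_\ess\big(K'_\Sigma,H^{-\frac12}(\Sigma)\big)$. Since $n=3$, the single layer operator $S_\Sigma:H^{-\frac12}(\Sigma)\to H^{\frac12}(\Sigma)$ is a positive isomorphism: it is an isomorphism by Proposition \ref{propss}(i) together with its injectivity, and positivity follows from $\langle S_\Sigma f,f\rangle=\int_{\RR^3}|\nabla\cS f|^2\ge 0$. Hence $(f,g)\mapsto\langle S_\Sigma f,g\rangle$ is an inner product on $H^{-\frac12}(\Sigma)$ equivalent to the standard one, and the symmetrization identity \eqref{skk} together with $S_\Sigma=S_\Sigma^*$ and $K_\Sigma=(K'_\Sigma)^*$ on $L^2(\Sigma)$ shows that $K'_\Sigma$ is self-adjoint for this inner product. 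In particular $\spec^0_\ess\big(K'_\Sigma,H^{-\frac12}(\Sigma)\big)=\spec_\ess\big(K'_\Sigma,H^{-\frac12}(\Sigma)\big)$, and by \eqref{rks} this set lies in $[-\frac12,\frac12]$. A short computation with the M\"obius map $\mu\mapsto\frac{\mu+1}{2(\mu-1)}$ shows that, over $\mu\in(-1,\infty)\setminus\{0,1\}$, the value $\frac{\mu+1}{2(\mu-1)}$ meets $[-\frac12,\frac12]$ exactly in $(-\frac12,0)$, while over $\mu\in(-\infty,-1)$ it sweeps out $(0,\frac12)$. By Corollary \ref{corself2}, the theorem thus reduces to showing that $\spec_\ess\big(K'_\Sigma,H^{-\frac12}(\Sigma)\big)$ does not meet $(-\frac12,0)$ when $\alpha\in(0,\frac\pi2)$ and does not meet $(0,\frac12)$ when $\alpha\in(\frac\pi2,\pi)$; in fact we shall see that this essential spectrum is contained in $[0,\frac12]$ in the first case and in $[-\frac12,0]$ in the second.

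Next I would localize the essential spectrum at the conical point. Away from the apex $\Sigma$ is $C^1$ with normal in $\mathrm{VMO}$, so there $K'_\Sigma$ acts locally like a compact operator on $H^{-\frac12}$ and contributes nothing; the $C^5$-regularity of the generating curve $\gamma$ makes this localization rigorous in the surface-of-revolution framework of \cite{km2,kmp1}, and it identifies $\spec_\ess\big(K'_\Sigma,H^{-\frac12}(\Sigma)\big)$ with the spectrum of the scale-invariant Neumann--Poincar\'e operator $K'_{C_\alpha}$ of the model infinite cone $C_\alpha$ (its apex being approached from the side occupied by $\Omega_-$) on the Mellin--Sobolev space associated with $H^{-\frac12}$. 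Interchanging the interior and exterior of $C_\alpha$ reverses the orientation of the normal, hence replaces $K'_{C_\alpha}$ by $-K'_{C_\alpha}$, and at the same time turns $C_\alpha$ (seen from the opposite pole of the rotation axis) into $C_{\pi-\alpha}$; therefore $\spec_\ess(K'_{C_\alpha})=-\spec_\ess(K'_{C_{\pi-\alpha}})$, and the case $\alpha\in(\frac\pi2,\pi)$ follows from the case $\pi-\alpha\in(0,\frac\pi2)$. It remains to prove that $\spec_\ess(K'_{C_\alpha})$ is non-negative for $\alpha\in(0,\frac\pi2)$.

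This non-negativity is the heart of the matter, and I expect it to be the main obstacle. The plan is to diagonalize the model operator: a Fourier decomposition $e^{\mathrm{i}m\phi}$, $m\in\ZZ$, in the angle of revolution together with a Mellin transform in the distance $r$ to the apex reduces $K'_{C_\alpha}$ to multiplication by an explicit family of symbols $k_m(\cdot)$ expressible through (modified) Legendre functions, exactly as in \cite[Theorem 5.5]{km2}; passing from $L^2$ to $H^{-\frac12}$ only amounts to evaluating these symbols on the suitably shifted vertical line in the Mellin plane. One then has to verify the pointwise inequality $k_m\ge 0$ on that line for every $m\in\ZZ$ when $\alpha<\frac\pi2$ — equivalently, that $\frac12+K'_{C_\alpha}$ has symbol $\ge\frac12$ — a concrete positivity statement about ratios of Legendre functions in the angular variable. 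An alternative, perhaps more geometric route uses the Dirichlet-energy identities for the single layer potential $u=\cS f$: up to the (delicate) sign conventions, $\langle K'_\Sigma f,S_\Sigma f\rangle$ equals one half of $\int_{\Omega_+}|\nabla u|^2-\int_{\Omega_-}|\nabla u|^2$, so the claim turns into the assertion that, for densities $f$ concentrated at the apex, the Dirichlet energy of $u$ on the wide side $\Omega_+=\{\theta>\alpha\}$ dominates that on the narrow side $\Omega_-=\{\theta<\alpha\}$; since $\Omega_-$ is a convex cone when $\alpha<\frac\pi2$, one could try to obtain this by a reflection or extension argument across $\Sigma$. Either way, the genuinely technical point is the Legendre-function estimate (respectively, making the energy comparison precise for a cone rather than a half-space); once the sign is established, Corollary \ref{corself2} immediately delivers the self-adjointness of $A_{(1)}$ together with the compactness of its resolvent, the excluded value $\mu=1$ being trivial since $A_{(1)}$ then reduces to the Dirichlet Laplacian on $\Omega$.
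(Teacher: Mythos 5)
Your reduction to Corollary \ref{corself2} with $s=1$ and your analysis of the M\"obius map $\mu\mapsto\frac{\mu+1}{2(\mu-1)}$ are correct and are exactly the paper's framing: for $\mu>-1$ the relevant value avoids $[0,\tfrac12]$, for $\mu<-1$ it avoids $[-\tfrac12,0]$, so everything hinges on showing that $\spec^0_\ess\big(K'_\Sigma,H^{-\frac12}(\Sigma)\big)$ lies on the correct side of $0$ depending on whether $\alpha\lessgtr\frac{\pi}{2}$. But precisely there your argument stops being a proof: you yourself call the sign of the Mellin symbols (equivalently, the cone energy comparison) ``the heart of the matter'' and leave it as a plan. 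Neither the localization of the $H^{-\frac12}$ essential spectrum to the scale-invariant operator of the model cone, nor the symmetry $\spec_\ess(K'_{C_\alpha})=-\spec_\ess(K'_{C_{\pi-\alpha}})$, nor --- crucially --- the pointwise non-negativity of the Legendre-function symbols for $\alpha<\frac{\pi}{2}$ is actually established; these are nontrivial facts that do not follow from anything proved in the paper, so the proposal has a genuine gap at the decisive step.

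The paper closes this step by citation: by \cite[Lemma 5]{LPS} one has $\spec^0_\ess\big(K'_\Sigma,H^{-\frac12}(\Sigma)\big)=[0,C_\alpha]$ for $0<\alpha<\frac{\pi}{2}$ and $=[-C'_\alpha,0]$ for $\frac{\pi}{2}<\alpha<\pi$, with constants $C_\alpha,C'_\alpha>0$ given in terms of special functions, and Corollary \ref{corself2} then yields the theorem at once. In particular the $C^5$-smoothness of the generating curve in the hypothesis is there precisely to make that cited result applicable --- a role it plays nowhere in your sketch --- and the quantity you would still have to prove (non-negativity of all symbols $k_m$ on the relevant Mellin line, or the cone energy inequality) is exactly the content of that lemma, not something one can wave through via \cite{km2}, whose computation lives in $L^2$ rather than in the energy space. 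Your preliminary observations (coercivity of $S_\Sigma:H^{-\frac12}(\Sigma)\to H^{\frac12}(\Sigma)$ for $n=3$, symmetrizability of $K'_\Sigma$ via \eqref{skk}, hence reality of the spectrum and equality of $\spec^0_\ess$ with $\spec_\ess$, together with the bound \eqref{rks}) are correct but become unnecessary once the cited determination of the essential spectrum is used; as written, they do not substitute for it.
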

\begin{proof} From \cite[Lemma 5]{LPS} one has 
	\[
	\spec^0_\ess\big(K'_\Sigma, H^{-\frac{1}{2}}(\Sigma)\big)=\begin{cases} [0,C_\alpha], & 0<\alpha<\frac{\pi}{2},\\
		[-C'_\alpha,0], & \frac{\pi}{2}<\alpha<\pi
	\end{cases}
	\]
	with suitable constant $C_\alpha,C'_\alpha>0$ expressed in terms of special functions. Then the result is a direct consequence of Corollary \ref{corself2}. 
\end{proof}

\begin{remark} For each $s\in[1,\frac{3}{2}]$ we believe that a more general statement should be valid: if $K'_{\Sigma}-\frac{\mu+1}{2(\mu-1)}$ is Fredholm of index $m$ in $H^{s-\frac{3}{2}}(\Sigma)$, then $A_{(s)}$ is a closed symmetric operator with deficiency indices $(m,m)$. This should follow by a suitable extension of the resolvent formula, which would in turn require some non-evident and adapted extensions of the Dirichlet/Neumann traces. We prefer to leave this question for future work.    
\end{remark}

\section*{Acknowledgments}
The authors were in part supported by the Deutsche Forschungsgemeinschaft (DFG, German Research Foundation) -- 491606144.
The authors are thankful to Irina Mitrea and Daniel Grieser for useful bibliographical hints.

\end{document}